\newtheorem{theorem}{Theorem}[section]
\newtheorem{lemma}[theorem]{Lemma}
\newtheorem{proposition}[theorem]{Proposition}
\newtheorem{algorithm}[theorem]{Algorithm}
\theoremstyle{definition}
\newtheorem{definition}[theorem]{Definition}
\newtheorem{condition}[theorem]{Condition}
\newtheorem{modcondition}{Condition}
\theoremstyle{remark}
\newtheorem{remark}[theorem]{Remark}
\newtheorem{remarks}[theorem]{Remarks}
\newtheorem{notation}[theorem]{Notation}
\newtheorem{example}[theorem]{Example}
\numberwithin{equation}{section}
\newcommand{\eps}{\varepsilon}
\newcommand{\R}{\mathbb R}
\newcommand{\N}{\mathbb N}
\newcommand{\VV}{\mathscr{V}}
\newcommand{\UU}{\mathscr{U}}
\newcommand{\PP}{\mathscr{T}}
\newcommand{\HH}{\mathscr{H}}
\newcommand{\KK}{\mathscr{K}}
\newcommand{\dom}{\mathrm{dom}}
\newcommand{\cF}{\mathcal F}
\newcommand{\cA}{\mathcal A}
\newcommand{\cP}{\mathcal P}
\newcommand{\cO}{\mathcal O}
\newcommand{\cS}{\mathcal S}
\newcommand{\cL}{\mathcal L}
\newcommand{\Lis}{\cL\mathrm{is}}
\newcommand{\tria}{\mathcal T}
\DeclareMathOperator{\ran}{ran}
\DeclareMathOperator{\meas}{meas}
\DeclareMathOperator*{\argmin}{argmin}
\DeclareMathOperator{\supp}{supp}
\DeclareMathOperator{\dist}{dist}
\DeclareMathOperator{\Span}{span}
\DeclareMathOperator{\divv}{div}
\DeclareMathOperator{\curl}{curl}
\DeclareMathOperator{\diam}{diam}
\newcommand{\grad}{\nabla}
\title[Adaptive wavelet method for first order system least squares]{An optimal adaptive wavelet method for first order system least squares}
\thanks{}
\date{\today}
\author{Nikolaos Rekatsinas and Rob Stevenson}
\address{
Korteweg-de Vries Institute for Mathematics,
University of Amsterdam,
P.O. Box 94248,
1090 GE Amsterdam, The Netherlands}
\email{n.rekatsinas@uva.nl, r.p.stevenson@uva.nl}
\subjclass[2010]{
41A25, 
42C40, 
47J25, 
65J15, 
65N12, 
65T60, 
65N30, 
76D05
}
\keywords{First order system least squares, adaptive wavelet solver, optimal rates, linear complexity}
\begin{document}
\begin{abstract} In this paper, it is shown that {\em any} well-posed 2nd order PDE can be reformulated as a well-posed first order least squares system.
This system will be solved by an adaptive wavelet solver in optimal computational complexity.
The applications that are considered are second order elliptic PDEs with general inhomogeneous boundary conditions, and the stationary Navier-Stokes equations.
\end{abstract}

\maketitle
\section{Introduction}
In this paper, a wavelet method is constructed for the optimal adaptive solution of {\em stationary} PDEs.
We develop a general procedure to write {\em any} well-posed 2nd order PDE as a well-posed first order least squares system.
The (natural)  least squares formulations contain dual norms, that, however, impose no difficulties for a wavelet solver.
The advantages of the first order least squares system formulation are two-fold.

Firstly, regardless of the original problem, the least squares problem is symmetric and positive definite, which opens the possibility to develop an optimal adaptive solver.
The obvious use of the least-squares functional as an a posteriori error estimator, however, is not known to yield a convergent method (see, however, \cite{37.3} for an alternative for Poisson's problem).
As we will see, the use of the (approximate) residual in wavelet coordinates as an a posteriori error estimator does give rise to an optimal adaptive solver.

Secondly, as we will discuss in more detail in the following subsections, the optimal application of a wavelet solver to a first order system reformulation allows for a simpler, and quantitatively more efficient 
approximate residual evaluation than with the standard formulation of second order. Moreover, it applies equally well to semi-linear equations, as e.g. the stationary Navier-Stokes equations, and it applies to wavelets that have only {\em one} vanishing moment.

The approach to apply the wavelet solver to a well-posed first order least squares system 
reformulation
also applies  to {\em time-dependent} PDEs in simultaneous space-time variational formulations,
as parabolic problems or instationary Navier-Stokes equations. 
With those problems, the wavelet bases consist of tensor products of temporal and spatial wavelets. 
Consequently, they require a different procedure for the approximate evaluation of the residual in wavelet coordinates, which will be the topic of a forthcoming work.

\subsection{Adaptive wavelet schemes, and the approximate residual evaluation}
Adaptive wavelet schemes can solve well-posed linear and nonlinear operator equations at the best possible rate allowed by the basis in linear complexity (\cite{45.2,45.25,316,45.26,298,249.92,249.94}). 
Schemes with those properties will be called  {\em optimal}.
The schemes can be applied to PDEs, which we study in this work, as well as to integral equations (\cite{58.1}).

There are two kinds of adaptive wavelet schemes. 
One approach is to apply some convergent iterative method to the infinite system in wavelet coordinates, with decreasing tolerances for the inexact evaluations of residuals (\cite{45.25,45.26}).
These schemes rely on the application of coarsening to achieve optimal rates.

The other approach is to solve a sequence of Galerkin approximations from spans of nested sets of wavelets. The (approximate) residual in wavelet coordinates of the current approximation is used as an a posteriori error estimator to make an optimal selection of the wavelets to be added to form the next set (\cite{45.2}).
With this scheme, that is studied in the current work, the application of coarsening can be avoided (\cite{316,75.36}), and it turns out to be quantitatively more efficient.
This approach is restricted to PDOs whose Fr\'{e}chet derivatives are symmetric and positive definite (compact perturbations can be added though, see \cite{75.365}).
\medskip

A key computational ingredient of both schemes is the approximate evaluation of the residual in wavelet coordinates.
Let us discuss this for a linear operator equation $A u =f$, with, for some separable Hilbert spaces $\HH$ and $\KK$, for convenience over $\R$, $f \in \KK'$  and $A \in \Lis(\HH,\KK')$ (i.e., $A \in \cL(\HH,\KK')$ and $A^{-1} \in \cL(\KK',\HH)$).

Equipping $\HH$ and $\KK$ with {\em Riesz bases} $\Psi^\HH$, $\Psi^\KK$, formally viewed as column vectors, $A u =f$ can be equivalently written as a bi-infinite system of coupled scalar equations ${\bf A} {\bf u}={\bf f}$, where ${\bf f}=f(\Psi^\KK)$ is the infinite `load vector', ${\bf A}=(A \Psi^\HH)(\Psi^\KK)$ is the infinite `stiffness' or system matrix, and $u={\bf u}^\top \Psi^\HH$.

Here we made use of following notations:
\begin{notation}
For countable collections of functions $\Sigma$ and $\Upsilon$, we write $g(\Sigma)=[g(\sigma)]_{\sigma \in \Sigma}$,
$M(\Sigma)(\Upsilon)=[M(\sigma)(\upsilon)]_{\upsilon \in \Upsilon, \sigma \in \Sigma}$, and $\langle \Upsilon, \Sigma \rangle = [\langle \upsilon, \sigma \rangle]_{\upsilon \in \Upsilon, \sigma \in \Sigma}$, assuming $g$, $M$, and $\langle\,,\,\rangle$ are such that the expressions at the right-hand sides are well-defined.

The space of square summable vectors of reals indexed over a countable index set $\vee$ will be denoted as $\ell_2(\vee)$ or simply as $\ell_2$. The norm on this space will be simply denoted as $\|\cdot\|$.
\end{notation}

As a consequence of $A \in \Lis(\HH,\KK')$, we have that ${\bf A} \in \Lis(\ell_2,\ell_2)$. For the moment, let us additionally assume that ${\bf A}$ is symmetric and positive definite, as when $\KK=\HH$, $(Au)(v)= (Av)(u)$ and $(Au)(u) \gtrsim \|u\|_\HH^2$ ($u,v \in \HH$). If this is not the case, then the following can be applied to the normal equations ${\bf A}^\top {\bf A} {\bf u}={\bf A}^\top {\bf f}$.

For the finitely supported approximations ${\bf \tilde{u}}$ to ${\bf u}$ that are generated  inside the adaptive wavelet scheme, the residual ${\bf r}={\bf f} -{\bf A} {\bf \tilde{u}}$ has to be approximated within a sufficiently small relative tolerance.
The resulting scheme has been shown to converge with the best possible rate:
Whenever ${\bf u}$ {\em can} be approximated at rate $s$, i.e. ${\bf u} \in \cA^s$, meaning that for any $N \in \N$ there {\em exists} a vector of length $N$ that approximates ${\bf u}$ within tolerance ${\mathcal O}(N^{-s})$, the approximations produced by the scheme converge with this rate $s$.
Moreover, the scheme has {\em linear computational complexity} under the {\em cost condition} that
\begin{equation} \label{condition}
\parbox{11.4cm}{\em the approximate residual evaluation 
within an (absolute) tolerance $\eps \gtrsim \|{\bf r}\|$ requires not more than 
${\mathcal O}(\eps^{-1/s}+\# \supp {\bf \tilde{u}})$ operations.}
\end{equation}
The lower bound on $\eps$ reflects the fact that inside the adaptive scheme, it is never needed to approximate a residual more accurately than within a sufficiently small, but fixed relative tolerance. 
The validity of \eqref{condition} will require additional properties of $\Psi^\HH$ and $\Psi^\KK$ in addition to being Riesz bases. For that reason we consider wavelet bases.


The {\em standard way} to approximate the residual within tolerance $\eps$ is to approximate both ${\bf f}$ and ${\bf A} {\bf \tilde{u}}$ {\em separately} within tolerance $\eps/2$.
Under reasonable assumptions, ${\bf f}$ can be approximated within tolerance $\eps/2$ by a vector of length ${\mathcal O}(\eps^{-1/s})$.


For the approximation of ${\bf A} {\bf \tilde{u}}$, it is used that, thanks to the properties of the wavelets as having vanishing moments, 
each column of ${\bf A}$, although generally infinitely supported, can be well approximated by finitely supported vectors.
In the approximate matrix-vector multiplication routine introduced in \cite{45.2}, known as the {\bf APPLY}-routine, the accuracy with which a column is approximated is judiciously chosen {\em depending} on the size of the corresponding entry in the input vector ${\bf \tilde{u}}$.
It has been shown to realise a tolerance $\eps/2$ at cost ${\mathcal O}(\eps^{-1/s}|{\bf \tilde{u}}|_{\cA^{s}}^{1/s}+\# \supp {\bf \tilde{u}})$, for any $s$ in some range $(0,s^*]$. 
For wavelets that have {\em sufficiently many} vanishing moments, this range was shown to include the range of $s \in (0,s_{\max}]$ for which, in view of the order of the wavelets, ${\bf u} \in \cA^s$ can possibly be expected (cf. \cite{249.81}). 
Using that for the approximations ${\bf \tilde{u}}$ to ${\bf u}$ that are generated  inside the adaptive wavelet scheme, it holds that $|{\bf \tilde{u}}|_{\cA^{s}} \lesssim |{\bf u}|_{\cA^{s}}$,  in those cases the cost condition is satisfied, and so the adaptive wavelet scheme is optimal.


The {\bf APPLY}-routine, however, is quite difficult to implement. Note, in particular, that its outcome depends {\em nonlinearly} on the input vector ${\bf \tilde{u}}$.
Furthermore, in experiments, the routine turns out to be quantitatively expensive.
Finally, although it has been generalized to certain classes of semi-linear PDEs, in those cases it has not been shown that $s^* \geq s_{\max}$, meaning that for nonlinear problems the issue of optimality is actually open.


\subsection{An alternative for the {\bf APPLY} routine}
A main goal of this paper is to develop a quantitatively efficient alternative for the {\bf APPLY}-routine, that, moreover, gives rise to provable optimal adaptive wavelet schemes for classes of {\em semi-linear} PDEs, and applies to wavelets with only {\em one vanishing moment}.
As an introduction, we consider the model problem of Poisson's equation in one space dimension
$
\left\{
\begin{array}{@{}rcl@{}l@{}}
-u''&\!\!=\!\!&f& \text{ on } (0,1),\\ 
u&\!\!=\!\!&0& \text{ at }\{0,1\},
\end{array}
\right.$
that, in standard variational form, reads as finding $u \in \HH:=H^1_0(0,1)$ such that
$$
\langle u', v' \rangle_{L_2(0,1)}=  \langle f, v \rangle_{L_2(0,1)} ,\quad (v \in \KK:=H^1_0(0,1)),
$$
where, by identifying $L_2(0,1)'$ with $L_2(0,1)$ and using that $H_0^1(0,1) \hookrightarrow L_2(0,1)$ is dense, $\langle \,,\, \rangle_{L_2(0,1)}$ is also used to denote the duality pairing on $H^{-1}(0,1)\times H^1_0(0,1)$.
We consider piecewise polynomial, locally supported wavelet Riesz bases $\Psi^\HH$ and $\Psi^\KK$ for $H^1_0(0,1)$.
Let us exclusively consider {\em admissible} approximations ${\bf \tilde{u}}$ to ${\bf u}$ in the sense that their finite supports form {\em trees}, meaning that if $\lambda \in \supp {\bf \tilde{u}}$, then 
then there exists a $\mu  \in \supp {\bf \tilde{u}}$, whose level is one less than the level of $\lambda$, and $\meas(\supp \psi^\HH_\lambda \cap \supp \psi^\HH_\mu)>0$.
 It is known that the approximation classes $\cA^s$ become only `slightly' smaller by this restriction to tree approximation compared to unconstrained approximation (cf. \cite{45.21}).
 What is more, the restriction to tree approximation seems mandatory anyway to construct an optimal algorithm for nonlinear operators.
The benefit of tree approximation is that $\tilde{u}:={\bf \tilde{u}}^\top \Psi^\HH$ has an alternative, `single-scale' representation 
as a piecewise polynomial w.r.t. a partition ${\mathcal T}_1$ of $(0,1)$ with $\# {\mathcal T}_1 \lesssim \# {\supp {\bf \tilde{u}}}$.

For the moment, let us make the {\em additional assumption} that $\Psi^\HH$ is selected inside $H^2(0,1)$. Then, for an admissible ${\bf \tilde{u}}$, with its support denoted as $\Lambda^\HH$, integration-by-parts shows that
$$
{\bf r}:={\bf f}-{\bf A} {\bf \tilde{u}}=\langle \Psi^\KK,f+\tilde{u}'' \rangle_{L_2(0,1)},
$$
where $\tilde{u}''$ is piecewise polynomial w.r.t. ${\mathcal T}_1$.
If ${\bf u} \in \cA^s$, then for any $\eps>0$ there exists a piecewise polynomial $f_\eps$ w.r.t.\ a partition $\tria_2$ of (0,1) into ${\mathcal O}(\eps^{-1/s})$ subintervals such that $\|f-f_\eps\|_{H^{-1}(0,1)} \leq \eps$. \footnote{Indeed, for an admissible 
${\bf \bar{u}}$ with $\|{\bf u}-{\bf \bar{u}}\| \leq \eps$ and $\# \supp {\bf \bar{u}} \lesssim \eps^{-1/s}$, take $f_\eps=-\bar{u}''$ and use $\|f+\bar{u}''\|_{H^{-1}(\Omega)} \eqsim \|u-\bar{u}\|_{H^1(0,1)}\eqsim\|{\bf u}-{\bf \bar{u}}\|$.} The term $f-f_\eps$ is commonly referred to as {\em data oscillation}.

The function $f_\eps+\tilde{u}''$ is  piecewise polynomial w.r.t.\ the smallest common refinement $\tria$ of $\tria_1$ and $\tria_2$.
Thanks to this piecewise smoothness of $f_\eps+\tilde{u}''$ w.r.t. $\tria$, and the property of $\psi_\lambda^\KK$ having {\em one} vanishing moment,  $|\langle \psi_\lambda^\KK , f_\eps+\tilde{u}''\rangle_{L_2(0,1)}|$ is decreasing as function of the minimal difference of the {\em level} of $\psi_\lambda^\KK$ and that of any subinterval in $\tria$ that has non-empty intersection with $\supp \psi_\lambda^\KK$. Here with the level of $\psi_\lambda^\KK$ or that of an interval $\omega$, we mean an $\ell \in \N_0$ such that $2^{-\ell} \eqsim \diam(\supp \psi_\lambda^\KK)$ or $2^{-\ell} \eqsim \diam \omega$, respectively.
In particular, given a constant $\varsigma>0$, there exists a constant $k$, such that by dropping all $\lambda$ for which the aforementioned minimal level difference exceeds $k$, the remaining indices form a tree $\Lambda^\KK$ with $\# \Lambda^\KK \lesssim \# \tria \lesssim \eps^{-1/s}+\# \Lambda^\HH$ (dependent on $k$), and (see Prop.~\ref{prop1})
\begin{align*}
\|{\bf f}_\eps-{\bf A} {\bf \tilde{u}}-({\bf f}_\eps-{\bf A} {\bf \tilde{u}})|_{\Lambda^\KK}\|  \leq \varsigma \|f_\eps+\tilde{u}''\|_{H^{-1}(0,1)}
&\leq \varsigma \|f+\tilde{u}''\|_{H^{-1}(0,1)}+\varsigma \eps\\
&\lesssim \varsigma \|u-\tilde{u}\|_{H^1(0,1)}+\varsigma \eps,
\end{align*}
and so, using $\|{\bf r}\|\eqsim \|u-\tilde{u}\|_{H^1(0,1)}$ and $\|{\bf f}-{\bf f}_\eps\|\eqsim \|f-f_\eps\|_{H^{-1}(0,1)}$, 
$$
\|{\bf r}-{\bf r}|_{\Lambda^\KK}\| \lesssim  \varsigma \|{\bf r}\| +\eps.
$$
Note that for $\varsigma$ being sufficiently small, and so $k$ sufficiently large, by taking $\eps$ suitably the approximate residual will meet any accuracy that is required in the cost condition \eqref{condition}.

By selecting `single scale' collections $\Phi^\HH$ and $\Phi^\KK$ with $\Span \Phi^\HH \supseteq \Span \Psi^\HH|_{\Lambda^\HH}$ and $\Span \Phi^\KK \supseteq \Span \Psi^\KK|_{\Lambda^\KK}$, 
and $\# \Phi^\HH \lesssim \# \Lambda^\HH$ and $\# \Phi^\KK \lesssim \# \Lambda^\KK$, this approximate residual ${\bf r}|_{\Lambda^\KK}$ can be computed in ${\mathcal O}(\Lambda^\KK)$ operations as follows:
First express $\tilde{u}$ in terms of  $\Phi^\HH$ by applying a multi-to-single scale transformation to ${\bf \tilde{u}}$, then apply to this representation the sparse stiffness matrix $\langle (\Phi^\KK)',(\Phi^\HH)' \rangle_{L_2(0,1)}$, subtract $\langle \Phi^\KK,f \rangle_{L_2(0,1)}$, and finally apply the transpose of the multi-to-single scale transformation involving $\Psi^\KK|_{\Lambda^\KK}$ and $\Phi^\KK$.
This approximate residual evaluation thus satisfies the cost condition for optimality, it is relatively easy to implement, and it is observed to be quantitatively much more efficient. 

It furthermore generalizes to semi-linear operators, in any case for nonlinear terms that are multivariate polynomials in $u$ and derivatives of $u$.
Indeed, as an example, suppose that instead of $-u''=f$  the equation reads as $-u'' +u^3=f$. Then the residual is given by $\langle \Psi^\KK,f+\tilde{u}''-\tilde{u}^3 \rangle_{L_2(0,1)}$.
Since $f_\eps+\tilde{u}''-\tilde{u}^3$ is a piecewise polynomial w.r.t.\ $\tria$, the same arguments shows that $\langle \Psi^\KK,f+\tilde{u}''-\tilde{u}^3 \rangle_{L_2(0,1)}\big|_{\Lambda^\KK}$ is a valid approximate residual.
\medskip

The essential idea behind our approximate residual evaluation is that, after the replacement of $f$ by $f_\eps$, the different terms that constitute the residual are expressed in a {\em common} dictionary, before the residual, {\em as a whole}, is integrated against $\Psi^\KK$.
In our simple one-dimensional example this was possible by selecting $\Psi^\HH \subset H^2(0,1)$, so that the operator could be applied to the wavelets in strong, or more precisely, mild sense, meaning that the result of the application lands in $L_2(0,1)$.
It required piecewise smooth, globally $C^1$-wavelets. Although the same approach applies in more dimensions, there, except on product domains, the construction of $C^1$-wavelet bases is cumbersome.
For that reason, our approach will be to write a PDE of second order as a {\em system of PDEs of first order}.
It will turn out that there are several possibilities to do so.

\subsection{A common first order system least squares formulation}
To introduce ideas, let us again consider the model problem of Poisson's equation in one dimension. By introducing the additional unknown $\theta=u'$, for given $f \in L_2(0,1)$ this PDE can be written as the first order system
of  finding $(u,\theta) \in H^1_0(0,1) \times H^1(0,1)$ such that 
$$
\vec{H}(u,\theta):=(-\theta'-f,-u'+\theta)=\vec{0} \quad \text{in } L_2(0,1)\times L_2(0,1).
$$
The corresponding homogeneous operator\footnote{For general non-affine $\vec{H}$, $\vec{H}^{\rm h}$ should be read as the Fr\'{e}chet derivative $D\vec{H}(u,\theta)$.} $\vec{H}^{\rm h}:=(v,\eta)\mapsto (-\eta',-v'+\eta)$ is in $\Lis(H^1_0(0,1) \times H^1(0,1), L_2(0,1) \times L_2(0,1))$ (cf. \cite[(proof of) Thm.~3.1]{249.96}). To arrive at a symmetric and positive definite system, we consider the least squares problem of solving
$$
\argmin_{(u,\theta) \in H^1_0(0,1) \times H^1(0,1)} \|\vec{H}(u,\theta)\|_{L_2(0,1)\times L_2(0,1)}^2.
$$
Its solution solves the Euler-Lagrange equations 
$$
\langle \vec{H}^h(v,\eta),\vec{H}(u,\theta)\rangle_{L_2(0,1) \times L_2(0,1)}=0 \quad((v,\eta) \in H^1_0(0,1) \times H^1(0,1)).
$$
which in this setting are known as the {\em normal equations}.

To these 
 equations we apply the adaptive wavelet scheme, so with `$\HH$'$=$`$\KK$'$=H^1_0(0,1) \times H^1(0,1)$,
$($`$A$'$(u,\theta))(v,\eta):=\langle \vec{H}^h(v,\eta),\vec{H}^h(u,\theta)\rangle_{L_2(0,1) \times L_2(0,1)}$ and right-hand side
`$f$'$(v,\eta):=\langle f, -\eta' \rangle_{L_2(0,1)}$.
From $\vec{H}^{\rm h}$ being a homeomorphism with its range, i.e.,
$$
\|\vec{H}^{\rm h}(v,\eta)\|_{L_2(0,1) \times L_2(0,1)} \eqsim \|(v,\eta)\|_{H^1_0(0,1) \times H^1(0,1)},
$$
being a consequence of $\vec{H}^{\rm h}$ being even boundedly invertible between the full spaces, it follows 
that the bilinear form is bounded, symmetric, and coercive.
After equipping $H_0^1(0,1)$ and $H^1(0,1)$ with wavelet Riesz bases $\Psi^{H^1_0}$ and $\Psi^{H^1}$, for admissible ${\bf \tilde{u}}$ and $\bm{\tilde{\theta}}$, with $\tilde{u}:={\bf \tilde{u}}^\top \Psi^{H^1_0}$ and $\tilde{\theta}:=\bm{\tilde{\theta}}^\top \Psi^{H^1}$ the residual reads as
\begin{equation} \label{90}
{\bf r}=\left[
\begin{array}{c}
{\bf r}_1 \\ {\bf r}_2
\end{array}
\right]
=
\left[
\begin{array}{c}
\langle (\Psi^{H^1_0})',\tilde{u}'-\tilde{\theta}\rangle_{L_2(0,1)} \\
\langle (\Psi^{H^1})',\tilde{\theta}'+f\rangle_{L_2(0,1)} +\langle \Psi^{H^1},\tilde{\theta}-\tilde{u}'\rangle_{L_2(0,1)}
\end{array}
\right].
\end{equation}

The construction of an approximate residual follows the same lines as described before for  the standard variational formulation.\footnote{Actually, in the current setting its analysis is more straightforward, because the residuals are measured in $L_2(0,1)$ instead of in $H^{-1}(0,1)$.}
The functions $\tilde{u}',\,\tilde{\theta},\,\tilde{\theta}'$ are piecewise polynomials w.r.t.\ a partition $\tria_1$ of $(0,1)$ into ${\mathcal O}(\#\supp {\bf \tilde{u}}+\#\supp \bm{\tilde{\theta}})$ subintervals.
If $({\bf u},\bm{\theta}) \in \cA^s$, then there exists a piecewise polynomial $f_\eps$ w.r.t.\ a partition $\tria_2$ of $(0,1)$ into ${\mathcal O}(\eps^{-1/s})$ subintervals such that $\|f-f_\eps\|_{L_2(0,1)} \leq \eps$.
Thanks to the piecewise smoothness of $\tilde{u}'-\tilde{\theta}$ and $\tilde{\theta}'+f_\eps$, there exist trees $\Lambda^{H^1_0}$ and $\Lambda^{H^1}$, with $\# \Lambda^{H^1_0}+ \# \Lambda^{H^1} \lesssim \#\tria_1+\#\tria_2$ (dependent on $\varsigma$), such that
$$
\Big\|{\bf r}-
\left[
\begin{array}{c}
{\bf r}_1|_{\Lambda^{H^1_0}} \\ {\bf r}_2|_{\Lambda^{H^1}}
\end{array}
\right]
\Big\| \lesssim \varsigma (\|\tilde{u}'-\tilde{\theta}\|_{L_2(0,1)}+\|\tilde{\theta}'+f_\eps\|_{L_2(0,1)}) +\eps
\lesssim \varsigma \|{\bf r}\|+\eps.
$$
Since the approximate residual can be evaluated in ${\mathcal O}(\# \Lambda^{H^1_0}  \cup \Lambda^{H^1})$ operations, we conclude that it satisfies the cost condition \eqref{condition} for optimality of the adaptive wavelet scheme.
\medskip

\begin{remark} Recall that, as always with least squares formulations, the same results are valid when lower order, possibly {\em non-symmetric} terms are added to the second order PDE, as long as the standard variational formulation remains well-posed. Furthermore, as we will discuss, least squares formulations allows to handle {\em inhomogeneous boundary conditions}.
Finally, as we will see, the approach of reformulating a 2nd order PDE as a first order least squares problem, and then optimally solving the normal equations applies to {\em any} well-posed PDE, not necessarily being elliptic.
\end{remark}

In \cite{38.42} we applied the adaptive wavelet scheme to a least squares formulation of the current, common type.
{\em Disadvantages} of this formulation are that (i) it requires that $f \in L_2(0,1)$, instead of $f \in H^{-1}(0,1)$ as allowed in the standard variational formulation.
Related to that, and more importantly, for a semi-linear equation $-u''+N(u)=f$, (ii) it is needed that $N$ maps $H^1_0(0,1)$ into $L_2(0,1)$, instead of into $H^{-1}(0,1)$.
Finally, with the generalization of this least squares formulation to more than one space dimensions, (iii) the space $H^1(0,1)$ for $\theta$ reads as $H(\divv;\Omega)$.
In \cite{38.42}, for two-dimensional connected polygonal domains $\Omega$, we managed to construct a wavelet Riesz basis for $H(\divv;\Omega)$.
This construction, however, relied on the fact that, in two dimensions, any divergence-free function is the curl of an $H^1$-function.
To the best of our knowledge, wavelet Riesz bases for $H(\divv;\Omega)$ for non-product domains in three and more dimensions have not been constructed.

In the next subsection, we describe a prototype of a least-squares formulation with which these disadvantages (i)--(iii) are avoided.

\subsection{A seemingly unpractical least squares formulation} \label{Sseemingly}
The first order system least squares formulation that will be studied in this paper reads, for the model problem, as follows:
Again we introduce $\theta=u'$, but now consider the first order system of finding $(u,\theta) \in H^1_0(0,1) \times L_2(0,1)$ such that 
$$
\vec{H}(u,\theta):=(D'\theta+f,-u'+\theta)=\vec{0} \quad \text{in } H^{-1}(0,1)\times L_2(0,1).
$$
where $(D'\theta)(v):=-\langle \theta,v'\rangle_{L_2(0,1)}$, i.e., $D'\theta$ is the distributional derivative of $\theta$. 

In `primal' mixed form, this system reads as
$$
\langle \theta,v'\rangle_{L_2(0,1)}+\langle \theta-u',\eta \rangle_{L_2(0,1)}=\langle f,v \rangle_{L_2(0,1)} \quad ((v,\eta) \in H^1_0(0,1)\times L_2(0,1)).
$$
The 
corresponding homogeneous operator $\vec{H}^{\rm h}$ is in $\Lis(H^1_0(0,1) \times L_2(0,1), H^{-1}(0,1) \times L_2(0,1))$, and the least squares problem reads as
 solving
\begin{equation} \label{36}
\argmin_{(u,\theta) \in H^1_0(0,1) \times L_2(0,1)} \|\vec{H}(u,\theta)\|_{H^{-1}(0,1)\times L_2(0,1)}^2,
\end{equation}
with normal equations reading as
\begin{equation} \label{37}
\begin{split}
0 & =\langle \vec{H}^h(v,\eta),\vec{H}(u,\theta)\rangle_{H^{-1}(0,1) \times L_2(0,1)}\\
& = \langle D'\eta,D'\theta  -f \rangle_{H^{-1}(0,1)}+\langle -v'+\theta,-u'+\theta\rangle_{L_2(0,1)}
 \end{split}
\end{equation}
($(v,\eta) \in H^1_0(0,1) \times L_2(0,1)$).

In the terminology from \cite{23.5}, our current least squares problem is identified as being unpractical because of the appearance of the dual norm.
To deal with this, as  in \cite{56.3}, we select {\em some} wavelet Riesz basis $\Psi^{\hat{H}^1_0}$ for $H^1_0(0,1)$, and replace the norm on $H^{-1}(0,1)$ in \eqref{36} by the {\em equivalent} norm defined by $\|g(\Psi^{\hat{H}^1_0})\|$ for $g \in H^{-1}(\Omega)$.
Correspondingly, in \eqref{37} we replace the inner product $\langle g,h\rangle_{H^{-1}(0,1)}$ by $g(\Psi^{\hat{H}^1_0})^\top h(\Psi^{\hat{H}^1_0})$, so that 
the resulting normal equations read as finding $(u,\theta) \in H^1_0(0,1) \times L_2(0,1)$ such that
$$
\langle \eta, (\Psi^{\hat{H}^1_0})'\rangle_{L_2(0,1)}\big\{\langle (\Psi^{\hat{H}^1_0})',\theta \rangle_{L_2(0,1)}-\langle \Psi^{\hat{H}^1_0},f\rangle_{L_2(0,1)}\big\}+\langle -v'+\eta,-u'+\theta \rangle_{L_2(0,1)}=0
$$
for all $(v,\eta) \in H^1_0(0,1) \times L_2(0,1)$.

To apply the adaptive wavelet scheme to these normal equations, we equip $H^1_0(0,1)$ and $L_2(0,1)$ with wavelet Riesz bases $\Psi^{H^1_0}$ and $\Psi^{L_2}$, respectively. When these bases have order $p+1$ and $p$, the best possible convergence rate $s_{\max}$ will be equal to $p$ ($p/n$ on an $n$-dimensional domain).
Note that the order of the basis $\Psi^{\hat{H}^1_0}$ is irrelevant.

For approximations $(\tilde{u},\tilde{\theta})=({\bf \tilde{u}}^\top \Psi^{H^1},\bm{\tilde{\theta}}^\top \Psi^{L_2})$ for admissible ${\bf \tilde{u}}$ and $\bm{\tilde{\theta}}$, the residual ${\bf r}$  of $({\bf \tilde{u}},\bm{\tilde{\eta}})$ reads as
$$
\left[
\begin{array}{c}
\langle (\Psi^{H^1_0})',\tilde{u}'-\tilde{\theta} \rangle_{L_2(0,1)}\\
\langle \Psi^{L_2}, (\Psi^{\hat{H}^1_0})'\rangle_{L_2(0,1)} \big\{\langle (\Psi^{\hat{H}^1_0})',\tilde{\theta} \rangle_{L_2(0,1)}- \langle \Psi^{\hat{H}^1_0},f\rangle_{L_2(0,1)} \big\}+\langle \Psi^{L_2},\tilde{\theta}-\tilde{u}'\rangle_{L_2(0,1)}
\end{array}
\right]
$$
that, under the {\em additional condition} that $\Psi^{L_2} \subset H^1(0,1)$, and thus $\tilde{\theta} \in H^1(0,1)$, is equal to
\begin{equation} \label{91}
\left[
\begin{array}{c}
{\bf r}_1 \\
{\bf r}_2
\end{array}
\right]=
\left[
\begin{array}{@{}c@{}}
\langle (\Psi^{H^1_0})',\tilde{u}'-\tilde{\theta}\rangle_{L_2(0,1)}\\
-\langle \Psi^{L_2}, (\Psi^{\hat{H}^1_0})'\rangle_{L_2(0,1)} \langle \Psi^{\hat{H}^1_0},\tilde{\theta}'+f\rangle_{L_2(0,1)}+\langle \Psi^{L_2},\tilde{\theta}-\tilde{u}'\rangle_{L_2(0,1)}
\end{array}
\right]\!\!.
\end{equation}
This last step is essential because it allows us, after the replacement of $f$ by a piecewise polynomial $f_\eps$, to express
$\tilde{\theta}'+f_\eps$ in a common dictionary.
The additional condition is satisfied by piecewise polynomial, globally {\em continuous} wavelets, which are available on general domains in multiple dimensions.

In view of the previous discussions, to describe the approximate residual evaluation, it suffices to consider the term $\langle \Psi^{L_2}, {\bf z}^\top (\Psi^{\hat{H}^1_0})'\rangle_{L_2(0,1)}$ with ${\bf z}:= \langle \Psi^{\hat{H}^1_0},f+\tilde{\theta}'\rangle_{L_2(0,1)}$. The by now familiar approach is applied twice:
The function $\tilde{\theta}'$ is piecewise polynomial w.r.t.\ a partition $\tria_1$ into ${\mathcal O}(\# \supp \bm{\tilde{\theta}})$ subintervals. 
If $({\bf u},\bm{\theta}) \in \cA^s$, then there exists a piecewise polynomial $f_\eps$ w.r.t.\ a partition $\tria_2$ of (0,1) into ${\mathcal O}(\eps^{-1/s})$ subintervals such that $\|f-f_\eps\|_{H^{-1}(0,1)} \leq \eps$. 
Consequently, there exists a tree $\Lambda^{\hat{H}^1_0}$ with $\# \Lambda^{\hat{H}^1_0} \lesssim \# \supp \bm{\tilde{\theta}}+\eps^{-1/s}$ (dependent on $\varsigma$) such that $\|{\bf z}-{\bf z}|_{\Lambda^{\hat{H}^1_0}}\| \lesssim \varsigma \|f+\tilde{\theta}'\|_{H^{-1}(0,1)}+\eps$.
The function $\tilde{z}:={\bf z}|_{\Lambda^{\hat{H}^1_0}}^\top \Psi^{\hat{H}^1_0}$ is piecewise polynomial  w.r.t.\ a partition $\tria_2$ of (0,1) into ${\mathcal O}(\# \Lambda^{\hat{H}^1_0})$ subintervals.
Consequently, exists a tree $\Lambda_1^{L_2}$ with $\# \Lambda_1^{L_2} \lesssim \# \Lambda^{\hat{H}^1_0}$ (dependent on $\varsigma$) such that
 $\|\langle \Psi^{L_2}, \tilde{z}'\rangle_{L_2(0,1)}-\langle \Psi^{L_2}, \tilde{z}'\rangle_{L_2(0,1)}\big|_{\Lambda_1^{L_2}}\| \leq \varsigma \|\tilde{z}\|_{H^1(0,1)} \lesssim \varsigma(\|{\bf z}\|+\|{\bf z}-{\bf z}|_{\Lambda^{\hat{H}^1_0}}\|) \lesssim \varsigma(\|f+\tilde{\theta}'\|_{H^{-1}(0,1)}+\eps)$.
 Combining this with the approximations for the other two terms that constitute the residual, we infer that there exist trees $\Lambda^{H^1_0}$ and $\Lambda^{L_2}$ with $\# \Lambda^{H^1_0}+\# \Lambda^{L_2} \lesssim \#\supp \# {\bf \tilde{u}}+\# \supp \bm{\tilde{\theta}} +\eps^{-1/s}$ (dependent on $\varsigma$), such that 
 $$
\Big\|{\bf r}-
\left[
\begin{array}{c}
{\bf r}_1|_{\Lambda^{H^1_0}} \\ \tilde{\bf r}_2|_{\Lambda^{L_2}}
\end{array}
\right]
\Big\| \lesssim \varsigma (\|\tilde{u}'-\tilde{\theta}\|_{L_2(0,1)}+\|\tilde{\theta}'+f_\eps\|_{H^{-1}(0,1)}) +\eps
\lesssim \varsigma \|{\bf r}\|+\eps,
$$
where $\tilde{\bf r}_2$ is constructed from ${\bf r}_2$ by replacing ${\bf z}$ by ${\bf z}|_{\Lambda^{\hat{H}^1_0}}$.
This approximate residual evaluation satisfies the cost condition \eqref{condition} for optimality.

As we will see in Sect.~\ref{S1}, the advantage of the current construction of a first order system least squares problem is that it applies to {\em any} well-posed (semi-linear) second order PDE.
The two instances of the spaces $H^1_0(0,1)$ represent the trial and test spaces in the standard variational formulation, and well-posedness of the latter implies well-posedness of the least squares formulation. The additional space $L_2(0,1)$ reads in general as an $L_2$-type space.
So, in particular, with this formulation, $H(\divv)$-spaces do not enter.
The price to be paid is that \eqref{91} is somewhat more complicated than \eqref{90}, and that therefore  its approximation is somewhat more costly to compute.

\begin{remark} The more popular  `dual' mixed formulation of our model problem reads as finding $(u,\theta) \in L_2(0,1) \times H^1(0,1)$ such that
$\langle -\theta',v\rangle_{L_2(0,1)}+\langle \theta,\eta\rangle_{L_2(0,1)}+\langle u,\eta'\rangle_{L_2(0,1)}=\langle f,v\rangle_{L_2(0,1)}$ ($(v,\eta) \in L_2(0,1) \times H^1(0,1)$).
The resulting least squares formulation has the combined disadvantages of both other formulations that we considered.
It requires $f \in L_2(0,1)$, possibly nonlinear terms should map into $L_2(0,1)$, in more than one dimension the space $H^1(0,1)$ reads as an $H(\divv)$-space, and one of the norms involved in the least squares minimalisation is a dual norm.
\end{remark}

\begin{remark} With the aim to avoid both a dual norm in the least squares minimalisation, and $H(\divv)$ or other vectorial Sobolev spaces as trial spaces, in our first investigations of this least squares approach in \cite{249.94}, we considered the `extended div-grad' first order system least squares formulation studied in \cite{35.9301}. A sufficient and necessary  (\cite{249.96}), but restrictive condition for its well-posedness is $H^2$-regularity of the homogeneous boundary value problem.
\end{remark}


\subsection{Layout of the paper} In Sect.~\ref{S1}, a general procedure is given to reformulate {\em any} well-posed semi-linear 2nd order PDE  as a well-posed first order least squares problem.
As we will see, this procedure gives an effortless derivation of well-posed first order least squares formulations of elliptic 2nd order PDEs, and that of the stationary Navier-Stokes equations.
The arising dual norm can be replaced by the equivalent $\ell_2$-norm of a functional in wavelet coordinates.

In Sect.~\ref{Sawgm}, we recall properties of the adaptive wavelet Galerkin method ({\bf awgm}). Operator equations of the form $F(z)=0$, where, for some Hilbert space $\HH$,  $F:\HH \rightarrow \HH'$ and $DF(z)$ is symmetric and positive definite, are solved by the {\bf awgm} at the best possible rate from a Riesz basis for $\HH$.
Furthermore, under a condition on the cost of the  approximate residual evaluations, the method has optimal computational complexity.

In the short Sect.~\ref{Snormal}, it is shown that the {\bf awgm} applies to the normal equations that result from the first order least squares problems as derived in Sect.~\ref{S1}.

In Sect.~\ref{Selliptic}, we apply the {\bf awgm} to a first order least squares formulation of a semi-linear 2nd order elliptic PDE with general inhomogeneous boundary conditions. 
Under a mild condition on the wavelet basis for the trial space, the efficient approximate residual evaluation that was outlined in Sect.~\ref{Sseemingly} applies, and it satisfies the cost condition, so that the {\bf awgm} is optimal. Wavelet bases that satisfy the assumptions are available on general polygonal domains.
Some technical results needed for this section are given in Appendix~\ref{Sdecay}.

In Sect.~\ref{Snumerics} the findings from Sect.~\ref{Selliptic} are illustrated by numerical results.

In Sect.~\ref{SNSE}, we consider the so-called velocity--pressure--velocity gradient and the velocity--pressure--vorticity first order system formulations of the stationary Navier-Stokes equations.
Results analogously to those demonstrated for the elliptic problem will be shown to be valid here as well.



\section{Reformulation of a semi-linear second order PDE as a first order system least squares problem}  \label{S1}

In an abstract framework, we give a procedure to write semi-linear second order PDEs, that have well-posed standard variational formulations, as a well-posed first order system least squares problems. A particular instance of this approach has been discussed in Sect.~\ref{Sseemingly}.

For some separable Hilbert spaces $\UU$ and $\VV$, for convenience over $\R$, consider a differentiable mapping
$$
G:\UU \supset\dom(G) \rightarrow \VV'.
$$
\begin{remark}  In applications $G$ is the operator associated to a variational formulation of a PDO with trial space $\UU$ and test space $\VV$.
\end{remark}

For $\PP$ being another separable Hilbert space, let
\begin{equation} \label{9}
G=G_0+G_1 G_2,\text{ where }
G_1 \in \cL(\PP,\VV'),\,G_2 \in \cL(\UU,\PP),
\end{equation}
i.e., $G_1$ and $G_2$ are bounded linear operators.

\begin{remark} In applications, as those discussed in Sect.~\ref{Selliptic} and ~\ref{SNSE}, $G_1 G_2$ will be a factorization of
the leading second order part of the PDO (possibly modulo terms that vanish at the solution, cf. Sect.~\ref{SVPV}) into a product of first order PDOs.
\end{remark}


Obviously, $u$ solves \framebox{$G(u)=0$} if and only if it is the first component of the solution $(u,\theta)$ of
$$
\framebox{$\vec{H}(u,\theta):=(G_0(u)+G_1 \theta,\theta-G_2 u)=\vec{0},$}
$$
where $\vec{H}:\UU \times \PP \supset \dom(G)\times \PP=\dom(\vec{H}) \rightarrow \VV' \times \PP$.

The following lemma shows that well-posedness of the original formulation implies that of the reformulation as a system.

\begin{lemma} \label{lem1} Let $DG(u) \in \cL(\UU,\VV')$ be a homeomorphism with its range, i.e., $\|DG(u) v\|_{\VV'} \eqsim \|v\|_\UU$ $(v \in \UU)$.
Then
$$
D\vec{H}(u,\theta)=\left[\begin{array}{@{}cc@{}} DG_0(u) & G_1 \\ -G_2 & I \end{array} \right] \in \cL(\UU\times \PP,\VV'\times \PP)
$$
is a homeomorphism with its range, being $\{(f,g) \in \VV'\times \PP\colon f-G_1g \in \ran DG(u)\}$. In particular, with $r, R>0$ such that
$r \|v\|_\UU \leq \|DG(u)(v)\|_{\VV'} \leq R \|v\|_\UU$, it holds that
\begin{align} \label{een}
\|(v,\eta)\|_{\UU\times \PP}^2 \leq \Big[({\textstyle \frac{1+\|G_2\|}{r}})^2+(1+{\textstyle\frac{1+\|G_1\|(1+\|G_2\|)}{r}})^2\Big]
\Big\|D\vec{H}(u,\theta) \left[\begin{array}{@{}c@{}} v \\ \eta \end{array} \right]\Big\|_{\VV' \times \PP}^2,
 \\ \label{twee}
\Big\|D\vec{H}(u,\theta) \left[\begin{array}{@{}c@{}} v \\ \eta \end{array} \right]\Big\|_{\VV' \times \PP}^2 \!\!\leq 
\big((R\!+\!(1\!+\!\|G_1\|)\|G_2\|)^2\!+\!(1\!+\!\|G_1\|)^2\big)\|(v,\eta)\|_{\UU\times \PP}^2.
\end{align}
\end{lemma}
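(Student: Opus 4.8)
The plan is to establish both inequalities directly from the hypothesis $r\|v\|_\UU \leq \|DG(u)(v)\|_{\VV'} \leq R\|v\|_\UU$ by exploiting the block structure of $D\vec{H}(u,\theta)$. Writing out the action, I have
$$
D\vec{H}(u,\theta)\left[\begin{array}{@{}c@{}} v \\ \eta \end{array}\right] = \left[\begin{array}{@{}c@{}} DG_0(u)v + G_1\eta \\ -G_2 v + \eta \end{array}\right] =: \left[\begin{array}{@{}c@{}} f \\ g \end{array}\right].
$$
The key algebraic observation, which also identifies the range, is that since $DG(u) = DG_0(u) + G_1 G_2$ (differentiating the factorization in \eqref{9}), the first component can be rewritten as $f = DG(u)v - G_1 G_2 v + G_1\eta = DG(u)v + G_1(\eta - G_2 v) = DG(u)v + G_1 g$. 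Thus $f - G_1 g = DG(u)v$, which lies in $\ran DG(u)$, confirming the claimed range; conversely any $(f,g)$ in that set is hit by solving $DG(u)v = f - G_1 g$ for $v$ (unique since $DG(u)$ is injective by the lower bound) and setting $\eta = g + G_2 v$.

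For the coercivity bound \eqref{een} I would recover $v$ and $\eta$ from $f$ and $g$. From $f - G_1 g = DG(u)v$ and the lower bound on $DG(u)$, I get $\|v\|_\UU \leq \frac{1}{r}\|DG(u)v\|_{\VV'} = \frac{1}{r}\|f - G_1 g\|_{\VV'} \leq \frac{1}{r}(\|f\|_{\VV'} + \|G_1\|\,\|g\|_\PP)$. Then from $\eta = g + G_2 v$ I estimate $\|\eta\|_\PP \leq \|g\|_\PP + \|G_2\|\,\|v\|_\UU$, substituting the bound just obtained for $\|v\|_\UU$. The remaining work is to combine $\|v\|_\UU^2 + \|\eta\|_\PP^2$ and bound it by a multiple of $\|f\|_{\VV'}^2 + \|g\|_\PP^2$; using $\|f\|_{\VV'},\|g\|_\PP \leq \|(f,g)\|_{\VV'\times\PP}$ and collecting the coefficients should reproduce the stated constant $(\frac{1+\|G_2\|}{r})^2 + (1 + \frac{1+\|G_1\|(1+\|G_2\|)}{r})^2$, where the first summand controls $\|v\|_\UU$ and the second controls $\|\eta\|_\PP$.

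For the boundedness bound \eqref{twee} the direction is reversed and simpler: I estimate $\|f\|_{\VV'} = \|DG(u)v + G_1 g\|_{\VV'}$ — more conveniently via $f = DG(u)v - G_1 G_2 v + G_1\eta$ — giving $\|f\|_{\VV'} \leq R\|v\|_\UU + \|G_1\|\,\|G_2\|\,\|v\|_\UU + \|G_1\|\,\|\eta\|_\PP$, and $\|g\|_\PP = \|\eta - G_2 v\|_\PP \leq \|\eta\|_\PP + \|G_2\|\,\|v\|_\UU$. Squaring, adding, and bounding $\|v\|_\UU,\|\eta\|_\PP$ by $\|(v,\eta)\|_{\UU\times\PP}$ yields the coefficient $(R + (1+\|G_1\|)\|G_2\|)^2 + (1+\|G_1\|)^2$. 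The main obstacle is purely bookkeeping: arranging the triangle-inequality estimates so that the coefficients group exactly into the two squared factors as written, rather than into a looser constant. I expect this to require carefully choosing which term each $\|v\|_\UU$- and $\|\eta\|_\PP$-contribution is attached to, but no genuine difficulty beyond that careful accounting.
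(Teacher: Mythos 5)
Your core argument coincides with the paper's: you use $DG_0(u)=DG(u)-G_1G_2$ to rewrite the first component as $DG(u)v+G_1(\eta-G_2v)$, read off the range as $\{(f,g)\colon f-G_1g\in\ran DG(u)\}$, and invert the block-triangular system via $DG(u)v=f-G_1g$, $\eta=g+G_2v$. That part is correct and is exactly how the paper proceeds.

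The gap is in the constant-chasing, which is not the harmless bookkeeping you expect. First, your attribution for \eqref{een} --- ``the first summand controls $\|v\|_\UU$, the second controls $\|\eta\|_\PP$'' --- cannot be made to work: the only available bound is $\|v\|_\UU\le\frac{1}{r}(\|f\|_{\VV'}+\|G_1\|\,\|g\|_\PP)$, and the per-component estimate $\|v\|_\UU\le\frac{1+\|G_2\|}{r}\|(f,g)\|_{\VV'\times\PP}$ is false in general: take $\UU=\PP=\VV=\R$, $DG(u)=r$, $G_2=0$, $\|G_1\|>1$, $(f,g)=(0,1)$; then $v=-G_1/r$, so $\|v\|=\|G_1\|/r>\frac{1+\|G_2\|}{r}$. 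Second, your recipe for \eqref{twee} (``square, add, and bound $\|v\|_\UU,\|\eta\|_\PP$ by $\|(v,\eta)\|_{\UU\times\PP}$'') yields the constant $(R+\|G_1\|(1+\|G_2\|))^2+(1+\|G_2\|)^2$, not the stated $(R+(1+\|G_1\|)\|G_2\|)^2+(1+\|G_1\|)^2$; the former exceeds the latter whenever $\|G_1\|>\|G_2\|$ (e.g.\ $R=1$, $\|G_1\|=10$, $\|G_2\|=1$ gives $445$ versus $265$), so this route proves a strictly weaker inequality than the lemma asserts. The missing idea, which is what the paper does, is to estimate weighted \emph{sums} while keeping $\|g\|_\PP=\|\eta-G_2v\|_\PP$ as a single unit, and only then apply Cauchy--Schwarz to the pair of coefficients: for \eqref{twee}, $\|f\|_{\VV'}+\|g\|_\PP\le R\|v\|_\UU+(1+\|G_1\|)\|\eta-G_2v\|_\PP\le (R+(1+\|G_1\|)\|G_2\|)\|v\|_\UU+(1+\|G_1\|)\|\eta\|_\PP$, followed by Cauchy--Schwarz in $(\|v\|_\UU,\|\eta\|_\PP)$; for \eqref{een}, $\|v\|_\UU+\|\eta\|_\PP\le(1+\|G_2\|)\|v\|_\UU+\|g\|_\PP\le \frac{1+\|G_2\|}{r}\|f\|_{\VV'}+\big(1+\frac{\|G_1\|(1+\|G_2\|)}{r}\big)\|g\|_\PP$, followed by Cauchy--Schwarz in $(\|f\|_{\VV'},\|g\|_\PP)$ (the paper's stated second summand is even slightly larger than this, so \eqref{een} follows a fortiori). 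In other words, the two squared summands in each constant are the squares of the coefficients of $\|f\|$ and $\|g\|$ (respectively of $\|v\|$ and $\|\eta\|$) in these linear bounds, not separate bounds on $\|v\|$ and $\|\eta\|$ individually.
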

\begin{remark}
Since $\ran DG(u)=\VV'$ iff $\ran D\vec{H}(u,\theta)=\VV'\times \PP$,  in particular we have that $DG(u) \in \Lis(\UU,\VV')$ implies that $D\vec{H}(u,\theta) \in \Lis(\UU \times \PP,\VV' \times \PP)$.
\end{remark}

\begin{proof} 
We have
$D\vec{H}(u,\theta)\left[\begin{array}{@{}c@{}} v \\ \eta \end{array} \right] =\left[\begin{array}{@{}c@{}} DG_0(u)v+G_1\eta \\ \eta-G_2 v \end{array} \right]
=\left[\begin{array}{@{}c@{}} DG(u) v+G_1(\eta-G_2v) \\ \eta-G_2 v \end{array} \right]$ by $DG_0(\cdot)=DG(\cdot)-G_1 G_2$.
So
\begin{equation} \label{35}
\ran D\vec{H}(u,\theta) \subseteq \{(f,g) \in \VV' \times \PP \colon f-G_1g \in \ran DG(u)\}.
\end{equation}
By estimating $\|D\vec{H}(u,\theta) \left[\begin{array}{@{}c@{}} v \\ \eta \end{array} \right]\|_{\VV' \times \PP} \leq \|DG(u) v+G_1(\eta-G_2v)\|_{\VV'}+\|\eta-G_2 v\|_\PP$, one easily arrives at \eqref{twee}.

For $(f,g)$ being in the set at the right-hand side in \eqref{35}, consider the system
$$
D\vec{H}(u,\theta) \left[\begin{array}{@{}c@{}} v \\ \eta \end{array} \right]=\left[\begin{array}{@{}c@{}} f \\ g \end{array} \right] \Longleftrightarrow
\left\{ \begin{array}{@{}r@{}c@{}c@{}}  DG_0(u) v+G_1 \eta &\,=\,& f \\
\eta-G_2 v &\,=\,& g
\end{array} \right.
\Longleftrightarrow
\left\{ \begin{array}{@{}r@{}c@{}c@{}}  DG(u) v &\,=\,& f-G_1 g \\
\eta &\,=\,& g+G_2 v
\end{array} \right..
$$
This system has a unique solution, so that the $\subseteq$-symbol in \eqref{35} reads as an equality sign, 
and  $r\|v\|_\UU \leq \|f\|_{\VV'}+\|G_1\|\|g\|_{\PP}$ and $\|\eta\|_\PP \leq \|g\|_\PP+\|G_2\|\|v\|_\UU$.
By estimating $\|(v,\eta)\|_{\UU\times \PP} \leq \|v\|_\UU+\|\eta\|_\PP$ one easily arrives at \eqref{een}.
\end{proof}

In the following, we will always assume that
\renewcommand{\theenumi}{\roman{enumi}}
\begin{enumerate} \item \label{r1} there exists a solution $u$ of $G(u)=0$;
\item \label{r2}$G$ is two times continuously Fr\'{e}chet differentiable in a neighborhood of $u$;
\item \label{r3}$DG(u) \in \cL(\UU,\VV')$ is a homeomorphism with its range.
\end{enumerate}
Then
\renewcommand{\theenumi}{\alph{enumi}}
\begin{enumerate} 
\item \label{r4} $(u,\theta)=(u,G_2 u)$ solves $\vec{H}(u,\theta)=\vec{0}$;
\item \label{r5} $\vec{H}$ is two times continuously Fr\'{e}chet differentiable in a neighborhood of $(u,\theta)$;
\item \label{r6} $D\vec{H}(u,\theta) \in \cL(\UU\times \PP,\VV' \times \PP)$ is a homeomorphism with its range,
\end{enumerate}
the latter by Lemma~\ref{lem1}.
In summary, when the equation $G(u)=0$ is {\em well-posed} (\eqref{r1}-\eqref{r3} are valid), then so is $\vec{H}(u,\theta)=\vec{0}$ (\eqref{r4}-\eqref{r6} are valid), and solving one equation is equivalent to solving the other. 

\begin{remark}Actually, one might dispute whether these equations should be called well-posed when $\ran DG(u) \subsetneq \VV'$ and so 
$\ran D\vec{H}(u,\theta) \subsetneq \VV' \times \PP$. 
In any case, under conditions \eqref{r1}--\eqref{r3}, and so \eqref{r4}--\eqref{r6}, the corresponding least-squares problems and resulting (nonlinear) normal equations {\em are} well-posed, as we will see next.
\end{remark}

A solution $(u,\theta)$ of $\vec{H}(u,\theta)=\vec{0}$ is a minimizer of the least squares functional
$$
\framebox{$Q(u,\theta):={\textstyle \frac{1}{2}} \|\vec{H}(u,\theta)\|_{\VV' \times \PP}^2$.}
$$
In particular, it holds that
\begin{lemma} \label{lem2} For $\vec{H}:\UU \times \PP \supset \dom(\vec{H}) \rightarrow \VV' \times \PP$, and $\vec{H}$  being Fr\'{e}chet differentiable at a root $(u,\theta)$, property \eqref{r6}
is equivalent to the property that for $(\tilde{u},\tilde{\theta}) \in \UU \times \VV$ in a neighbourhood of $(u,\theta)$, 
$$
Q(\tilde{u},\tilde{\theta}) \eqsim  \|u-\tilde{u}\|^2_{\UU}+\|\theta-\tilde{\theta}\|^2_{\PP}.
$$
\end{lemma}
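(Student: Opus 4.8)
The plan is to exploit the first-order Taylor expansion of $\vec{H}$ at the root $(u,\theta)$ and to reduce both implications to the elementary equivalence $\|D\vec{H}(u,\theta) w\|_{\VV'\times\PP} \eqsim \|w\|_{\UU\times\PP}$ for $w \in \UU\times\PP$, which is precisely property \eqref{r6}. Since $\vec{H}(u,\theta)=\vec{0}$ and $\vec{H}$ is Fr\'{e}chet differentiable at $(u,\theta)$, I would write, for $(\tilde{u},\tilde{\theta})$ in a neighbourhood of $(u,\theta)$ and $\delta:=(\tilde{u}-u,\tilde{\theta}-\theta)$,
$$
\vec{H}(\tilde{u},\tilde{\theta})=D\vec{H}(u,\theta)\delta+R(\delta), \qquad \|R(\delta)\|_{\VV'\times\PP}=o(\|\delta\|_{\UU\times\PP}) \ \ (\|\delta\|\to 0),
$$
so that $2Q(\tilde{u},\tilde{\theta})=\|D\vec{H}(u,\theta)\delta+R(\delta)\|_{\VV'\times\PP}^2$ and $\|\delta\|_{\UU\times\PP}^2=\|u-\tilde{u}\|_\UU^2+\|\theta-\tilde{\theta}\|_\PP^2$.

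For the implication \eqref{r6} $\Rightarrow$ the norm equivalence, I would use that \eqref{r6} furnishes constants $0<r'\le R'$ with $r'\|\delta\|\le\|D\vec{H}(u,\theta)\delta\|\le R'\|\delta\|$. Shrinking the neighbourhood of $(u,\theta)$ so that $\|R(\delta)\|\le\tfrac12 r'\|\delta\|$ there, the triangle inequality yields $\tfrac12 r'\|\delta\|\le\|\vec{H}(\tilde{u},\tilde{\theta})\|\le(R'+\tfrac12 r')\|\delta\|$; squaring gives $Q(\tilde{u},\tilde{\theta})\eqsim\|u-\tilde{u}\|_\UU^2+\|\theta-\tilde{\theta}\|_\PP^2$.

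For the converse, I would fix a direction $w=(v,\eta)\in\UU\times\PP$ and test the hypothesised equivalence along the ray $\delta=t w$ with small $t>0$, where it reads $c\,t^2\|w\|^2\le Q(u+tv,\theta+t\eta)\le C\,t^2\|w\|^2$ for uniform constants $c,C>0$. Dividing $2Q$ by $t^2$ and letting $t\to0^+$, Fr\'{e}chet (indeed already G\^{a}teaux) differentiability gives $\|R(tw)\|/t\to 0$, whence $\|\vec{H}(u+tv,\theta+t\eta)\|/t\to\|D\vec{H}(u,\theta)w\|$, so the squared inequalities pass to the limit as $2c\|w\|^2\le\|D\vec{H}(u,\theta)w\|^2\le 2C\|w\|^2$. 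Since $c,C$ are independent of $w$, this is exactly the homeomorphism-onto-range property \eqref{r6}.

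I expect the only delicate point to be the bookkeeping of the remainder $R(\delta)$: in the forward direction one must shrink the neighbourhood so that the $o(\|\delta\|)$ term is absorbed into the lower bound coming from \eqref{r6}, while in the converse one must note that the directional difference quotient $\|R(tw)\|/t$ vanishes as $t\to0^+$ and that the equivalence constants $c,C$ remain uniform in the direction $w$. Granting these observations, both chains of inequalities are routine.
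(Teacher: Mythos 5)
Your proof is correct and rests on exactly the same idea as the paper's one-line argument, namely the expansion $\vec{H}(\tilde{u},\tilde{\theta})=D\vec{H}(u,\theta)(\tilde{u}-u,\tilde{\theta}-\theta)+o(\|\tilde{u}-u\|_\UU+\|\tilde{\theta}-\theta\|_\PP)$ at the root. You merely carry out in detail the bookkeeping (absorbing the remainder in the forward direction, and the ray limit $t\to 0^+$ in the converse) that the paper leaves implicit.
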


\begin{proof} This is a consequence of  $\vec{H}(\tilde{u},\tilde{\theta})=D\vec{H}(u,\theta)(\tilde{u}-u,\tilde{\theta}-\theta)+o(\|\tilde{u}-u\|_\UU+\|\tilde{\theta}-\theta\|_\PP)$.
\end{proof}

A minimizer $(u,\theta)$ of $Q$ is a solution of the Euler-Lagrange equations
\begin{equation} \label{1}
\framebox{$DQ(u,\theta)(v,\eta)= \big\langle D\vec{H}(u,\theta) \left[\begin{array}{@{}c@{}} v \\ \eta \end{array} \right], \vec{H}(u,\theta)\big\rangle_{\VV' \times \PP}=0 \quad ((v,\eta) \in  (\UU \times \PP)),$}
\end{equation}
that, in this setting, are usually called (nonlinear) {\em normal equations}.
Using \eqref{r4}-\eqref{r5},
one computes that
$$
D^2 Q(u,\theta)(v_1,\eta_1)(v_2,\eta_2)=\big\langle D\vec{H}(u,\theta) \left[\begin{array}{@{}c@{}} v_1 \\ \eta_1 \end{array} \right], D\vec{H}(u,\theta) \left[\begin{array}{@{}c@{}} v_2 \\ \eta_2 \end{array} \right] \big\rangle_{\VV' \times \PP}.
$$

We conclude the following: Under the assumptions \eqref{r4}--\eqref{r6}, it holds that 
\renewcommand{\theenumi}{\arabic{enumi}}
\begin{enumerate}
\item \label{38} $D Q$ is a mapping from a subset of a separable Hilbert space, viz. $\UU\times \PP$, to its dual;
\item there exists a solution of $DQ(u,\theta)=0$ (viz. any solution of $\vec{H}(u,\theta)=0$);
\item $DQ$ is continuously Fr\'{e}chet differentiable in a neighborhood of $(u,\theta)$;
\item \label{41} $0<D^2 Q(u,\theta)= D^2 Q(u,\theta)' \in \Lis(\UU\times \PP,(\UU\times \PP)')$.
\end{enumerate}
As a consequence of the last property, one infers that in a neighborhood of $(u,\theta)$, $DQ(u,\theta)=0$ has exactly one solution.
\medskip

In view of the above findings, in order to solve $G(u)=0$, for a $G$ that satisfies \eqref{r1}--\eqref{r3}, we are going to solve the (nonlinear) normal equations $DQ(u,\theta)=0$.
A major advantage of $DQ$ over $G$ is that its derivative is symmetric and coercive.

A concern, however, is whether, for given $(u,\theta), (v,\eta) \in \UU \times \PP$, $DQ(u,\theta)(v,\eta)$ as given by \eqref{1} is evaluable. We will think of
the inner product on $\PP$ as being evaluable. In our applications, $\PP$ will be of the form $L_2(\Omega)^N$.
To deal with the dual norm on $\VV'$, we equip $\VV$ with a {\em Riesz basis}
$$
\Psi^\VV=\{\psi_\lambda^{\VV}:\lambda \in \vee_\VV\},
$$
meaning that the {\em analysis operator}
$$
\cF_\VV:g \mapsto [g(\psi^\VV_\lambda)]_{\lambda \in \vee_\VV} \in \Lis(\HH,\ell_2(\vee_\VV)),
$$
and so its adjoint, known as the {\em synthesis operator},
$$
\cF_\VV':{\bf v} \mapsto {\bf v}^\top \Psi^\VV:=\sum_{\lambda \in \vee_\VV} v_\lambda \psi^\VV_\lambda \in \Lis(\ell_2(\vee_\VV),\VV').
$$
In the definition of the least squares functional $Q$, and consequently in that of $DQ$, we now {\em replace} the standard dual norm on $\VV'$  by the {\em equivalent}  norm $\|\cF_\VV \cdot\|_{\ell_2(\vee_\VV)}$.
Then in view of the definition of $\vec{H}$ and the expression for $D\vec{H}$, we obtain that
\begin{equation} \label{7}
\framebox{$\begin{split}
&DQ(u,\theta)(v,\eta)=\\
&
(DG_0(u)v+G_1 \eta)(\Psi^\VV)^\top (G_0(u)+G_1 \theta)(\Psi^\VV)+\langle \eta-G_2 v,\theta-G_2 u\rangle_{\PP},
\end{split}$}
\end{equation}
where \eqref{38}--\eqref{41} are still valid.

\begin{remark} 
We refer to \cite{35.83} for an alternative approach to solve least square problems that involves dual norms.
\end{remark}

To solve the obtained (nonlinear) normal equations $DQ(u,\theta)=0$ we are going to apply the adaptive wavelet Galerkin method ({\bf awgm}).
Note that the definition of $DQ(u,\theta)(v,\eta)$ still involves an infinite sum over $\vee_\VV$ that later, inside the solution process, is going to be replaced by a finite one.

\section{The adaptive wavelet Galerkin method ({\bf awgm})} \label{Sawgm}
In this section, we summarize findings about the {\bf awgm} from \cite{249.94,38.42}. Let
\begin{enumerate} \renewcommand{\theenumi}{\Roman{enumi}}
\item \label{12} $F:\HH \supset \dom(F) \rightarrow \HH'$, with $\HH$ being a separable Hilbert space;
\item \label{13} $F(z)=0$;
\item \label{14} $F$ be continuously differentiable in a neighborhood of $z$;
\item \label{15} $0<DF(z)=DF(z)' \in \Lis(\HH,\HH')$.
\end{enumerate}
In our applications, the triple $(F,\HH,z)$ will read as $(DQ,\UU \times \PP,(u,\theta))$,  so that \eqref{12}-\eqref{15} are guaranteed by \eqref{38}--\eqref{41}.

Let $\Psi=\{\psi_\lambda:\lambda \in \vee\}$ be a {\em Riesz basis} for $\HH$, with analysis operator $\cF:g \mapsto [g(\psi_\lambda)]_{\lambda \in \vee} \in \Lis(\HH,\ell_2(\vee))$,
and so synthesis operator $\cF':{\bf v} \mapsto {\bf v}^\top \Psi:=\sum_{\lambda \in \vee} v_\lambda \psi_\lambda \in \Lis(\ell_2(\vee),\HH')$. For  any $\Lambda \subset \vee$, we set
$$
\ell_2(\Lambda):=\{{\bf v}\in \ell_2(\vee)\colon\supp {\bf v} \subset \Lambda\}.
$$
For satisfying the forthcoming Condition~\ref{nonlineareval} that concerns the computational cost, it will be relevant that $\Psi$ is a basis of {\em wavelet} type.

Writing $z=\cF' {\bf z}$, and with
$$
{\bf F}:=\cF F \cF'\colon\ell_2(\vee) \rightarrow \ell_2(\vee),
$$
an {\em equivalent} formulation of $F(z)=0$ is given by
$$
{\bf F}({\bf z})=0.
$$

We are going to approximate ${\bf z}$, and so $z$, by a sequence of Galerkin approximations from the spans of increasingly larger sets of wavelets, which sets are created by an adaptive process.
Given $\Lambda \subset \vee$, the Galerkin approximation ${\bf z}_\Lambda$, or equivalently, $z_\Lambda:={\bf z}_\Lambda^\top \Psi$, are the solutions
of $\langle {\bf F}({\bf z}_\Lambda),{\bf v}_\Lambda \rangle_{\ell_2(\vee)}=0$ (${\bf v}_\Lambda \in \ell_2(\Lambda)$), i.e., ${\bf F}({\bf z}_\Lambda)|_{\Lambda}=0$, and $F(z_\Lambda)(v_\Lambda)=0$ ($v_\Lambda \in \Span\{\psi_\lambda:\lambda \in \Lambda\}$), respectively.
These solutions exist uniquely when $\inf_{{\bf \tilde{z}}_\Lambda \in \ell_2(\Lambda)} \|{\bf z}-{\bf \tilde{z}}_\Lambda\|$ is sufficiently small (\cite{243.4, 249.94}).

In order to be able to construct efficient algorithms, in particular when $F$ is non-affine, it will be needed to consider only sets $\Lambda$ from a certain subset of all finite subsets of $\vee$.
In our applications, this collection of so-called {\em admissible} $\Lambda$ will consist of (Cartesian products of) finite {\em trees}.
For the moment,  it suffices when the collection of admissible sets is such that
the union of any two admissible sets is again admissible.

To provide a benchmark to evaluate our adaptive algorithm, for $s>0$, we define the nonlinear {\em approximation class}
\begin{equation} \label{class}
\begin{split}
\cA^s:=\Big\{{\bf z} \in &\ell_2(\vee)\colon \|{\bf z}\|_{\cA^s}:=\\
&\sup_{\eps>0} \eps \times \min\big\{(\# \Lambda)^s\colon \Lambda \text{ is admissible, }\inf_{{\bf \tilde{z}} \in \ell_2(\Lambda)} \|{\bf z}-{\bf \tilde{z}}\|\leq \eps\big\}<\infty\Big\}.
\end{split} 
\end{equation}
A vector ${\bf z}$ is in $\cA^s$ if and only if  there exists a sequence of admissible $(\Lambda_i)_i$, with $\lim_{i \rightarrow \infty} \#\Lambda_i=\infty$, such that $\sup_i \inf_{{\bf z}_i \in \ell_2(\Lambda_i)} (\#\Lambda_i)^s \|{\bf z}-{\bf z}_i\|<\infty$. This means that ${\bf z}$ {\em can} be approximated in $\ell_2(\vee)$ at rate $s$ by vectors supported on admissible sets, or, equivalently, $z$ can be approximated in $\HH$ at rate $s$ from spaces of type $\Span\{\psi_\lambda\colon\lambda \in \Lambda,\,\Lambda \text{ is admissible}\}$.

The adaptive wavelet Galerkin method ({\bf awgm}) defined below produces a sequence of increasingly more accurate Galerkin approximations ${\bf z}_\Lambda$ to ${\bf z}$. The, generally, infinite residual ${\bf F}({\bf z}_\Lambda)$ is used as an a posteriori error estimator.
A motivation for the latter is given by the following result.
\begin{lemma} \label{lem12}
For $\|{\bf z}-{\bf \tilde{z}}\|$ sufficiently small, it holds that $\|{\bf F}({\bf \tilde{z}})\| \eqsim \|{\bf z}-{\bf \tilde{z}}\|$.
\end{lemma}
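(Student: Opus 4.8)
The plan is to transport the hypotheses \eqref{12}--\eqref{15} on $F$ to the coordinate map ${\bf F}=\cF F \cF'$ and then run a standard first-order perturbation argument around the root ${\bf z}$. First I would record that, because $\cF \in \Lis(\HH,\ell_2(\vee))$ and $\cF' \in \Lis(\ell_2(\vee),\HH')$ are isomorphisms, the map ${\bf F}$ inherits from $F$ that it is continuously differentiable in a neighbourhood of ${\bf z}$, with $D{\bf F}({\bf z})=\cF\, DF(z)\, \cF'$. Composing the three isomorphisms from \eqref{15} gives $D{\bf F}({\bf z}) \in \Lis(\ell_2(\vee),\ell_2(\vee))$; in particular it is boundedly invertible, so that $\|D{\bf F}({\bf z}){\bf e}\| \eqsim \|{\bf e}\|$ for all ${\bf e} \in \ell_2(\vee)$, with constants $\|D{\bf F}({\bf z})\|$ and $\|D{\bf F}({\bf z})^{-1}\|^{-1}$. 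I would also use ${\bf F}({\bf z})=0$, which is the coordinate form of \eqref{13}.

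Next, writing ${\bf e}:={\bf \tilde{z}}-{\bf z}$ and keeping $\|{\bf e}\|$ small enough that the whole segment ${\bf z}+t{\bf e}$, $t\in[0,1]$, lies inside the ball on which $D{\bf F}$ is continuous, I would apply the fundamental theorem of calculus to obtain
$$
{\bf F}({\bf \tilde{z}})={\bf F}({\bf \tilde{z}})-{\bf F}({\bf z})=D{\bf F}({\bf z}){\bf e}+\Big[\int_0^1 \big(D{\bf F}({\bf z}+t{\bf e})-D{\bf F}({\bf z})\big)\,dt\Big]{\bf e}.
$$
Setting $\delta(\rho):=\sup_{0\le t\le 1}\|D{\bf F}({\bf z}+t{\bf e})-D{\bf F}({\bf z})\|$ for $\|{\bf e}\|=\rho$, continuity of $D{\bf F}$ at ${\bf z}$ gives $\delta(\rho)\to 0$ as $\rho\downarrow 0$, and the remainder term is bounded in norm by $\delta(\|{\bf e}\|)\,\|{\bf e}\|$.

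Finally I would split the two-sided estimate with the triangle inequality: the upper bound reads $\|{\bf F}({\bf \tilde{z}})\| \le (\|D{\bf F}({\bf z})\|+\delta(\|{\bf e}\|))\,\|{\bf e}\|$, and the lower bound reads $\|{\bf F}({\bf \tilde{z}})\| \ge (\|D{\bf F}({\bf z})^{-1}\|^{-1}-\delta(\|{\bf e}\|))\,\|{\bf e}\|$. Choosing $\|{\bf e}\|$ small enough that $\delta(\|{\bf e}\|)\le \tfrac12\|D{\bf F}({\bf z})^{-1}\|^{-1}$ makes the lower constant positive, and the claimed equivalence $\|{\bf F}({\bf \tilde{z}})\| \eqsim \|{\bf z}-{\bf \tilde{z}}\|$ follows. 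The only real obstacle is a bookkeeping one rather than a conceptual one: ensuring that ``$\|{\bf z}-{\bf \tilde{z}}\|$ sufficiently small'' simultaneously places ${\bf \tilde{z}}$ (and the connecting segment) inside the domain where ${\bf F}$ is $C^1$ and drives the remainder below the coercivity constant of $D{\bf F}({\bf z})$; a convex ball around ${\bf z}$ inside that neighbourhood handles both at once.
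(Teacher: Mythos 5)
Your proof is correct and follows essentially the same route as the paper's: linearize at the root, use that the derivative is an isomorphism, and absorb the Taylor remainder for $\|{\bf z}-{\bf \tilde{z}}\|$ small. The only (cosmetic) difference is that the paper pulls ${\bf F}$ back to $\HH$ via the Riesz-basis isomorphisms and uses the $o(\|\tilde z-z\|_\HH)$ form of differentiability of $F$ at $z$, whereas you push everything forward to $\ell_2(\vee)$ and use the integral form of the remainder; both rest on exactly the hypotheses \eqref{13}--\eqref{15}.
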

\begin{proof}
With $\tilde{z}={\bf \tilde{z}}^\top \Psi$, it holds that $\|{\bf F}({\bf \tilde{z}})\|\eqsim \|F(\tilde{z})\|_{\HH'}$. From \eqref{13}-\eqref{14}, we have
$F(\tilde{z})=DF(z)(\tilde{z}-z)+o(\|\tilde{z}-z\|_\HH)$.
The proof is completed by $\|DF(z)(\tilde{z}-z)\|_{\HH'} \eqsim \|\tilde{z}-z\|_\HH$ by \eqref{15}.
\end{proof}

This a posteriori error estimator guides  an appropriate enlargement of the current set $\Lambda$ using a bulk chasing strategy, so that the sequence of approximations converge with the best possible rate to ${\bf z}$. To arrive at an implementable method, that is even of optimal computational complexity, both the Galerkin solution and its residual are 
allowed to be computed inexactly within sufficiently small relative tolerances.

\begin{algorithm}[{\bf awgm}] \label{coordinates} \mbox{}
\begin{tabbing}
\quad\=\% Let $0 < \mu_0 \leq \mu_1< 1$, $\delta, \gamma>0$ be constants, $\Lambda_0 \subset \vee$ be admissible,\\
\> \%  and ${\bf z}_{\Lambda_0} \in \ell_2(\Lambda_0)$. Let ${\bf Z}$ be a neighborhood of ${\bf z} \in \ell_2(\vee)$.\\
\rule{0pt}{5mm} \>{\em \texttt{fo}}\={\em \texttt{r}} $i=0,1,\ldots$ {\em \texttt{do}} \\
\rule{0pt}{5mm}\>\> {\rm (R)} $\zeta:=\frac{2\delta}{1+\delta} \|{\bf r}_{i-1}\|$.\quad \% {\small {\rm(}Read $\|{\bf r}_{-1}\|$ as some scalar $\eqsim \|{\bf z}\|$.{\rm)}}
\\
\>\> {\em \texttt{do}}  $\zeta:=\zeta/2$; Compute ${\bf r}_i\in \ell_2(\vee)$ such that $\|{\bf r}_i-{\bf F}({\bf z}_{\Lambda_i})\| \leq \zeta$.\\ 
\>\> {\em \texttt{until}} $\zeta \leq \frac{\delta}{1+\delta} \|{\bf r}_i\|$.\\
\rule{0pt}{5mm}\> \> {\rm (B)} Determine  an admissible $\Lambda_{i+1} \supset \Lambda_i$ with $\|{\bf r}_i|_{\Lambda_{i+1}}\| \geq \mu_0 \|{\bf r}_i\|$ such that \\
\>\> $\#(\Lambda_{i+1} \setminus \Lambda_i) \lesssim \#(\tilde{\Lambda}\setminus \Lambda_i)$ for any admissible $\tilde{\Lambda} \supset \Lambda_i$ with
$\|{\bf r}_i|_{\tilde{\Lambda}}\| \geq \mu_1 \|{\bf r}_i\|$.\\
\rule{0pt}{5mm}\>\> {\rm (G)} Compute ${\bf z}_{\Lambda_{i+1}} \in \ell_2(\Lambda_{i+1}) \cap {\bf Z}$ with $\|{\bf F}({\bf z}_{\Lambda_{i+1}})|_{\Lambda_{i+1}}\| \leq \gamma \|{\bf r}_i\|$.\\
\rule{0pt}{5mm}\> {\em \texttt{endfor}}
 \end{tabbing}
\end{algorithm}

In step (R), by means of a loop in which an absolute tolerance is decreased, the true {\em residual} ${\bf F}({\bf z}_{\Lambda_i})$ is approximated within a relative tolerance $\delta$.
In step (B), {\em bulk chasing} is performed on the approximate residual. The idea is to find a smallest admissible $\Lambda_{i+1} \supset \Lambda_i$ with $\|{\bf r}_i|_{\Lambda_{i+1}}\| \geq \mu_0 \|{\bf r}_i\|$. In order to be able to find an implementation that is of linear complexity, the condition of having a truly smallest $\Lambda_{i+1}$ has been relaxed.
Finally, in step (G), a sufficiently accurate approximation of the {\em Galerkin} solution w.r.t. the new set $\Lambda_{i+1}$ is determined.

{\em Convergence} of the adaptive wavelet Galerkin method, with the {\em best possible rate}, is stated in the following theorem.

\begin{theorem}[{\cite[Thm.~3.9]{249.94}}]   \label{th2} Let $\mu_1, \gamma, \delta$, 
$\inf_{{\bf v}_{\Lambda_0} \in \ell_2(\Lambda_0)}\|{\bf z}-{\bf v}_{\Lambda_0}\|$, $\|{\bf F}({\bf z}_{\Lambda_0})|_{\Lambda_0}\|$, and the neighborhood ${\bf Z}$ of the solution ${\bf z}$ all be sufficiently small. Then, for some $\alpha=\alpha[\mu_0]<1$, the sequence $({\bf z}_{\Lambda_i})_i$ produced by {\bf awgm} satisfies
$$
\|{\bf z}-{\bf z}_{\Lambda_{i}}\| \lesssim \alpha^{i} \|{\bf z}-{\bf z}_{\Lambda_0}\|.
$$
If, for whatever $s>0$,  ${\bf z} \in \cA^s$, then $\# (\Lambda_{i+1} \setminus \Lambda_0) \lesssim  \|{\bf z}-{\bf z}_{\Lambda_{i}}\|^{-1/s}$.
\end{theorem}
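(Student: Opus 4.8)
The plan is to treat the nonlinear problem as a small perturbation of its linearisation at the root and thereby reduce to the now-classical analysis of the linear symmetric positive definite case. By \eqref{14}--\eqref{15} the coordinate operator ${\bf F}=\cF F \cF'$ satisfies ${\bf F}({\bf \tilde z})={\bf A}({\bf \tilde z}-{\bf z})+o(\|{\bf \tilde z}-{\bf z}\|)$ near ${\bf z}$, where ${\bf A}:=\cF DF(z)\cF'$ is bounded, boundedly invertible, symmetric and positive definite, with spectral bounds $\lambda_{\min}\le\lambda_{\max}$ and condition number $\kappa:=\lambda_{\max}/\lambda_{\min}$; throughout, $\|\cdot\|_{\bf A}$ denotes the induced energy norm. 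The smallness hypotheses on the initial error, the initial residual and the neighbourhood ${\bf Z}$ serve to guarantee that every iterate ${\bf z}_{\Lambda_i}$ stays in a region where this linearisation is accurate and where Lemma~\ref{lem12} applies, so that the (approximate) residual is uniformly equivalent to the error; the role of the smallness of $\delta$ and $\gamma$ is to make the inexactness of steps (R) and (G) negligible relative to the error.

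\emph{Linear convergence.} First I would establish the per-step contraction in the exact, purely linear model. Writing $e_i:={\bf z}-{\bf z}_{\Lambda_i}$, Galerkin orthogonality in $\|\cdot\|_{\bf A}$ gives the Pythagorean identity $\|e_{i+1}\|_{\bf A}^2=\|e_i\|_{\bf A}^2-\|{\bf z}_{\Lambda_{i+1}}-{\bf z}_{\Lambda_i}\|_{\bf A}^2$. Since ${\bf F}({\bf z}_{\Lambda_{i+1}})|_{\Lambda_{i+1}}=0$ and ${\bf r}_i={\bf A}e_i$, the restricted residual equals ${\bf A}({\bf z}_{\Lambda_{i+1}}-{\bf z}_{\Lambda_i})|_{\Lambda_{i+1}}$, whence a Cauchy--Schwarz estimate in $\|\cdot\|_{\bf A}$ yields $\|{\bf z}_{\Lambda_{i+1}}-{\bf z}_{\Lambda_i}\|_{\bf A}\ge\lambda_{\max}^{-1/2}\|{\bf r}_i|_{\Lambda_{i+1}}\|$. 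The bulk criterion $\|{\bf r}_i|_{\Lambda_{i+1}}\|\ge\mu_0\|{\bf r}_i\|$ together with $\|{\bf r}_i\|\ge\lambda_{\min}^{1/2}\|e_i\|_{\bf A}$ then gives $\|{\bf z}_{\Lambda_{i+1}}-{\bf z}_{\Lambda_i}\|_{\bf A}\ge\mu_0\kappa^{-1/2}\|e_i\|_{\bf A}$, and substituting into the Pythagorean identity produces $\|e_{i+1}\|_{\bf A}\le\alpha\|e_i\|_{\bf A}$ with $\alpha=\sqrt{1-\mu_0^2\kappa^{-1}}<1$. I would then reinstate the nonlinear remainder and the step (R)/(G) tolerances, absorbing them by the triangle inequality into a slightly larger but still subunital rate; the norm equivalence $\|\cdot\|_{\bf A}\eqsim\|\cdot\|$ converts this into the asserted bound $\|{\bf z}-{\bf z}_{\Lambda_i}\|\lesssim\alpha^i\|{\bf z}-{\bf z}_{\Lambda_0}\|$.

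\emph{Optimality.} For the complexity bound I would prove the converse, \emph{quasi-optimality}, estimate for bulk chasing: if $\bar\Lambda\supset\Lambda_i$ is admissible and its Galerkin solution satisfies $\|{\bf z}-{\bf z}_{\bar\Lambda}\|_{\bf A}\le\beta\|e_i\|_{\bf A}$ with $\beta=\sqrt{1-\mu_1^2\kappa}$, then reversing the chain above via $\|{\bf z}_{\bar\Lambda}-{\bf z}_{\Lambda_i}\|_{\bf A}\le\lambda_{\min}^{-1/2}\|{\bf r}_i|_{\bar\Lambda}\|$ gives $\|{\bf r}_i|_{\bar\Lambda}\|\ge\sqrt{\kappa^{-1}(1-\beta^2)}\,\|{\bf r}_i\|=\mu_1\|{\bf r}_i\|$; this is precisely where the hypothesis that $\mu_1$ be sufficiently small, namely $\mu_1<\kappa^{-1/2}$, enters. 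Given ${\bf z}\in\cA^s$, for each $i$ there is an admissible $\Lambda^*$ with $\#\Lambda^*\lesssim\|{\bf z}\|_{\cA^s}^{1/s}\|e_i\|^{-1/s}$ realising a best approximation below a fixed multiple of $\|e_i\|$; taking $\bar\Lambda=\Lambda^*\cup\Lambda_i$ (admissible by the union property) and using quasi-best approximation of the Galerkin solution makes $\bar\Lambda$ capture bulk $\mu_1$, so the quasi-minimality built into step (B) forces $\#(\Lambda_{i+1}\setminus\Lambda_i)\lesssim\#(\bar\Lambda\setminus\Lambda_i)\le\#\Lambda^*\lesssim\|e_i\|^{-1/s}$. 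Finally, telescoping $\#(\Lambda_{i+1}\setminus\Lambda_0)=\sum_{j=0}^i\#(\Lambda_{j+1}\setminus\Lambda_j)$ and using the per-step contraction in the form $\|e_j\|\ge\alpha^{-(i-j)}\|e_i\|$ to dominate the sum by a convergent geometric series in $\alpha^{1/s}$ yields $\#(\Lambda_{i+1}\setminus\Lambda_0)\lesssim\|{\bf z}-{\bf z}_{\Lambda_i}\|^{-1/s}$.

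The main obstacle I anticipate is the optimality half, and within it the robust transfer of the two Cauchy--Schwarz/quasi-optimality estimates from the clean linear symmetric model to the nonlinear, inexactly computed iteration. Because Lemma~\ref{lem12} furnishes the error/residual equivalence only \emph{locally} and only \emph{up to constants}, I must ensure that all iterates remain in the neighbourhood where the linearisation and this equivalence are valid, that the Galerkin problems are uniquely solvable there, and that the perturbation incurred by the nonlinear remainder and by the relative tolerances $\delta,\gamma$ does not breach the strict inequality $\mu_1<\kappa^{-1/2}$ on which the bulk-capture argument rests. Controlling this interplay—showing that ${\bf Z}$ is effectively invariant under the iteration and that the degraded constants stay on the correct side of the thresholds—is the delicate bookkeeping that the smallness assumptions in the hypothesis are designed to secure.
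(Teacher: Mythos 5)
Your sketch is essentially the argument behind the paper's proof: the paper does not prove Theorem~\ref{th2} itself but quotes it from \cite[Thm.~3.9]{249.94}, and the proof there is exactly the route you describe, namely the linear SPD contraction via Galerkin orthogonality/Pythagoras plus bulk chasing, the quasi-optimality lemma for step (B) under a condition of the form $\mu_1<\kappa^{-1/2}$ going back to \cite{75.36}, a geometric-series telescoping of $\#(\Lambda_{j+1}\setminus\Lambda_j)$, and a transfer to the nonlinear, inexactly solved setting by local perturbation arguments in a neighborhood where $\|{\bf F}({\bf \tilde{z}})\| \eqsim \|{\bf z}-{\bf \tilde{z}}\|$ (Lemma~\ref{lem12}) and Galerkin solutions exist uniquely (\cite{243.4}). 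The ``delicate bookkeeping'' you defer is precisely what the smallness assumptions on $\delta$, $\gamma$, $\mu_1$, ${\bf Z}$ and the initial data are used for in \cite{249.94}, so your plan is sound and coincides in approach with the cited proof.
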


The \emph{computation} of the approximate Galerkin solution ${\bf z}_{\Lambda_{i+1}}$ can be implemented by performing the simple fixed point iteration
\begin{equation} \label{fixedpoint}
{\bf z}_{\Lambda_{i+1}}^{(j+1)}={\bf z}_{\Lambda_{i+1}}^{(j)}-\omega {\bf F}({\bf z}_{\Lambda_{i+1}}^{(j)})|_{\Lambda_{i+1}}.
\end{equation}
Taking $\omega>0$ to be a sufficiently small constant and starting with ${\bf z}_{\Lambda_{i+1}}^{(0)}={\bf z}_{\Lambda_{i}}$, a fixed number of iterations suffices to meet the condition $\|{\bf F}({\bf z}^{(j+1)}_{\Lambda_{i+1}})|_{\Lambda_{i+1}}\| \leq \gamma \|{\bf r}_i\|$. This holds also true when each of the ${\bf F}()|_{\Lambda_{i+1}}$ evaluations is performed within an absolute tolerance that is a sufficiently small fixed multiple of $\|{\bf r}_i\|$.

Optimal {\em computational} complexity of the {\bf awgm} --meaning that the work to obtain an approximation within a given tolerance $\eps>0$ can be bounded on some constant multiple of the bound on its support length from Thm.~\ref{th2},--  is guaranteed under the following two conditions concerning the cost of  the ``bulk chasing'' process, and that of the approximate residual evaluation, respectively.
Indeed, apart from some obvious computations, these are the only two tasks that have to be performed in {\bf awgm}.

\begin{condition}\label{bulk}
The determination of $\Lambda_{i+1}$ in Algorithm~\ref{coordinates} is performed in ${\mathcal O}(\# \supp {\bf r}_i+\# \Lambda_i)$ operations.
\end{condition}

In case of unconstrained approximation, i.e., any finite $\Lambda \subset \vee$ is admissible, this condition is satisfied by collecting the largest entries in modulus of ${\bf r}_i$,    where, to avoid a suboptimal complexity, an exact sorting should be replaced by an approximate sorting based on binning.
With tree approximation, the condition is satisfied by the application of the so-called {\em Thresholding Second Algorithm} from \cite{20.5}. We refer to \cite[\S3.4]{249.94} for a discussion.

To understand the second condition, that in the introduction was referred to as the cost condition \eqref{condition},
note that inside the {\bf awgm} it is never needed to approximate a residual more accurately than within a sufficiently small, but fixed relative tolerance.

\begin{condition}\label{nonlineareval}
For a sufficiently small, fixed $\varsigma>0$, there exists a neighborhood ${\bf Z}$ of the solution ${\bf z}$ of ${\bf F}({\bf z})=0$, such that for all 
admissible $\Lambda \subset \vee$, ${\bf \tilde{z}}\in \ell_2(\Lambda) \cap {\bf Z}$, and any $\eps>0$, 
there exists an ${\bf r} \in \ell_2(\vee)$ with 
$$\|{\bf F}({\bf \tilde{z}})-{\bf r}\|\leq \varsigma \|{\bf F}({\bf \tilde{z}})\|+ \eps,
$$
that one can compute in ${\mathcal O}(\eps^{-1/s}+\#\Lambda)$ operations.
Here $s>0$ is such that ${\bf z} \in \cA^s$.
\end{condition}

Under both conditions, the {\bf awgm} has optimal computational complexity:

\begin{theorem} \label{optcomput} In the setting of Theorem~\ref{th2}, and under Conditions~\ref{bulk} and \ref{nonlineareval}, not only $\# {\bf z}_{\Lambda_i}$, but also the number of arithmetic operations required by {\bf awgm} for the computation of  ${\bf z}_{\Lambda_i}$
is ${\mathcal O}(\|{\bf z}-{\bf z}_{\Lambda_i}\|^{-1/s})$.
\end{theorem}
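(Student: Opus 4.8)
Under the setting of Theorem~\ref{th2}, together with Conditions~\ref{bulk} and~\ref{nonlineareval}, the number of arithmetic operations needed by {\bf awgm} to compute ${\bf z}_{\Lambda_i}$ is ${\mathcal O}(\|{\bf z}-{\bf z}_{\Lambda_i}\|^{-1/s})$.

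The plan is to reduce the cost bound to a support-size bound that is already furnished by Theorem~\ref{th2}. First I would observe that all work inside {\bf awgm} decomposes into three contributions per iteration: the bulk-chasing step (B), the residual approximations in step (R), and the Galerkin solve (G). By Condition~\ref{bulk}, step (B) costs ${\mathcal O}(\#\supp {\bf r}_i + \#\Lambda_i)$, and since $\Lambda_i$ is nested and $\#\Lambda_i \lesssim \|{\bf z}-{\bf z}_{\Lambda_{i-1}}\|^{-1/s} \lesssim \|{\bf z}-{\bf z}_{\Lambda_i}\|^{-1/s}$ by Theorem~\ref{th2} combined with the linear convergence $\|{\bf z}-{\bf z}_{\Lambda_i}\| \lesssim \alpha^i\|{\bf z}-{\bf z}_{\Lambda_0}\|$, the term $\#\Lambda_i$ is controlled. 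The support $\#\supp{\bf r}_i$ is controlled through Condition~\ref{nonlineareval}, which produces a residual approximation on ${\mathcal O}(\eps^{-1/s}+\#\Lambda_i)$ degrees of freedom; I will choose $\eps$ proportionally to $\|{\bf r}_{i-1}\|$ so that $\eps^{-1/s} \eqsim \|{\bf z}-{\bf z}_{\Lambda_i}\|^{-1/s}$ as well.

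The core of the argument concerns step (R). The inner loop halves $\zeta$ until the relative criterion $\zeta \leq \frac{\delta}{1+\delta}\|{\bf r}_i\|$ is met; I would show this loop terminates after a number of iterations bounded \emph{independently of $i$}. The key point is that at termination we have $\|{\bf r}_i\| \eqsim \|{\bf F}({\bf z}_{\Lambda_i})\| \eqsim \|{\bf z}-{\bf z}_{\Lambda_i}\|$ by Lemma~\ref{lem12}, while by the stopping rule of the previous iteration $\|{\bf r}_{i-1}\| \eqsim \|{\bf z}-{\bf z}_{\Lambda_{i-1}}\|$; the linear convergence factor $\alpha$ then guarantees that the ratio $\|{\bf r}_i\|/\|{\bf r}_{i-1}\|$ stays bounded below by a fixed constant, so only a fixed number of halvings is required. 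At the final value of $\zeta$ we invoke Condition~\ref{nonlineareval} with tolerance $\eps \eqsim \zeta \eqsim \|{\bf r}_{i-1}\|$, so each residual evaluation costs ${\mathcal O}(\|{\bf r}_{i-1}\|^{-1/s}+\#\Lambda_i) \lesssim \|{\bf z}-{\bf z}_{\Lambda_i}\|^{-1/s}$. A fixed number of such evaluations then keeps the per-iteration cost of (R) at this order. For step (G), by the remark following~\eqref{fixedpoint} a fixed number of fixed-point iterations suffices, each requiring one residual evaluation restricted to $\Lambda_{i+1}$, at the same per-call cost.

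Summing the geometric sequence is the final step. Each of the iterations $0,1,\dots,i$ costs ${\mathcal O}(\|{\bf z}-{\bf z}_{\Lambda_j}\|^{-1/s})$, and since $\|{\bf z}-{\bf z}_{\Lambda_j}\| \gtrsim \alpha^{j-i}\|{\bf z}-{\bf z}_{\Lambda_i}\|$ the costs $\|{\bf z}-{\bf z}_{\Lambda_j}\|^{-1/s} \lesssim \alpha^{(i-j)/s}\|{\bf z}-{\bf z}_{\Lambda_i}\|^{-1/s}$ form a geometric sequence with ratio $\alpha^{1/s}<1$ when summed over $j \le i$, giving total cost ${\mathcal O}(\|{\bf z}-{\bf z}_{\Lambda_i}\|^{-1/s})$, which is exactly the claimed bound (and of the same order as the already-established support bound $\#{\bf z}_{\Lambda_i}$). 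The main obstacle I anticipate is the uniform-in-$i$ bound on the number of passes through the inner \texttt{do}-loop of step (R): one must carefully track that the relative stopping criterion, the equivalence $\|{\bf r}_i\| \eqsim \|{\bf z}-{\bf z}_{\Lambda_i}\|$ from Lemma~\ref{lem12} (valid only in a neighborhood, hence the smallness hypotheses), and the linear contraction together prevent $\|{\bf r}_i\|$ from collapsing relative to $\|{\bf r}_{i-1}\|$, which is what would otherwise force an unbounded number of halvings and spoil linear complexity.
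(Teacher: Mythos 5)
Your overall strategy --- per-iteration accounting of steps (R), (B), (G) via Conditions~\ref{bulk} and~\ref{nonlineareval}, followed by a geometric summation over iterations using the linear convergence from Theorem~\ref{th2} --- is exactly the route of the proof that this paper delegates to \cite{249.94}, and most of your steps are sound. There is, however, one genuine gap, and it sits precisely at the point you yourself flagged as the main obstacle: the claim that the number of passes through the \texttt{do}-loop in step (R) is bounded uniformly in $i$ because ``the linear convergence factor $\alpha$ guarantees that the ratio $\|{\bf r}_i\|/\|{\bf r}_{i-1}\|$ stays bounded below by a fixed constant.'' Linear convergence is an \emph{upper} bound on the error, $\|{\bf z}-{\bf z}_{\Lambda_i}\| \lesssim \alpha^{i-j}\|{\bf z}-{\bf z}_{\Lambda_j}\|$; it gives no \emph{lower} bound on $\|{\bf z}-{\bf z}_{\Lambda_i}\|$ in terms of $\|{\bf z}-{\bf z}_{\Lambda_{i-1}}\|$. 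Nothing prevents an iteration in which $\Lambda_i$ happens to capture ${\bf z}$ unusually well, so that $\|{\bf F}({\bf z}_{\Lambda_i})\| \eqsim \|{\bf z}-{\bf z}_{\Lambda_i}\|$ (Lemma~\ref{lem12}) collapses by an arbitrarily large factor relative to $\|{\bf r}_{i-1}\|$. In that case the number of halvings, which is $\eqsim 1+\log_2\big(\|{\bf r}_{i-1}\|/\|{\bf F}({\bf z}_{\Lambda_i})\|\big)$, is not bounded by a constant, and your per-iteration cost bound does not follow as stated.

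The conclusion nevertheless survives, with a two-part repair of step (R). First, the $\zeta^{-1/s}$-contributions of the successive residual evaluations form a geometric series (the tolerances are halved), hence are dominated by the final call, at which $\zeta \eqsim \|{\bf F}({\bf z}_{\Lambda_i})\| \eqsim \|{\bf z}-{\bf z}_{\Lambda_i}\|$; this part is ${\mathcal O}(\|{\bf z}-{\bf z}_{\Lambda_i}\|^{-1/s})$ regardless of the number of passes. Second, the additive $\#\Lambda_i$-contribution per pass gets multiplied by the number of passes $M_i \lesssim 1+\log\big(\|{\bf z}-{\bf z}_{\Lambda_{i-1}}\|/\|{\bf z}-{\bf z}_{\Lambda_i}\|\big)$; writing $t:=\|{\bf z}-{\bf z}_{\Lambda_{i-1}}\|/\|{\bf z}-{\bf z}_{\Lambda_i}\| \gtrsim 1$ (this lower bound \emph{does} follow from Theorem~\ref{th2} restarted at $\Lambda_{i-1}$) and using $\#\Lambda_i \lesssim \|{\bf z}-{\bf z}_{\Lambda_{i-1}}\|^{-1/s}$ together with the elementary inequality $1+\log t \lesssim_s t^{1/s}$, one gets $M_i\,\#\Lambda_i \lesssim t^{1/s}\,\|{\bf z}-{\bf z}_{\Lambda_{i-1}}\|^{-1/s} = \|{\bf z}-{\bf z}_{\Lambda_i}\|^{-1/s}$. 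Thus the cost of step (R) at iteration $i$ is ${\mathcal O}(\|{\bf z}-{\bf z}_{\Lambda_i}\|^{-1/s})$ even without any uniform bound on $M_i$, and your geometric summation over $j\le i$ then closes the argument unchanged. A smaller bookkeeping point: when invoking Condition~\ref{nonlineareval} you should take $\eps \eqsim \zeta$ at \emph{every} pass of the loop, not $\eps \eqsim \|{\bf r}_{i-1}\|$; the relative term $\varsigma\|{\bf F}({\bf z}_{\Lambda_i})\|$ in that condition is harmless here exactly because the loop never drives $\zeta$ below a fixed multiple of $\|{\bf F}({\bf z}_{\Lambda_i})\|$, provided $\varsigma$ is sufficiently small relative to $\delta$.
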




 

\section{Application to normal equations} \label{Snormal}
As discussed in Sect.~\ref{S1}, we will apply the {\bf awgm} to the (nonlinear) normal equations $DQ(u,\theta)=0$, with $DQ$ from \eqref{7}. That is, we apply the findings collected in the previous section for
the general triple $(F,\HH,z)$ now reading as $(DQ,\UU \times \PP,(u,\theta))$.

For $\Psi^\UU=\{\psi_\lambda^\UU\colon \lambda \in \vee_\UU\}$ and $\Psi^\PP=\{\psi_\lambda^\PP \colon \lambda \in \vee_\PP\}$ being Riesz bases
for $\UU$ and $\PP$, respectively, we equip $\UU \times \PP$ with Riesz basis
\begin{equation} \label{defpsi}
\Psi=\{\psi_\lambda\colon \lambda \in \vee:=\vee_\UU \cup \vee_\PP\}:=(\Psi^\UU,0_\PP) \cup (0_\UU,\Psi^\PP)
\end{equation}
(w.l.o.g. we assume that $\vee_\UU \cap \vee_\PP=\emptyset$).
With $\cF \in \Lis(\UU \times \PP,\ell_2(\vee))$ being the corresponding analysis operator, and $D{\bf Q}:=\cF DQ \cF'$,
for  $[{\bf \tilde{u}}^\top\!,\bm{\tilde{\theta}}^\top]^\top \in  \ell_2(\vee)$, and with $(\tilde{u},\tilde{\theta}):=[{\bf \tilde{u}}^\top\!,\bm{\tilde{\theta}}^\top]\Psi$, we have
\begin{equation} \label{3}
\begin{split}
D{\bf Q}([{\bf \tilde{u}}^\top\!,\bm{\tilde{\theta}}^\top]^\top) 
=&\left[\begin{array}{@{}c@{}}  
DG_0(\tilde{u}) (\Psi^\UU)(\Psi^\VV)^\top \\
G_1 (\Psi^\PP)(\Psi^\VV)^\top
\end{array} \right] 
\big(
G_0(\tilde{u})+ G_1 \tilde{\theta}
\big)
(\Psi^\VV)
\\
&+ 
\left\langle 
\left[\begin{array}{@{}c@{}}  
- G_2(\Psi^\UU) \\
\Psi^\PP
\end{array} \right],\tilde{\theta}-G_2 \tilde{u}\right\rangle_\PP.
\end{split}
\end{equation}

In this setting, using Lemma~\ref{lem12},  Condition~\ref{nonlineareval} can be reformulated as follows:
\begin{modcondition} \label{nonlineareval*}
For a sufficiently small, fixed $\varsigma>0$, there exists a neighborhood of the solution $(u,\theta)$ of $DQ(u,\theta)=0$, such that for all 
admissible $\Lambda \subset \vee$, all $[{\bf \tilde{u}}^\top\!,\bm{\tilde{\theta}}^\top]^\top \in \ell_2(\Lambda)$, with $(\tilde{u},\tilde{\theta}):=[{\bf \tilde{u}}^\top\!,\bm{\tilde{\theta}}^\top] \Psi$ being in this neighborhood, and any $\eps>0$,
there exists an ${\bf r} \in \ell_2(\vee)$ with 
$$\|D{\bf Q}([{\bf \tilde{u}}^\top\!,\bm{\tilde{\theta}}^\top]^\top)-{\bf r}\|\leq \varsigma (\|u-\tilde{u}\|_\UU+\|\theta-\tilde{\theta}\|_\PP) +
\eps,
$$
that one can compute in ${\mathcal O}(\eps^{-1/s}+\#\Lambda)$ operations, where 
$s>0$ is such that $[{\bf u}^\top\!, {\bf \theta}^\top]^\top \in \cA^s$.
\end{modcondition}

To verify this condition, we will use the additional property, i.e. on top of \eqref{38}--\eqref{41}, that
$\|u-\tilde{u}\|_\UU+\|\theta-\tilde{\theta}\|_\PP \eqsim \|G_0(\tilde{u})-G_1\tilde{\theta}\|_{\VV'}+\|\tilde{\theta}-G_2\tilde{u}\|_{\PP}$, which is provided by Lemma~\ref{lem2}.

\section{Semi-linear 2nd order elliptic PDE} \label{Selliptic}
We apply the solution method outlined in Sect.~\ref{S1}-\ref{Snormal} to the example of a semi-linear 2nd order elliptic PDE with general (inhomogeneous) boundary conditions.
The main task will be to verify Condition~\ref{nonlineareval*}.
\subsection{Reformulation as a first order system least squares problem} \label{Sreformulation}
Let $\Omega \subset \R^n$ be a bounded domain, $\Gamma_N \cup \Gamma_D=\partial\Omega$ with $\meas(\Gamma_N \cap \Gamma_D)=0$, 
$\meas(\Gamma_D)>0$ when $\Gamma_D \neq \emptyset$,
and $A:\Omega \rightarrow \R^{n\times n}_{\rm symm}$ with $\xi^\top A(\cdot) \xi \eqsim \|\xi\|^2$ ($\xi \in \R^n$, a.e.).
We set
$$
\framebox{$\UU:=H^1(\Omega)$} \,\,\,\text{or, in case } \meas(\Gamma_D)=0, \text{ possibly } \framebox{$\UU:=H^1(\Omega)/\R$},
$$
and
$$\framebox{$\VV=\VV_1\times \VV_2 :=\{u \in \UU\colon u|_{\Gamma_D}=0\} \times H^{-\frac{1}{2}}(\Gamma_D)$}.
$$
For $N:\UU \supset \dom(N) \rightarrow \VV_1'$, $f \in  \VV_1'$, $g \in H^{\frac{1}{2}}(\Gamma_D)$, and $h \in H^{-\frac{1}{2}}(\Gamma_N)$, we consider 
the semi-linear boundary value problem
\begin{equation} \label{bvp}
 \left\{
\begin{array}{r@{}c@{}ll}
-\divv A \grad u+ N(u) &\,\,=\,\,& f &\text{ on } \Omega,\\
u &\,\,=\,\,& g &\text{ on } \Gamma_D,\\
A \grad u \cdot {\bf n} &\,\,=\,\,& h &\text{ on } \Gamma_N,
\end{array}
\right.
\end{equation}
that in standard variational form reads as finding $u \in \UU$ such that
$$
(Gu)(v):=\int_\Omega A \grad u \cdot \grad v_1 + (N(u)-f) v_1 dx-\int_{\Gamma_N} h v_1 \,ds +\int_{\Gamma_D} (u-g)v_2 \,ds=0
$$
$(v=(v_1,v_2) \in \VV)$.

We assume that this variational problem has a solution $u$, and that $G$, i.e., $N$, is two times continuously Fr\'{e}chet differentiable in a neighborhood of $u$, and $DG(u) \in \cL(\UU,\VV')$ is a homeomorphism with its range, i.e., we {\em assume} that
$$
G:\UU \supset \dom(G) \rightarrow \VV' \text{ satisfies } \eqref{r1}-\eqref{r3} 
$$
formulated in Sect.~\ref{S1}.
\begin{remarks}
Because $\UU \rightarrow H^{\frac{1}{2}}(\Gamma_D)\colon u \mapsto u|_{\partial\Omega}$ is surjective, 
from \cite[Thm.~2.1]{249.96} it follows that condition \eqref{r3} is satisfied when
$$
L:= w \mapsto \Big(v_1 \mapsto \int_\Omega A \grad w \cdot \grad v_1+ DN(u)(w) v_1\,dx\Big) \in \Lis(\VV_1,\VV_1') 
$$
(actually, $L$ being a homeomorphism with its range is already sufficient).

By writing $g=u_0|_{\Gamma_D}$ for some $u_0 \in \UU$, one infers that 
for linear $N$, existence of a (unique) solution $u$, i.e. \eqref{r1}, follows from $L \in \Lis(\VV_1,\VV_1')$.
For $g=0$, the conditions of $N$ being monotone and locally Lipschitz are sufficient for having a (unique) solution $u$.
Relaxed conditions on $N$ suffice to have a (locally unique) solution. We refer to \cite{18.68}.
\end{remarks}

Using the framework outlined in Sect.~\ref{S1}, we write this second order elliptic PDE as a first order system least squares problem.
Putting \framebox{$\PP=L_2(\Omega)^n$}, we define
$$
G_1 \in \cL(\PP,\VV'),\qquad G_2 \in \cL(\UU,\PP),
$$
by
$$
G_2 u=A \grad u,\qquad (G_1 \vec{\theta})(v_1,v_2)=\int_\Omega \vec{\theta} \cdot \grad v_1 \,d x.
$$

The results from Sect.~\ref{S1} show that the solution $u$ can be found as the first component of the minimizer $(u,\vec{\theta}) \in  \UU \times \PP$ of
\begin{equation}  \label{def_Q}
\begin{split}
Q(u,\vec{\theta}):={\textstyle \frac{1}{2}}\Big(&\big\|v_1 \mapsto 
\int_\Omega \vec{\theta} \cdot \grad v_1  +(N(u)-f)v_1\,dx  -\int_{\Gamma_N} v_1 h \,ds
\big\|_{\VV_1'}^2+\\
&\|\vec{\theta}-A \grad u\|_{L_2(\Omega)^n}^2+\|u-g\|^2_{\VV_2'}\Big),
\end{split}
\end{equation}
being the solution of the normal equations $DQ(u,\vec{\theta})=0$, and furthermore, 
that these normal equations are well-posed in the sense that they satisfy \eqref{38}--\eqref{41}.

To deal with the `unpractical'  norm on $\VV'$, as in Sect.~\ref{Sseemingly}, at the end of Sect.~\ref{S1}, and in Sect.~\ref{Snormal}, we equip $\VV_1$ and $\VV_2$
 with wavelet {\em Riesz bases}
$$
\Psi^{\VV_1}=\{\psi_\lambda^{\VV_1}\colon\lambda \in \vee_{\VV_1}\}, \quad \Psi^{\VV_2}=\{\psi_\lambda^{\VV_2}\colon\lambda \in \vee_{\VV_2}\},
$$
and replace, in the definition of $Q$,  the norms on their duals by the equivalent norms defined by
$\|g(\Psi^{\VV_1})\|$ or $\|g(\Psi^{\VV_2})\|$, for $g \in \VV_1'$ or $g \in \VV_2'$, respectively.

Next, after equiping $\UU$ and $\PP$ with {\em Riesz bases} 
$$
\Psi^{\UU}=\{\psi_\lambda^{\UU}\colon\lambda \in \vee_{\UU}\}, \quad \Psi^{\PP}=\{\psi_\lambda^{\PP}\colon\lambda \in \vee_{\PP}\},
$$
and so $\UU \times \PP$ with $\Psi=(\Psi^{\UU},0_{\PP}) \cup (0_{\UU},\Psi^{\PP})$, we apply the {\bf awgm} to the resulting system 
\[
\begin{split}
 &D{\bf Q}([{\bf {u}}^\top\!,\bm{{\theta}}^\top]^\top)\!=\!
\Big\langle \!\!
\left[\begin{array}{@{}c@{}}  
A \grad \Psi^{\UU}
 \\
-\Psi^{\PP}
\end{array} \right] \!\!
,\!A \grad {u} \!-\!\vec{{\theta}}\Big\rangle_{L_2(\Omega)^n} 
\!\!\!\!+\!\!
\left[\begin{array}{@{}c@{}}  
\langle \Psi^{\UU}\!, \! \Psi^{\VV_2} \rangle_{L_2(\Gamma_D)}
 \\
0_{\vee_\PP}
\end{array} \right] \!\!
\langle  \Psi^{\VV_2}, {u}\!-\!g\rangle_{L_2(\Gamma_D)}+\\
&\left[\!\begin{array}{@{}c@{}}  
\langle DN({u}) \Psi^{\UU}\!, \! \Psi^{\VV_1} \rangle_{L_2(\Omega)}
 \\
\langle \Psi^{\PP},\grad \Psi^{\VV_1} \rangle_{L_2(\Omega)^n}
\end{array}\! \right] \!\!
\Big\{\!
\big\langle \Psi^{\VV_1}\!,\! N({u})\!-\!f\big\rangle_{L_2(\Omega)}\!\!-\!\big\langle \Psi^{\VV_1}\!,\!h \big\rangle_{L_2(\Gamma_N)}
\!\!+\!\!\langle \nabla \Psi^{\VV_1}\!,\! \vec{{\theta}} \rangle_{L_2(\Omega)^n}
\!\!\Big\}=0,
\end{split}
\]
where $(u,\vec{{\theta}}):=[{\bf {u}}^\top\!,\bm{{\theta}}^\top] \Psi$.

To express the three terms in $v\mapsto \langle v, N({u})-f\rangle_{L_2(\Omega)}-\langle v,h \rangle_{L_2(\Gamma_N)}
+\langle \nabla v, \vec{{\theta}} \rangle_{L_2(\Omega)^n} \in \VV_1'$ w.r.t. one dictionary of functions on $\Omega$ and one dictionary of functions on $\Gamma_N$, similarly to Sect.~\ref{Sseemingly} we impose the additional, but in applications easily realisable condition that
\begin{equation} \label{8}
\Psi^{\PP} \subset H(\divv;\Omega).
\end{equation}
Then for finitely supported approximations $[{\bf \tilde{u}}^\top\!,\bm{\tilde{\theta}}^\top]^\top$ to $[{\bf {u}}^\top\!,\bm{{\theta}}^\top]^\top$, for $(\tilde{u},\vec{\tilde{\theta}}):=[{\bf \tilde{u}}^\top\!,\bm{\tilde{\theta}}^\top] \Psi \in \UU \times H(\divv;\Omega)$, 
we have
\begin{equation} \label{29}
\framebox{$
\begin{split}
 &\!\!D{\bf Q}([{\bf {\tilde{u}}}^\top\!,\!\bm{\tilde{\theta}}^\top]^\top)\!=\!
\Big\langle \!
\left[\!\begin{array}{@{}c@{}}  
A \grad \Psi^{\UU}
 \\
-\Psi^{\PP}
\end{array} \!\right] \!
,\!A \grad {\tilde{u}} \!-\!\vec{\tilde{\theta}}\Big\rangle_{L_2(\Omega)^n} 
\!\!\!\!+\!\!
\left[\begin{array}{@{}c@{}}  
\!\langle \Psi^{\UU}\!,\!  \Psi^{\VV_2} \rangle_{L_2(\Gamma_D)}
 \\
0_{\vee_\PP}
\end{array} \!\right] \!\!
\langle  \Psi^{\VV_2}\!,\! {\tilde{u}}\!-\!g\rangle_{L_2(\Gamma_D)}+\\
&\!\!\left[\!\begin{array}{@{}c@{}}  
\langle DN({\tilde{u}}) \Psi^{\UU},  \Psi^{\VV_1} \rangle_{L_2(\Omega)}
 \\
\langle \Psi^{\PP},\grad \Psi^{\VV_1} \rangle_{L_2(\Omega)^n}
\end{array}\! \right] \!\!
\Big\{\!
\big\langle \Psi^{\VV_1}, N({\tilde{u}})\!-\!f\!-\!\divv \vec{\tilde{\theta}} \big\rangle_{L_2(\Omega)}\!\!+\!\big\langle \Psi^{\VV_1},\vec{\tilde{\theta}}\cdot{\bf n}-h \big\rangle_{L_2(\Gamma_N)}
\!\!\Big\},
\end{split}
$}\hspace*{-1em}
\end{equation}
where we used the vanishing traces of  $v \in \VV_1 $ at $\Gamma_D$, to write 
$\langle \grad v, \vec{\tilde{\theta}}\rangle_{L_2(\Omega)^n}$
as $\langle v,- \divv \vec{\tilde{\theta}}\rangle_{L_2(\Omega)}+\langle v, \vec{\tilde{\theta}}\cdot {\bf n}\rangle_{L_2(\Gamma_N)}$.

Each of the terms  $A \grad {\tilde{u}} -\vec{\tilde{\theta}}$, ${\tilde{u}}-g$, $N({\tilde{u}})-f-\divv \vec{\tilde{\theta}}$, and $\vec{\tilde{\theta}}\cdot{\bf n}-h$
correspond, in strong form, to a term of the least squares functional, and therefore their norms can be bounded by a multiple of the norm of the residual, which is the basis of our approximate residual evaluation.
In order to verify Condition~\ref{nonlineareval*}, we have to collect some assumptions on the wavelets, which will be done in the next subsection.
\begin{remark} \label{homDir} If $\Gamma_D=\emptyset$, then obviously \eqref{29} should be read without the second term involving $\Psi^{\VV_2}$.
If  $\Gamma_D\neq \emptyset$ and {\em homogeneous} Dirichlet boundary conditions are prescribed on $\Gamma_D$, i.e., $g=0$, it is simpler to select $\UU=\VV_1=\{u \in H^1(\Omega)\colon u|_{\Gamma_D}=0\}$, and to omit integral over $\Gamma_D$ in the definition of $G$, so that again \eqref{29} should be read without the second term involving $\Psi^{\VV_2}$.
\end{remark}


\subsection{Wavelet assumptions and definitions} \label{Swavelets}
We formulate conditions on $\Psi^{\VV_1}$, $\Psi^{\VV_2}$, $\Psi^{\UU}$, and $\Psi^{\PP}$, in addition to being Riesz bases for $\VV_1$, $\VV_2$, $\UU$, and $\PP$, respectively.

Recalling that $\PP=\PP_1\times\cdots\times\PP_n$, we select $\Psi^\PP$ of canonical form
$$
(\Psi^{\PP_1},0_{\PP_2},\ldots,0_{\PP_n}) \cup \cdots \cup (0_{\PP_1},\ldots,0_{\PP_{n-1}},\Psi^{\PP_n}),
$$
where $\Psi^{\PP_q}=\{\psi^{\PP_q}_\lambda\colon \lambda \in \vee_{\PP_q}\}$ is a Riesz basis for $\PP_q$ (with $\vee_{\PP_{q'}} \cap \vee_{\PP_{q"}} =\emptyset$ when $q'\neq q''$).

For $\ast \in \{\UU,\PP_1,\ldots,\PP_n,\VV_1\}$, we collect a number of (standard) assumptions, \eqref{w1}--\eqref{w8}, on the scalar-valued wavelet collections $\Psi^\ast=\{\psi_\lambda^\ast\colon \lambda \in \vee_\ast\}$ on $\Omega$.
Corresponding assumptions on the wavelets $\Psi^{\VV_2}$ on $\Gamma_D$ will be formulated at the end of this subsection.
To each $\lambda \in \vee_\ast$, we associate a value $|\lambda| \in \N_0$, which is called the {\em level} of $\lambda$.
We will assume that the elements of $\Psi^\ast$ have {\em one vanishing moment}, and are {\em locally supported, piecewise polynomial} of some degree $m$, w.r.t.\ {\em dyadically nested partitions} in the following sense:

\begin{enumerate} \renewcommand{\theenumi}{$w_{\arabic{enumi}}$}
\item \label{w1} There exists a collection $\cO_\Omega:=\{\omega\colon \omega \in \cO_\Omega\}$ of closed polytopes,  such that, with $|\omega|\in \N_0$ being the {\em level} of $\omega$, $\meas(\omega \cap \omega')=0$ when $|\omega|=|\omega'|$ and $\omega \neq \omega'$;
for any $\ell \in \N_0$, $\bar{\Omega}=\cup_{|\omega|=\ell} \omega$; ${\rm diam}\,\omega \eqsim 2^{-|\omega|}$; and $\omega$ is the union of  $\omega'$ for some $\omega'$ with $|\omega'|=|\omega|+1$. We call $\omega$ the {\em parent} of its {\em children} $\omega'$.
Moreover, we assume that the $\omega \in \cO_\Omega$ are uniformly {\em shape regular}, in the sense that they satisfy a uniform Lipschitz condition, and  $\meas(F_\omega)\eqsim \meas(\omega)^{\frac{n-1}{n}}$ for $F_\omega$ being any facet of $\omega$.
\item\label{w2} ${\rm supp}\,\psi^\ast_\lambda$ is contained in a connected union of a uniformly bounded number of $\omega$'s with $|\omega|=|\lambda|$, and restricted to each of these $\omega$'s is $\psi^\ast_\lambda$ a polynomial of degree $m$.
\item\label{w3} Each $\omega$ is intersected by the supports of a uniformly bounded number of $\psi^\ast_\lambda$'s with  $|\lambda|=|\omega|$.
\item\label{w8} $\int_\Omega \psi^\ast_\lambda \,dx =0$, possibly with the exception of those $\lambda$ with ${\rm dist}({\rm supp}\,\psi^\ast_\lambda,\Gamma_D) \lesssim 2^{-|\lambda|}$, or with $|\lambda|=0$.
\end{enumerate}
Generally, the polynomial degree $m$ will be different for the different bases, but otherwise fixed.
The collection $\cO_\Omega$ is shared among all bases.
Note that the conditions of $\Psi^\UU$ being a basis for $\UU$, and to consist of piecewise polynomials, implies that $\UU \subset C(\bar{\Omega})$.
Wavelets of in principle arbitrary order that satisfy these assumptions can be found in e.g. \cite{56,239.17}.

\begin{definition} \label{def_tiling} A collection ${\mathcal T} \subset \cO_\Omega$ such that $\overline{\Omega}=\cup_{\omega \in {\mathcal T}} \omega$, and for $\omega_1 \neq \omega_2 \in {\mathcal T}$, $\meas(\omega_1 \cap \omega_2)=0$ will be called a {\em tiling}.
With $\cP_m(\tria)$, we denote the space of piecewise polynomials of degree $m$ w.r.t.\ $\tria$.
The smallest common refinement of tilings ${\mathcal T}_1$ and ${\mathcal T}_2$ is denoted as ${\mathcal T}_1 \oplus {\mathcal T}_2$.
\end{definition}

To be able to find, in linear complexity, a representation of a function, given as linear combination of wavelets, as a piecewise polynomial w.r.t.\ a tiling --mandatory for an efficient evaluation of nonlinear terms--, 
we will impose a tree constraint on the underlying 
set of wavelet indices. A similar approach was followed earlier in \cite{56.2, 45.22, 316.5, 22.6, 310.5}.

\begin{definition} \label{def1} 
To each $\lambda \in \vee_*$ with $|\lambda|>0$, we associate one $\mu \in \vee_*$ with $|\mu|=|\lambda|-1$ and $\meas(\supp \psi^*_\lambda \cap \supp \psi^*_\mu)>0$.
We call $\mu$ the {\em parent} of $\lambda$, and so $\lambda$ a {\em child} of $\mu$.

To each $\lambda \in \vee_*$, we associate some neighbourhood  $\cS(\psi^*_\lambda)$ of $\supp \psi^*_\lambda$, with diameter $\lesssim2^{-|\lambda|}$, such that
$\cS(\psi^*_\lambda) \subset \cS(\psi^*_\mu)$ when $\lambda$ is a child of $\mu$.

We call a finite $\Lambda \subset \vee_\ast$ a {\em tree}, if it contains all $\lambda \in \vee_*$ with $|\lambda|=0$, as well as the parent of any $\lambda \in \Lambda$ with $|\lambda|>0$.
\end{definition}

Note that we now have tree structures on the set $\cO_\Omega$ of polytopes, and as well as on the wavelet index sets $\vee_*$. We trust that no confusion will arise when we speak about parents or children.

For some collections of wavelets, as the Haar or more generally, Alpert wavelets (\cite{10}), it suffices to take
$\cS(\psi^*_\lambda):=\supp \psi^*_\lambda$.
The next result shows that, thanks to \eqref{w1}-\eqref{w2}, a suitable neighbourhood $\cS(\psi^*_\lambda)$ as meant in Definition~\ref{def1} always exists.
\begin{lemma} With $C:=\sup_{\lambda \in \vee_*} 2^{|\lambda|} \diam \supp \psi^*_\lambda$, a valid choice of 
$\cS(\psi^*_\lambda)$ is given by 
$\{x \in \Omega\colon \dist(x,\supp \psi_\lambda^*) \leq C 2^{-|\lambda|}\}$.
\end{lemma}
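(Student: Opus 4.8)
The plan is to verify that the proposed set $\cS(\psi^*_\lambda)$ meets the three requirements imposed in Definition~\ref{def1}: it must be a neighbourhood of $\supp \psi^*_\lambda$, have diameter $\lesssim 2^{-|\lambda|}$, and respect the nesting $\cS(\psi^*_\lambda) \subset \cS(\psi^*_\mu)$ whenever $\lambda$ is a child of $\mu$. Before doing so I would first record that $C < \infty$: by \eqref{w2} the support of $\psi^*_\lambda$ is contained in a connected union of a uniformly bounded number of cells $\omega$ with $|\omega| = |\lambda|$, and by \eqref{w1} each such cell has $\diam \omega \eqsim 2^{-|\lambda|}$, so $\diam \supp \psi^*_\lambda \lesssim 2^{-|\lambda|}$ uniformly in $\lambda$, which is precisely the finiteness of $C$. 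In particular one has $\diam \supp \psi^*_\lambda \leq C 2^{-|\lambda|}$ for every $\lambda$, a bound I will use repeatedly.

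The first two requirements are immediate. Every point of $\supp \psi^*_\lambda$ lies at distance $0 \leq C 2^{-|\lambda|}$ from the support, so $\supp \psi^*_\lambda \subset \cS(\psi^*_\lambda)$, giving the neighbourhood property. For the diameter, if $x,y \in \cS(\psi^*_\lambda)$, I choose $x',y' \in \supp \psi^*_\lambda$ with $|x-x'|, |y-y'| \leq C 2^{-|\lambda|}$, and the triangle inequality yields $|x-y| \leq 2 C 2^{-|\lambda|} + \diam \supp \psi^*_\lambda \leq 3 C 2^{-|\lambda|} \lesssim 2^{-|\lambda|}$.

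The nesting is the only substantial point, and it is where the parent--child relation enters. Let $\lambda$ be a child of $\mu$, so that $|\lambda| = |\mu| + 1$ and, by Definition~\ref{def1}, $\meas(\supp \psi^*_\lambda \cap \supp \psi^*_\mu) > 0$; in particular I may fix a point $z \in \supp \psi^*_\lambda \cap \supp \psi^*_\mu$. Given $x \in \cS(\psi^*_\lambda)$, pick $x' \in \supp \psi^*_\lambda$ with $|x-x'| \leq C 2^{-|\lambda|}$. Then, using $z \in \supp \psi^*_\mu$,
$$
\dist(x,\supp \psi^*_\mu) \leq |x-z| \leq |x-x'| + |x'-z| \leq C 2^{-|\lambda|} + \diam \supp \psi^*_\lambda \leq 2 C 2^{-|\lambda|} = C 2^{-|\mu|},
$$
where the final equality uses $2^{-|\lambda|} = \tfrac12 2^{-|\mu|}$; hence $x \in \cS(\psi^*_\mu)$, which is the desired inclusion. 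The thing to watch here is that the factor of $2$ gained by descending from level $|\lambda|$ to level $|\mu| = |\lambda|-1$ exactly absorbs the two contributions bounded by $C 2^{-|\lambda|}$, namely the distance from $x$ to the child support and the diameter of that support. This is exactly why the single constant $C$, and no larger one, already suffices, and it is the only place where the geometry of the wavelet supports (their overlap and their scale-commensurate diameters from \eqref{w1}--\eqref{w2}) is genuinely needed.
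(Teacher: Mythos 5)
Your proof is correct and its core — bounding $\dist(x,\supp\psi^*_\mu)$ by $\dist(x,\supp\psi^*_\lambda)+\diam(\supp\psi^*_\lambda)\leq 2C2^{-|\lambda|}=C2^{-|\mu|}$ via a point in the overlap of the two supports — is exactly the paper's one-line argument. The additional checks you spell out (finiteness of $C$, the neighbourhood and diameter properties) are left implicit in the paper but are the same trivial verifications.
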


\begin{proof} For $\mu,\lambda \in \vee_*$ with $|\mu|=|\lambda|-1$ and $\meas(\supp \psi^*_\lambda \cap \supp \psi^*_\mu)>0$, and $x \in \Omega$ with $\dist(x,\supp \psi_\lambda^*) \leq C 2^{-|\lambda|}$, it holds that
$\dist(x,\supp \psi_\mu^*) \leq \dist(x,\supp \psi_\lambda^*)+\diam(\supp \psi^*_\lambda) \leq C2^{-|\mu|}$.
\end{proof}


A proof of the following proposition, as well as an algorithm to apply the multi-to-single-scale transformation that is mentioned, is given in \cite[\S4.3]{249.94}.

\begin{proposition} \label{prop4}
Given a tree $\Lambda \subset \vee_\ast$, there exists a tiling ${\mathcal T}({\Lambda}) \subset \cO_\Omega$ with $\# {\mathcal T}({\Lambda}) \lesssim \# \Lambda$ such that 
$\Span\{ \psi^\ast_\lambda \colon \lambda \in \Lambda\}\subset \cP_{m}({\mathcal T}({\Lambda}))$.
Moreover, equipping $\cP_{m}(\tria(\Lambda))$  with a basis of functions, each of which supported in $\omega$ for one  $\omega \in \tria(\Lambda)$, the  representation of this embedding, known as the multi- to single-scale transform, can be applied in ${\mathcal O}(\#\Lambda)$ operations.
\end{proposition}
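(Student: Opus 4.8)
The plan is to build ${\mathcal T}(\Lambda)$ as the set of leaves of a suitable subtree of the cell hierarchy $\cO_\Omega$, chosen just fine enough to resolve every wavelet indexed by $\Lambda$. First I would, for each $\lambda \in \Lambda$, let $W(\lambda) \subset \cO_\Omega$ denote the collection of level-$|\lambda|$ cells $\omega$ with $\meas(\omega \cap \supp\psi^\ast_\lambda)>0$; by \eqref{w2} one has $\# W(\lambda) \lesssim 1$. Let $\mathcal{G}$ be the smallest subcollection of $\cO_\Omega$ that contains all level-$0$ cells, contains $W(\lambda)$ for every $\lambda \in \Lambda$, and is closed under taking parents in $\cO_\Omega$. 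Then $\mathcal{G}$ is a cell tree, and I take ${\mathcal T}(\Lambda)$ to be its set of leaves; as the leaves of a cell tree they have pairwise disjoint interiors and union $\bar\Omega$, so ${\mathcal T}(\Lambda)$ is a tiling in the sense of Definition~\ref{def_tiling}.

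To prove the inclusion $\Span\{\psi^\ast_\lambda : \lambda \in \Lambda\} \subset \cP_m({\mathcal T}(\Lambda))$, it suffices to show that each $\psi^\ast_\lambda$ is a polynomial of degree $m$ on every $T \in {\mathcal T}(\Lambda)$. Fix $\lambda$ and a leaf $T$ with $\meas(T \cap \supp\psi^\ast_\lambda)>0$; I claim $|T| \geq |\lambda|$. Indeed, by \eqref{w2} such a $T$ meets some $\omega \in W(\lambda)$ of level $|\lambda|$, and were $|T|<|\lambda|$, then $\omega$, having overlapping interior with $T$ at a strictly finer level, would be a strict descendant of $T$; since $\omega \in \mathcal{G}$ and $\mathcal{G}$ is parent-closed, the child of $T$ on the path to $\omega$ would also lie in $\mathcal{G}$, contradicting that $T$ is a leaf. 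Hence $T$ is contained in a single level-$|\lambda|$ cell, on which $\psi^\ast_\lambda$ is a degree-$m$ polynomial by \eqref{w2}, so $\psi^\ast_\lambda|_T \in \cP_m$.

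The hard part will be the cardinality bound $\# {\mathcal T}(\Lambda) \lesssim \#\Lambda$, which I would establish by bounding $\#\mathcal{G}$ level by level. Consider $\omega \in \mathcal{G}$ of level $\ell\geq 1$. By construction $\omega$ is an ancestor of some $\omega^\ast \in W(\lambda^\ast)$ with $\lambda^\ast \in \Lambda$, $|\lambda^\ast| = \ell^\ast \geq \ell$. Since $\Lambda$ is a tree it contains the level-$\ell$ ancestor $\lambda$ of $\lambda^\ast$, and the nested neighbourhoods of Definition~\ref{def1} give $\supp\psi^\ast_{\lambda^\ast} \subset \cS(\psi^\ast_{\lambda^\ast}) \subset \cS(\psi^\ast_\lambda)$; as $\omega^\ast$ meets $\supp\psi^\ast_{\lambda^\ast}$ and $\omega \supset \omega^\ast$, the level-$\ell$ cell $\omega$ meets $\cS(\psi^\ast_\lambda)$. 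Because $\diam \cS(\psi^\ast_\lambda) \lesssim 2^{-\ell}$ while level-$\ell$ cells are shape regular with $\diam \eqsim 2^{-\ell}$ and disjoint interiors, a comparison of measures as in \eqref{w1} shows that only $\lesssim 1$ level-$\ell$ cells meet $\cS(\psi^\ast_\lambda)$. Thus every level-$\ell$ cell of $\mathcal{G}$ can be charged to a level-$\ell$ index of $\Lambda$ with uniformly bounded multiplicity, giving $\#\{\omega \in \mathcal{G}: |\omega|=\ell\} \lesssim \#\{\lambda \in \Lambda: |\lambda|=\ell\}$; summing over $\ell$ (the finitely many level-$0$ cells contributing a constant) yields $\#\mathcal{G}\lesssim\#\Lambda$, whence $\#{\mathcal T}(\Lambda) \leq \#\mathcal{G} \lesssim \#\Lambda$. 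This step is exactly where the tree constraint on $\Lambda$ and the nested supports $\cS(\psi^\ast_\lambda)$ are indispensable, since without them the ancestor closure could be far larger than $\#\Lambda$.

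Finally, for the linear-complexity claim I would realise the embedding by a single coarse-to-fine sweep over $\mathcal{G}$: writing each wavelet through its local two-scale (refinement) relation as a finite combination of the chosen single-scale functions one level finer --- a bounded number of terms by \eqref{w2}--\eqref{w3} --- one accumulates the single-scale coefficients on the cells of $\mathcal{G}$ from level $0$ downward, discarding data outside ${\mathcal T}(\Lambda)$. Each cell is touched $\mathcal{O}(1)$ times, so the total cost is $\mathcal{O}(\#\mathcal{G}) = \mathcal{O}(\#\Lambda)$; the detailed algorithm and its bookkeeping are given in \cite[\S4.3]{249.94}.
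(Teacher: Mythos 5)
Your overall strategy is the standard one (and the one behind the reference the paper itself cites for this result, \cite[\S4.3]{249.94}): generate a parent-closed family of cells from the wavelet supports, take its leaves as the tiling, and bound its cardinality by charging cells to wavelet indices via the tree property of $\Lambda$. However, there is a genuine flaw at the first step: \emph{the leaves of $\mathcal{G}$ need not form a tiling}. A collection that contains the level-$0$ cells and is closed under taking parents may contain \emph{some but not all} children of a given cell; that cell is then not a leaf (it has a child in $\mathcal{G}$), yet the missing children are covered by no leaf. Concretely, let $\bar{\Omega}=\omega_0$ be the single level-$0$ cell, with children $A,B,C,D$, and let $\Lambda$ consist of the level-$0$ indices together with one level-$1$ index $\lambda$ whose support is $A\cup B$ --- nothing in \eqref{w1}--\eqref{w8} excludes this, and it is in fact the generic situation (e.g.\ for three-point wavelets, supports are unions of a few cells around a vertex, not complete sibling sets). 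Then $\mathcal{G}=\{\omega_0,A,B\}$, whose leaves are $\{A,B\}$; their union misses $C\cup D$. So the assertion ``as the leaves of a cell tree they have pairwise disjoint interiors and union $\bar\Omega$'' is false as stated, and both the tiling property and your inclusion $\Span\{\psi^\ast_\lambda\colon\lambda\in\Lambda\}\subset\cP_m(\mathcal{T}(\Lambda))$ collapse with it (a piecewise polynomial space w.r.t.\ a non-covering family does not contain the wavelets on the uncovered region).

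The repair is small but necessary: replace $\mathcal{G}$ by its completion $\mathcal{G}'$, obtained by adding all siblings of every $\omega\in\mathcal{G}$ with $|\omega|\geq 1$ (equivalently, all children of any cell having at least one child in $\mathcal{G}$). By \eqref{w1} each cell has a uniformly bounded number of children, so $\#\mathcal{G}'\lesssim\#\mathcal{G}$, and $\mathcal{G}'$ is still parent-closed; moreover every non-leaf of $\mathcal{G}'$ now contains \emph{all} of its children, so descending from the level-$0$ cells one sees that the leaves of $\mathcal{G}'$ do cover $\bar{\Omega}$, and distinct leaves overlap in measure zero. With $\mathcal{T}(\Lambda):=\text{leaves}(\mathcal{G}')$ the rest of your argument goes through verbatim: the inclusion proof uses only parent-closedness and the leaf property; and the cardinality bound --- which is the essential point, and which you argue correctly by combining the tree property of $\Lambda$ with the nested neighbourhoods $\cS(\psi^\ast_\lambda)$ of Definition~\ref{def1} and a volume comparison --- is affected only by a constant factor. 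The concluding complexity claim, deferring the bookkeeping of the coarse-to-fine sweep to \cite[\S4.3]{249.94}, is acceptable at the level of detail given, since the paper does exactly the same.
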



The benefit of the definition of $\cS(\psi_\lambda^*)$ appears from the following lemma.

\begin{lemma} \label{lem-is-tree} Let $\overline{\Omega}=\Sigma_0 \supseteq \Sigma_1 \supseteq \cdots$.
Then
$$
\Lambda^*:=\big\{\lambda \in \vee_* \colon \meas(\cS(\psi_\lambda^*) \cap \Sigma_{|\lambda|})>0\big\}
$$
is a tree.
\end{lemma}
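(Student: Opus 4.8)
The plan is to verify directly the two defining properties of a tree from Definition~\ref{def1}, namely that $\Lambda^*$ contains every $\lambda \in \vee_*$ with $|\lambda|=0$, and that it is closed under taking parents. These two structural conditions are the genuine content of the assertion. Finiteness, the remaining requirement in Definition~\ref{def1}, is not an intrinsic consequence of the hypotheses as they stand and must be supplied by the concrete choice of the nested sequence $(\Sigma_\ell)_\ell$ in the applications (where $\Sigma_\ell$ eventually meets none of the neighbourhoods $\cS(\psi_\lambda^*)$); I would flag this but concentrate on the tree structure.

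For the root condition, I would use that by \eqref{w1} we have $\Sigma_0=\overline{\Omega}=\bigcup_{|\omega|=0}\omega$, while each $\cS(\psi^*_\lambda)$ is, by its construction in Definition~\ref{def1}, a neighbourhood of $\supp\psi^*_\lambda$ contained in $\Omega\subseteq\overline{\Omega}$. Since $\psi^*_\lambda$ is a nonzero piecewise polynomial, $\supp\psi^*_\lambda$, and hence the larger set $\cS(\psi^*_\lambda)$, has positive measure, so that $\meas(\cS(\psi^*_\lambda)\cap\Sigma_0)=\meas(\cS(\psi^*_\lambda))>0$ for every $\lambda$ with $|\lambda|=0$. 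Thus all roots lie in $\Lambda^*$.

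The heart of the argument is parent-closure, and it rests on combining two monotonicities. I would fix $\lambda\in\Lambda^*$ with $|\lambda|>0$ and let $\mu$ be its parent, so $|\mu|=|\lambda|-1$. First, Definition~\ref{def1} gives $\cS(\psi^*_\lambda)\subseteq\cS(\psi^*_\mu)$ precisely because $\lambda$ is a child of $\mu$. Second, the hypothesis $\Sigma_0\supseteq\Sigma_1\supseteq\cdots$ yields $\Sigma_{|\lambda|}=\Sigma_{|\mu|+1}\subseteq\Sigma_{|\mu|}$. Intersecting the two inclusions gives $\cS(\psi^*_\lambda)\cap\Sigma_{|\lambda|}\subseteq\cS(\psi^*_\mu)\cap\Sigma_{|\mu|}$, whence
\[
\meas\big(\cS(\psi^*_\mu)\cap\Sigma_{|\mu|}\big)\geq\meas\big(\cS(\psi^*_\lambda)\cap\Sigma_{|\lambda|}\big)>0,
\]
so $\mu\in\Lambda^*$. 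This completes the verification. I do not anticipate any serious obstacle here: the neighbourhoods $\cS(\psi^*_\lambda)$ were introduced in Definition~\ref{def1} exactly so that they nest along the parent relation, which is what makes them align with the opposite nesting of the $\Sigma_\ell$. The only points I would be careful about are the finiteness caveat noted above and, in the root step, ensuring that $\cS(\psi^*_\lambda)\subseteq\overline{\Omega}$ so that intersecting with $\Sigma_0$ loses no measure.
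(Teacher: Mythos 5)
Your proof is correct and follows essentially the same route as the paper's: the roots lie in $\Lambda^*$ because $\Sigma_0=\overline{\Omega}$, and parent-closure follows by combining the nesting $\cS(\psi^*_\lambda)\subseteq\cS(\psi^*_\mu)$ from Definition~\ref{def1} with the hypothesis $\Sigma_{|\lambda|}\subseteq\Sigma_{|\mu|}$, which is exactly the monotonicity argument the paper compresses into one line. Your caveat about finiteness is also consistent with the paper, which likewise leaves that requirement to be met by the concrete $\Sigma_\ell$ arising in the applications.
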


\begin{proof}
The set $\Lambda^*$ contains all $\lambda \in \vee_*$ with $|\lambda|=0$, as well as, by definition of $\cS(\cdot)$, the parent of any $\lambda \in \Lambda^*$ with $|\lambda|>0$.
\end{proof}

In Proposition~\ref{prop4} we saw that for each tree $\Lambda$ there exists a tiling ${\mathcal T}(\Lambda)$, with $\# {\mathcal T}(\Lambda) \lesssim \#\Lambda$, such that $\Span\{ \psi^\ast_\lambda \colon \lambda \in \Lambda\}\subset \cP_{m}({\mathcal T}({\Lambda}))$.
Conversely, in the following, given a tiling ${\mathcal T}$, and a constant $k \in \N_0$, we construct a tree $\Lambda^\ast({{\mathcal T},k})$ with $\# \Lambda^\ast({{\mathcal T},k}) \lesssim \# {\mathcal T}$ (dependent on $k$) such that a kind of reversed statements hold:
In Appendix~\ref{Sdecay}, statements of type $\lim_{k \rightarrow \infty} \sup_{0  \neq g \in \cP_{m}({\mathcal T})} \frac{\|\langle \Psi^*,g\rangle_{L_2(\Omega)}|_{\vee_*\setminus \Lambda^\ast({{\mathcal T},k})}\|}{\|g\|_{*'}} = 0$ will be shown, meaning that 
for any tolerance there exist a $k$ such that for any $g  \in \cP_{m}({\mathcal T})$ the relative error in dual norm in the best approximation from the span of the corresponding dual wavelets with indices in $\Lambda^\ast({{\mathcal T},k})$ is less than this tolerance.

\begin{definition} \label{partition-to-lambda}
Given a tiling ${\mathcal T} \subset \cO_\Omega$, let $t({\mathcal T}) \subset \cO_\Omega$ be its enlargement by adding all ancestors of all $\omega \in {\mathcal T}$.
Given a $k  \in \N_0$, we set 
$$
\Lambda^\ast({{\mathcal T},k}):=\big\{\lambda \in \vee_* \colon \meas\big(\cS(\psi_\lambda^*) \cap  
\bigcup_{\{\omega \in t({\mathcal T})\colon |\omega|=\max(|\lambda|-k,0)\}} \omega
\big)>0\big\}.
$$
\end{definition}

\begin{proposition} 
The set $\Lambda^\ast({{\mathcal T},k})$ is a tree, and 
 $\# \Lambda^\ast({{\mathcal T},k}) \lesssim \# {\mathcal T}$ {\rm(}dependent on $k \in \N_0${\rm)}.
\end{proposition}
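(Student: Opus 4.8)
\emph{The tree property.} Throughout, for $\ell\in\N_0$ I write $\Sigma_\ell:=\bigcup_{\{\omega\in t(\tria)\colon|\omega|=\ell\}}\omega$, so that $\Lambda^*(\tria,k)=\{\lambda\in\vee_*\colon\meas(\cS(\psi^*_\lambda)\cap\Sigma_{\max(|\lambda|-k,0)})>0\}$. First I would check that $\bar\Omega=\Sigma_0\supseteq\Sigma_1\supseteq\cdots$: one has $\Sigma_0=\bar\Omega$ since $\tria$ tiles $\bar\Omega$, so every level-$0$ cell is an ancestor of some tile and thus lies in $t(\tria)$; and $\Sigma_{\ell+1}\subseteq\Sigma_\ell$ since $t(\tria)$ is closed under taking ancestors, so every $\omega\in t(\tria)$ with $|\omega|=\ell+1$ has its parent in $t(\tria)$ and is contained in it by \eqref{w1}. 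As $\ell\mapsto\max(\ell-k,0)$ is nondecreasing, the shifted sequence $\tilde\Sigma_\ell:=\Sigma_{\max(\ell-k,0)}$ is again nested with $\tilde\Sigma_0=\bar\Omega$, and $\Lambda^*(\tria,k)=\{\lambda\colon\meas(\cS(\psi^*_\lambda)\cap\tilde\Sigma_{|\lambda|})>0\}$; hence Lemma~\ref{lem-is-tree} applies directly and yields that $\Lambda^*(\tria,k)$ is a tree, with no further work.

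\emph{The cardinality bound.} The crux is the purely combinatorial estimate $\# t(\tria)\lesssim\#\tria$. I would prove it by regarding $t(\tria)$ as a finite forest under the parent--child relation of $\cO_\Omega$ (its roots being the level-$0$ cells) and showing that its leaves are exactly the elements of $\tria$, while every internal node has at least two children in $t(\tria)$. That the leaves are the tiles uses that two cells of $\cO_\Omega$ are always nested or essentially disjoint (a consequence of \eqref{w1}) together with the disjointness of the tiles: a tile can have no descendant in $t(\tria)$, and a proper ancestor of a tile always has an in-forest child. For the branching, if an internal $\omega$ had an in-forest child $\omega'$, then any sibling $\omega''$ of $\omega'$ is covered by tiles; by the nested-or-disjoint dichotomy those tiles cannot be ancestors of $\omega''$ (that would force two nested tiles), so they are proper descendants of $\omega''$, whence $\omega''\in t(\tria)$. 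As $\omega$ has at least two children in $\cO_\Omega$ by \eqref{w1}, it has at least two in $t(\tria)$. A forest in which every internal node has at least two children has at most twice as many nodes as leaves, giving $\#t(\tria)\le2\,\#\tria$.

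\emph{Assembling.} It remains to count $\Lambda^*(\tria,k)$ level by level. Fix $\ell$ and put $j:=\max(\ell-k,0)$. For a single $\omega\in t(\tria)$ with $|\omega|=j$, I would bound the number of $\lambda$ with $|\lambda|=\ell$ and $\meas(\cS(\psi^*_\lambda)\cap\omega)>0$ as follows: such a $\cS(\psi^*_\lambda)$ has diameter $\lesssim2^{-\ell}$, so $\supp\psi^*_\lambda$ meets a fixed $\mathcal{O}(2^{-\ell})$-neighbourhood of $\omega$; by \eqref{w2} this support is a union of $\lesssim1$ level-$\ell$ cells, by shape regularity and $\meas(\omega)\eqsim2^{-jn}$ the number of level-$\ell$ cells meeting that neighbourhood is $\lesssim2^{(\ell-j)n}\le2^{kn}$, and by \eqref{w3} each such cell is met by the supports of $\lesssim1$ level-$\ell$ wavelets. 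Hence the count per $\omega$ is $\lesssim2^{kn}$, and summing over the $\omega\in t(\tria)$ at each level and then over $\ell$ yields
$$
\#\Lambda^*(\tria,k)\lesssim 2^{kn}\sum_{\ell\ge0}\#\{\omega\in t(\tria)\colon|\omega|=\max(\ell-k,0)\}\lesssim 2^{kn}(k+1)\,\#t(\tria)\lesssim\#\tria,
$$
the $k$-dependent constant being absorbed as permitted. I expect the combinatorial bound $\#t(\tria)\lesssim\#\tria$ to be the main obstacle, since it is the step that genuinely uses the tiling (partition) hypothesis on $\tria$; the level-by-level geometric count is a routine finite-overlap argument once the shape-regularity bounds of \eqref{w1} are in hand.
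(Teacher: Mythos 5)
Your proof is correct, and its skeleton coincides with the paper's: the tree property is reduced to Lemma~\ref{lem-is-tree} (you additionally verify its hypothesis, namely that the sets $\Sigma_{\max(\ell-k,0)}$ are nested and start at $\bar\Omega$, which the paper leaves implicit), and the cardinality bound is factored through the combinatorial estimate $\# t(\tria)\lesssim\#\tria$ followed by a finite-overlap count based on \eqref{w1}--\eqref{w3}. Where you genuinely differ is in the justification of that combinatorial estimate: the paper dispatches it in one line, ``since the number of children of any $\omega\in\cO_\Omega$ is uniformly bounded'', whereas you prove it by showing that in the forest $t(\tria)$ the leaves are exactly the tiles and every internal node has at least two children inside $t(\tria)$, whence $\#t(\tria)\le 2\,\#\tria$. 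Your mechanism is the more solid one: an upper bound on the number of children is not by itself what drives the estimate --- for a non-covering family, e.g.\ a single cell of level $L$, the set of ancestors has cardinality $L+1$, arbitrarily larger than the family --- and what is really needed is exactly your two ingredients, that tiles have no in-forest descendants (nested-or-disjoint dichotomy plus essential disjointness of tiles) and that internal nodes branch, both of which exploit the covering property of the tiling, as you correctly point out. Two cosmetic remarks: in your branching step a sibling $\omega''$ may coincide with a tile rather than properly contain tiles, but then $\omega''\in\tria\subset t(\tria)$ anyway, so the conclusion stands; and ``each cell has at least two children'' is a reading of \eqref{w1} (diameters roughly halve from one level to the next) that the paper also takes for granted.
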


\begin{proof}
The first statement follows from Lemma~\ref{lem-is-tree}.
Since the number of children of any  $\omega \in \cO_\Omega$ is uniformly bounded, it holds that $\# t({\mathcal T})\lesssim \# {\mathcal T}$,
and so $\# \Lambda^\ast({{\mathcal T},k}) \lesssim \# {\mathcal T}$ as a consequence of the wavelets being locally supported.
\end{proof}

\begin{example} Let $\Psi=\{\psi_\lambda\colon \lambda \in \vee\}$ be the collection of Haar wavelets on $\Omega=(0,1)$, i.e., the union of the function $\psi_{0,0} \equiv 1$, and, for $\ell \in \N$ and $k=0,\ldots,2^{\ell-1}-1$, the functions $\psi_{\ell,k}:=2^{\frac{\ell-1}{2}}\psi(2^{\ell-1}(\cdot-k))$, where $\psi\equiv 1$ on $[0,\frac{1}{2}]$ and $\psi\equiv -1$ on $(\frac{1}{2},1]$.
Writing $\lambda=(\ell,k)$, we set $|\lambda|=\ell$.
The parent of $\lambda$ with $|\lambda|>0$ is $\mu$ with $|\mu|=|\lambda|-1$ and  $\supp \psi_\lambda \subset \supp \psi_\mu$, and $\cS(\psi_\lambda):=\supp \psi_\lambda$.

Let $\cO_{\Omega}$ be the union, for $\ell \in \N_0$ and $k=0,\ldots,2^\ell-1$, of the intervals $2^{\ell}[k,k+1]$ to which we assign the level $\ell$.

Now as an example let $\Lambda \subset \vee$ be the set $\{(0,0),(1,0),(2,0),(3,0)\}$, which is a tree in the sense of Definition~\ref{def1}.
It corresponds to the solid parts in the left picture in Figure~\ref{fig1}.
\begin{figure}[h]
\begin{center}
\input{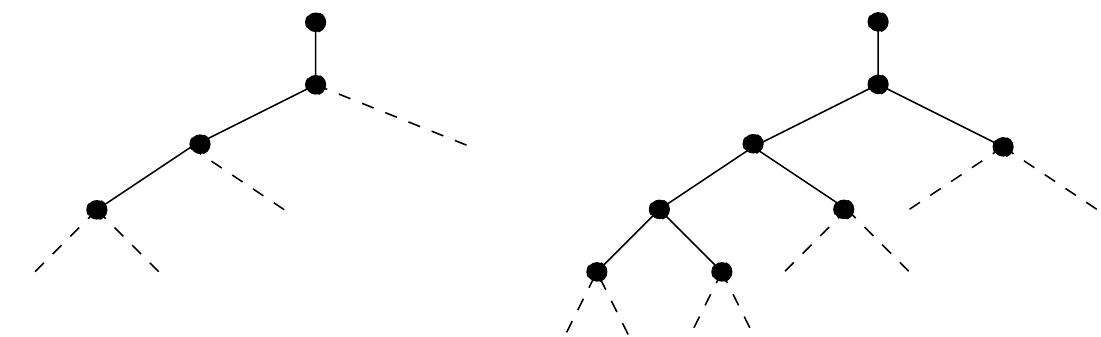_t}
\end{center}
\caption{The index set of the Haar basis given as an infinite binary tree, and its subsets $\Lambda=\{(0,0),(1,0),(2,0),(3,0)\}$ (left) and 
$\Lambda(\tria(\Lambda),1)$ (right).}
\label{fig1}
\end{figure}
The (minimal) tiling ${\mathcal T}(\Lambda)$ as defined in Proposition~\ref{prop4} is given by $\{[0,\frac{1}{8}],[\frac{1}{8},\frac{1}{4}], [\frac{1}{4},\frac{1}{2}], [\frac{1}{2},1]\}$.

Conversely, taking ${\mathcal T}:={\mathcal T}(\Lambda)$, the set $\Lambda(\tria,1) \subset \vee$ as defined in Definition~\ref{partition-to-lambda} is given by $\{(0,0),(1,0),(2,0),(2,1),(3,0),(3,1), (4,0), (4,1)\}$ and is illustrated in the right picture in Figure~\ref{fig1}.
\end{example}

\begin{definition} \label{def_admissible}
Recalling from \eqref{defpsi} and the first lines of this subsection
that the Riesz basis $\Psi$ for $\UU \times \PP$
is of canonical form
$$
\Psi=(\Psi^{\UU},0_{\PP_1},\ldots,0_{\PP_n}) \cup (0_\UU,\Psi^{\PP_1},0_{\PP_2},\ldots,0_{\PP_n}) \cup \cdots\cup
(0_\UU,0_{\PP_1},\ldots,0_{\PP_{n-1}},\Psi^{\PP_n}),
$$
with index set $\vee:=\vee_{\UU} \cup \vee_{\PP_1} \cup \cdots \cup \vee_{\PP_n}$,
we call $\Lambda \subset \vee$ {\em admissible} when
each of  $\Lambda \cap \vee_{\UU},\Lambda \cap \vee_{\PP_1},\,\ldots,\Lambda \cap \vee_{\PP_n}$ are trees. 
The tiling ${\mathcal T}(\Lambda)$ is defined as the smallest common refinement ${\mathcal T}(\Lambda \cap \vee_{\UU}) \oplus {\mathcal T}(\Lambda \cap \vee_{\PP_1})\oplus \cdots \oplus  {\mathcal T}(\Lambda \cap \vee_{\PP_n})$.
Conversely, given a tiling ${\mathcal T} \subset \cO_\Omega$ and a $k \in \N_0$, we define the admissible set $\Lambda({\mathcal T},k) \subset \vee$ by
$\Lambda^\UU({\mathcal T},k)\cup \Lambda^{\PP_1}({\mathcal T},k) \cup \cdots \cup \Lambda^{\PP_n}({\mathcal T},k)$.
\end{definition}

\begin{remark} \label{rem_order} Let $\Psi^{\UU}$ be a wavelet basis for $\UU$ of order $d_\UU>1$ (i.e., all wavelets $\psi_\lambda^{\UU}$ up to level $\ell$ span all piecewise polynomials in $\UU$ of degree $d_\UU-1$ w.r.t.\ $\{\omega\colon \omega \in \cO_\Omega,\,|\omega|=\ell\}$), and similarly, for $1 \leq q \leq n$, let $\Psi^{\PP_q}$ be a wavelet basis for $\PP_q$ of order $d_\PP>0$. 
Recalling the definition of an approximation class given in \eqref{class}, 
 a sufficiently smooth solution $(u,\vec{\theta})$ is in $\cA^s$ for $s=s_{\max}:=\min(\frac{d_\UU-1}{n},\frac{d_\PP}{n})$, whereas on the other hand membership of $\cA^s$ for $s>s_{\max}$ cannot be expected under whatever smoothness condition.
 
 For $s \leq s_{\max}$, a sufficient and `nearly' necessary condition for $(u,\vec{\theta})\in \cA^s$ is that $(u,\vec{\theta}) \in B^{s n+1}_{p,\tau}(\Omega) \times B^{s n}_{p,\tau}(\Omega)^n$ for $\frac{1}{p}<s+\frac{1}{2}$ and arbitrary $\tau>0$, see \cite{45.21}. This mild smoothness condition in the `Besov scale' has to be compared to the condition  $(u,\vec{\theta}) \in H^{s n+1}(\Omega) \times H^{s n}(\Omega)^n$ that is necessary to obtain a rate $s$ with approximation from the spaces of type $\Span \{\psi^\UU_\lambda\colon|\lambda|\leq L\} \times \prod_{q=1}^n \Span \{\psi^{\PP_q}_\lambda\colon|\lambda|\leq L\}$.
\end{remark}

\medskip

We pause to add {\em three more assumptions} on our PDE:
We assume that w.r.t.\ the coarsest possible tiling $\{\omega \colon \omega \in \cO_\Omega,\,|\omega|=0\}$ of $\bar{\Omega}$,
\begin{align}\label{16}
&A \text{ is piecewise polynomial,}\\ \label{17}
& \parbox{11 cm}{$N(u)$  is a partial differential operator of at most first order, with coefficients that are piecewise polynomials in $u$  and $x$, and}\\
& \text{$\overline{\Gamma}_N$, and so $\overline{\Gamma}_D$, is the union of facets of $\omega \in \cO_\Omega$ with $|\omega|=0$}.
\end{align}

\begin{remark}
The subsequent analysis can easily be generalized to $A$ being piecewise smooth.
With some more efforts other nonlinear terms $N$ can be handled as well.
For example, for $N(u)$ of the form $n_1(u) u$, it will be needed that for some 
$m \in \N$, for each $\omega \in \cO_\Omega$, there exists a
subspace $V_\omega \subset H^1_0(\omega)$ with $\|\cdot\|_{H^1(\omega)} \lesssim 2^{|\omega|} \|\cdot\|_{L_2(\omega)}$ on $V_\omega$, and 
$$
\|N(p_1) +p_2\|_{L_2(\omega)}  \lesssim \sup_{0 \neq v \in V_\omega} \frac{\langle N(p_1) +p_2,v\rangle_{L_2(\omega)}}{\|v\|_{L_2(\omega)}} \quad (p_1, p_2 \in \cP_m(\omega))
$$
(cf. proof of Lemma~\ref{lem3}).
\end{remark}

Finally in this subsection we formulate our assumptions on the wavelet Riesz basis $\Psi^{\VV_2}=\{\psi^{\VV_2}_\lambda \colon \lambda \in \vee_{\VV_2}\}$ for $H^{-\frac{1}{2}}(\Gamma_D)$.
We assume that it satisfies the assumptions \eqref{w2} and \eqref{w3} with $\cO_\Omega$ reading as
$$
\cO_{\Gamma_D}=\{\omega \cap \Gamma_D\colon \omega \in \cO_\Omega,\,\meas_{n-1}(\omega \cap \Gamma_D)>0\}.
$$
Furthermore, we impose that
\begin{equation} \label{substitute}
\|\psi^{\VV_2}_\lambda\|_{H^{-1}(\Gamma_D)} \lesssim 2^{-|\lambda|/2}.
\end{equation}
which, for biorthogonal wavelets, is essentially is \eqref{w8} (cf. \cite[lines following (A.2)]{56}).
(To relax the smoothness conditions on $\Gamma_D$ needed for the definition of $H^{-1}(\Gamma_D)$, one can replace \eqref{substitute} by
$\|\psi^{\VV_2}_\lambda\|_{H^s(\Gamma_D)} \lesssim 2^{|\lambda|(s+\frac{1}{2})}$ for some $s \in [-1,-\frac{1}{2})$).

The definition of a boundary tiling ${\mathcal T}_{\Gamma_D} \subset \cO_{\Gamma_D}$ is similar to Definition~\ref{def_tiling}.
Also similar to the corresponding preceding definitions are that of a tree $\Lambda \subset \vee_{\VV_2}$, and of the boundary tiling ${\mathcal T}_{\Gamma_D}(\Lambda) \subset \cO_{\Gamma_D}$ for 
$\Lambda \subset \vee_{\VV_2}$ being a tree.
Conversely, for a boundary tiling ${\mathcal T}_{\Gamma_D}\subset \cO_{\Gamma_D}$ and $k \in \N_0$, for $\ast \in \{\UU,\VV_2\}$ we define the tree
$$
\Lambda^\ast({{\mathcal T}_{\Gamma_D},k}):=
\big\{\lambda \in \vee_\ast \colon \meas_{n-1}(\cS(\psi_\lambda^\ast) \cap
\bigcup_{\{\omega \in \bar{\tria}_{\Gamma_D}\colon |\omega|= \max(|\lambda|-k,0)\}} \omega)>0\big\}.
$$

\subsection{An appropriate approximate residual evaluation} \label{Sapproxeval}
Given an admissible $\Lambda \subset \vee$, $[{\bf \tilde{u}}^\top\!,\bm{\tilde{\theta}}^\top]^\top \in \ell_2(\Lambda)$ with $(\tilde{u},\vec{\tilde{\theta}}):=[{\bf \tilde{u}}^\top\!,\bm{\tilde{\theta}}^\top] \Psi$ sufficiently close to $(u,\vec{\theta})$, and an $\eps>0$, our approximate evaluation of $D{\bf Q}([{\bf \tilde{u}}^\top\!,\bm{\tilde{\theta}}^\top]^\top)$, given in \eqref{29}, is built in the following steps, where $k \in \N_0$ is a sufficiently large constant:
 \renewcommand{\theenumi}{s\arabic{enumi}}
\begin{enumerate}
\item \label{step1}  Find a tiling ${\mathcal T}(\eps) \subset \cO_\Omega$, such that 
\begin{align*}
&\inf_{(g_\eps,f_\eps,\vec{h}_\eps) \in \cP_m({\mathcal T}(\eps)\cap \Gamma_D) \cap C(\Gamma_D) \times  \cP_m({\mathcal T}(\eps)) \times \cP_m({\mathcal T}(\eps))^n}
\Big(\|g-g_\eps\|_{H^{\frac{1}{2}}(\Gamma_D)} +\\
& \hspace*{8em}\|v_1 \mapsto \int_\Omega (f-f_\eps) v_1\,dx+\int_{\Gamma_N}(h-\vec{h}_\eps\cdot {\bf n}) v_1 \,ds\|_{\VV_1'}\Big)\leq \eps.
\end{align*}
Set ${\mathcal T}(\Lambda,\eps):={\mathcal T}(\Lambda) \oplus {\mathcal T}(\eps)$.
\item \label{step3} 
\begin{enumerate}
\item Approximate
$$
{\bf r}^{(\frac{1}{2})}_1:=\langle \Psi^{\VV_1}, N(\tilde{u})-f- \divv \vec{\tilde{\theta}}\rangle_{L_2(\Omega)}+\langle \Psi^{\VV_1}, \vec{\tilde{\theta}}\cdot {\bf n}-h\rangle_{L_2(\Gamma_N)}
$$
by
$$
{\bf \tilde{r}}^{(\frac{1}{2})}_1:={\bf r}^{(\frac{1}{2})}_1|_{\Lambda^{\VV_1}({\mathcal T}(\Lambda,\eps),k)}.
$$
\item With $\tilde{r}_1^{(\frac{1}{2})}:=({\bf \tilde{r}}^{(\frac{1}{2})}_1)^\top \Psi^{\VV_1}$, approximate
$$
{\bf r}_1=\left[\begin{array}{@{}c@{}} {\bf r}_{11} \\ {\bf r}_{12}\end{array} \right]:=\left[\begin{array}{@{}c@{}}  
\langle DN(\tilde{u}) \Psi^{\UU},  \tilde{r}_1^{(\frac{1}{2})} \rangle_{L_2(\Omega)}
 \\
\langle \Psi^{\PP},\grad \tilde{r}_1^{(\frac{1}{2})} \rangle_{L_2(\Omega)^n}
\end{array} \right] 
$$
by 
${\bf \tilde{r}}_1=
\left[\begin{array}{@{}c@{}} {\bf \tilde{r}}_{11} \\ {\bf \tilde{r}}_{12}\end{array} \right]:=
{\bf r}_1|_{\Lambda({\mathcal T}(\Lambda^{\VV_1}({\mathcal T}(\Lambda,\eps),k)),k)}$.
\end{enumerate}
\item \label{step4}
Approximate
$$
{\bf r}_2:=\left\langle
\left[\begin{array}{@{}c@{}}  
-A \grad \Psi^{\UU}
 \\
 \Psi^{\PP}
\end{array} \right] ,\vec{\tilde{\theta}}-A \grad \tilde{u}
\right\rangle_{L_2(\Omega)^n}
$$
by 
${\bf \tilde{r}}_2:={\bf r}_2|_{\Lambda({\mathcal T}(\Lambda),k)}$.
\item \label{step2} 
\begin{enumerate}
\item
Approximate ${\bf r}^{(\frac{1}{2})}_3:=\langle \Psi^{\VV_2}, \tilde{u}-g\rangle_{L_2(\Gamma_D)}$ by 
$$
{\bf \tilde{r}}^{(\frac{1}{2})}_3:={\bf r}^{(\frac{1}{2})}_3|_{\Lambda^{\VV_2}({\mathcal T}(\Lambda,\eps)\cap \Gamma_D,k)}.
$$
\item With $\tilde{r}_3^{(\frac{1}{2})}:=({\bf \tilde{r}}^{(\frac{1}{2})}_3)^\top \Psi^{\VV_2}$, 
approximate
${\bf r}_3:=\left[\begin{array}{@{}c@{}} \langle \Psi^\UU,\tilde{r}_3^{(\frac{1}{2})}\rangle_{L_2(\Gamma_D)}\\ 0_{\vee_\PP} \end{array}\right]$ by
$$
{\bf \tilde{r}}_3:={\bf r}_3|_{\Lambda({\mathcal T}_{\Gamma_D}(\Lambda^{\VV_2}({\mathcal T}(\Lambda,\eps)\cap \Gamma_D,k)),k)}.
$$
\end{enumerate}
\end{enumerate}

Although \eqref{step3}--\eqref{step2} may look involved at first glance, the same kind of approximation is used at all instances. Each term in \eqref{29} consists essentially of a wavelet basis that is integrated against a piecewise polynomial, or more precisely, a function that can be sufficiently accurately approximated by a piecewise polynomial thanks to the control of the data oscillation by the refinement of the partition performed in \eqref{step1}.
In each of these terms, all wavelets are neglected whose levels exceed locally the level of the partition plus a constant $k$.

In the next theorem it is shown that this approximate residual evaluation satisfies the condition for optimality of the adaptive wavelet Galerkin method.

\begin{theorem} \label{thm2}
For an admissible  $\Lambda \subset \vee$, $[{\bf \tilde{u}}^\top\!,\bm{\tilde{\theta}}^\top]^\top \in \ell_2(\Lambda)$ with $(\tilde{u},\vec{\tilde{\theta}}):=[{\bf \tilde{u}}^\top\!,\bm{\tilde{\theta}}^\top] \Psi$ sufficiently close to $(u,\vec{\theta})$, and an $\eps>0$, consider the steps \eqref{step1}-\eqref{step2}.
With $s>0$ such that $[{\bf u}^\top\!,\bm{\theta}^\top]^\top  \in \cA^s$, let ${\mathcal T}(\eps)$ from \eqref{step1} satisfy $\# {\mathcal T}(\eps) \lesssim \eps^{-1/s}$.
Then
$$
\|D{\bf Q}([{\bf \tilde{u}}^\top\!,\bm{\tilde{\theta}}^\top]^\top)-({\bf \tilde{r}}_1+{\bf \tilde{r}}_2+{\bf \tilde{r}}_3)\| \lesssim 
2^{-k/2} (\|u-\tilde{u}\|_\UU+\|\vec{\theta}-\vec{\tilde{\theta}}\|_\PP)+\eps,
$$
where the computation of ${\bf \tilde{r}}_1+{\bf \tilde{r}}_2+{\bf \tilde{r}}_3$ requires ${\mathcal O}(\# \Lambda +\eps^{-1/s})$ operations.
So by taking $k$ sufficiently large, Condition~\ref{nonlineareval*} is satisfied.
\end{theorem}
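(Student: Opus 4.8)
\emph{Proof proposal.} The plan is to use that the exact residual \eqref{29} splits \emph{exactly} as $D{\bf Q}([{\bf \tilde{u}}^\top\!,\bm{\tilde{\theta}}^\top]^\top)={\bf r}_1+{\bf r}_2+{\bf r}_3$, where ${\bf r}_1,{\bf r}_3$ are built from the \emph{full} inner functionals ${\bf r}_1^{(\frac12)},{\bf r}_3^{(\frac12)}$ (not their truncations $ {\bf \tilde r}_1^{(\frac12)}, {\bf \tilde r}_3^{(\frac12)}$ from \eqref{step3} and \eqref{step2}), so that by the triangle inequality it suffices to bound $\|{\bf r}_1-{\bf \tilde r}_1\|$, $\|{\bf r}_2-{\bf \tilde r}_2\|$ and $\|{\bf r}_3-{\bf \tilde r}_3\|$ one at a time. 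Two ingredients drive every bound. First, each vector to be truncated has the form $\langle \Psi^\ast,w\rangle$, or a first-order transform such as $\langle A\grad \Psi^\UU,\,\cdot\,\rangle$ or $\langle \Psi^\PP,\grad\,\cdot\,\rangle$, with $w$ \emph{piecewise polynomial} with respect to an explicit tiling; the decay estimates of Appendix~\ref{Sdecay} then bound the tail left after restriction to $\Lambda^\ast({\mathcal T},k)$ by $2^{-k/2}$ times the relevant dual norm of $w$. Second, the least squares norm equivalence of Lemma~\ref{lem2} (in the form recorded at the end of Sect.~\ref{Snormal}) bounds each of $\|\vec{\tilde\theta}-A\grad\tilde u\|_{L_2(\Omega)^n}$, the $\VV_1'$-norm of the data functional, and $\|\tilde u-g\|_{\VV_2'}$ by a constant multiple of $\|u-\tilde u\|_\UU+\|\vec\theta-\vec{\tilde\theta}\|_\PP$.

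I would treat ${\bf r}_2$ first, since its integrand carries no data and it requires only a single truncation: by \eqref{16} and Proposition~\ref{prop4} the integrand $\vec{\tilde\theta}-A\grad\tilde u$ lies in $\cP_m({\mathcal T}(\Lambda))$, so Appendix~\ref{Sdecay}, applied to $A\grad \Psi^\UU$ and $\Psi^\PP$, yields for the truncation in \eqref{step4} that $\|{\bf r}_2-{\bf \tilde r}_2\|\lesssim 2^{-k/2}\|\vec{\tilde\theta}-A\grad\tilde u\|_{L_2(\Omega)^n}\lesssim 2^{-k/2}(\|u-\tilde u\|_\UU+\|\vec\theta-\vec{\tilde\theta}\|_\PP)$. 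For ${\bf r}_1$ the familiar two-stage argument applies. After replacing $f,h$ by $f_\eps,\vec h_\eps$ from \eqref{step1}, the assumptions \eqref{17} and $\Psi^\PP\subset H(\divv;\Omega)$ make $N(\tilde u)-f_\eps-\divv\vec{\tilde\theta}$ and $\vec{\tilde\theta}\cdot{\bf n}-\vec h_\eps\cdot{\bf n}$ piecewise polynomial on ${\mathcal T}(\Lambda,\eps)$ and on the induced boundary tiling, so the truncation in \eqref{step3} costs $\lesssim 2^{-k/2}\,\|\cdot\|_{\VV_1'}+\eps$, the additive $\eps$ being the norm of the data-oscillation part controlled by \eqref{step1}, and the dual norm being $\eqsim\|{\bf r}_1^{(\frac12)}\|+\eps$. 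Since $({\bf \tilde r}_1^{(\frac12)})^\top\Psi^{\VV_1}$ is piecewise polynomial by Proposition~\ref{prop4}, this error is carried to ${\bf r}_1$ through the \emph{bounded} block operator $[\langle DN(\tilde u)\Psi^\UU,\,\cdot\,\rangle_{L_2(\Omega)};\,\langle \Psi^\PP,\grad\,\cdot\,\rangle_{L_2(\Omega)^n}]$, while the second truncation of the now piecewise-polynomial integrand contributes a further $2^{-k/2}$ term; both are $\lesssim 2^{-k/2}(\|u-\tilde u\|_\UU+\|\vec\theta-\vec{\tilde\theta}\|_\PP)+\eps$ via Lemma~\ref{lem2}. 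The block ${\bf r}_3$ is handled identically over $\Gamma_D$ in \eqref{step2}, now using \eqref{substitute} for the decay of $\Psi^{\VV_2}$ in $\VV_2'$ and the boundedness of $w\mapsto\langle \Psi^\UU,w\rangle_{L_2(\Gamma_D)}$. Summing the three bounds gives the asserted estimate.

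For the complexity, ${\mathcal T}(\Lambda,\eps)={\mathcal T}(\Lambda)\oplus{\mathcal T}(\eps)$ has cardinality $\lesssim\#{\mathcal T}(\Lambda)+\#{\mathcal T}(\eps)\lesssim\#\Lambda+\eps^{-1/s}$ by Proposition~\ref{prop4} and the hypothesis on ${\mathcal T}(\eps)$, and every tree $\Lambda^\ast({\mathcal T},k)$ arising satisfies $\#\Lambda^\ast({\mathcal T},k)\lesssim\#{\mathcal T}$. Each individual operation --- the multi-to-single-scale transforms of Proposition~\ref{prop4}, the application of the sparse single-scale mass, stiffness and gradient matrices, and the transposed transforms --- is linear in the sizes of the tilings and trees involved, so the whole evaluation of ${\bf \tilde r}_1+{\bf \tilde r}_2+{\bf \tilde r}_3$ costs ${\mathcal O}(\#\Lambda+\eps^{-1/s})$. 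Finally, choosing $k$ so large that the constant multiplying $2^{-k/2}$ drops below the prescribed $\varsigma$ turns the estimate into exactly the bound required in Condition~\ref{nonlineareval*}.

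I expect the genuine difficulty to lie not in the bookkeeping above but in the decay estimates of Appendix~\ref{Sdecay}: one must establish \emph{geometric} decay, with the precise rate $2^{-k/2}$, of $\langle \psi^\ast_\lambda,w\rangle$ and of the variants $\langle A\grad\psi^\UU_\lambda,w\rangle$, $\langle\psi^\PP_\lambda,\grad w\rangle$ and $\langle\psi^{\VV_2}_\lambda,w\rangle$ as the level $|\lambda|$ exceeds the local tiling level, \emph{measured in the correct dual norms} $\VV_1'$, $\VV_2'$ and $L_2(\Omega)^n$, using only \emph{one} vanishing moment together with \eqref{substitute}. The secondary point I would check with care is that the intermediate functions $({\bf \tilde r}_1^{(\frac12)})^\top\Psi^{\VV_1}$ and $({\bf \tilde r}_3^{(\frac12)})^\top\Psi^{\VV_2}$ really are piecewise polynomial with respect to tilings of cardinality $\lesssim\#\Lambda+\eps^{-1/s}$, so that the second truncations in \eqref{step3} and \eqref{step2} are both legitimate and cheap.
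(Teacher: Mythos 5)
Your proposal is correct and follows essentially the same route as the paper's proof: the same decomposition of the error into first-stage truncation errors carried through the bounded block operators $\bigl[\langle DN(\tilde u)\Psi^\UU,\cdot\rangle;\langle\Psi^\PP,\grad\cdot\rangle\bigr]$ and $\bigl[\langle\Psi^\UU,\cdot\rangle_{L_2(\Gamma_D)};0\bigr]$ plus second-stage truncation errors, each bounded by the decay estimates of Appendix~\ref{Sdecay} (Propositions~\ref{prop1}--\ref{prop6}) and converted to $\|u-\tilde u\|_\UU+\|\vec\theta-\vec{\tilde\theta}\|_\PP$ via Lemma~\ref{lem2}, with the identical linear-complexity accounting through ${\mathcal T}(\Lambda,\eps)$ and the multi-to-single-scale transforms of Proposition~\ref{prop4}. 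The two checkpoints you flag at the end (the correct dual norms in the decay estimates, and that the intermediate functions $(\tilde{\bf r}^{(\frac12)}_1)^\top\Psi^{\VV_1}$, $(\tilde{\bf r}^{(\frac12)}_3)^\top\Psi^{\VV_2}$ are piecewise polynomial on tilings of size $\lesssim\#\Lambda+\eps^{-1/s}$) are exactly the points the paper settles by the appendix propositions and by Proposition~\ref{prop4} combined with $\#\Lambda^\ast({\mathcal T},k)\lesssim\#{\mathcal T}$.
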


Before giving the proof of this theorem, let us discuss matters related to step \eqref{step1}.
First of all,  {\em existence} of tilings ${\mathcal T}(\eps)$ as mentioned in the theorem is guaranteed. Indeed, because of $[{\bf u}^\top\!,\bm{\theta}^\top]^\top  \in \cA^s$, given $C>0$, for any $\eps>0$ there exists a $[{\bf \tilde{u}}_\eps^\top\!,\bm{\tilde{\theta}}_\eps^\top]^\top \in \ell_2(\Lambda_\eps)$, where $\Lambda_\eps$ is admissible and $\#\Lambda_\eps \lesssim \eps^{-1/s}$, such that $\|[{\bf u}^\top\!,\bm{\theta}^\top]^\top -[{\bf \tilde{u}}_\eps^\top\!,\bm{\tilde{\theta}}_\eps^\top]^\top\|\leq \eps/C$. By taking a suitable $C$, 
from Lemma~\ref{lem2}, \eqref{def_Q} and \eqref{8} we infer that with 
$(\tilde{u}_\eps,\vec{\tilde{\theta}}_\eps):=[{\bf \tilde{u}}_\eps^\top\!,\bm{\tilde{\theta}}_\eps^\top] \Psi$,
$$
\|\tilde{u}_\eps-g\|_{H^{\frac{1}{2}}(\Gamma_D)}+\|v_1\mapsto \int_\Omega (N(\tilde{u}_\eps)-\divv \vec{\tilde{\theta}}_\eps-f)v_1\,dx+\int_{\Gamma_N} (\vec{\tilde{\theta}}_\eps\cdot {\bf n}-h) v_1 \,ds\|_{\VV_1'} \leq \eps.
$$
Since $\tilde{u}_\eps \in \cP_m(\tria(\Lambda_\eps)) \cap C(\Omega)$, $N(\tilde{u}_\eps)-\divv \vec{\tilde{\theta}}_\eps \in \cP_m(\tria(\Lambda_\eps))$, $\vec{\tilde{\theta}}_\eps \in  \cP_m(\tria(\Lambda_\eps))^n$, and $\# \tria(\Lambda_\eps) \lesssim \# \Lambda_\eps$, the tiling ${\mathcal T}(\eps):= \tria(\Lambda_\eps)$ satisfies the assumptions.

Loosely speaking, this result can be rephrased by saying that if the solution of $D{\bf Q}([{\bf u}^\top\!,\bm{\theta}^\top]^\top)=0$ is in $\cA^s$, then so is the forcing function $(f,g,h)$. 
This is not automatically true, cf. \cite{45.47} for a discussion in the adaptive finite element context, but in the current setting it is a consequence of the fact that, thanks to assumption \eqref{8},
the first order partial differential operators apply to the wavelet bases $\Psi^\ast$ for $\ast \in \{\UU,\PP_1,\ldots,\PP_n,\VV_1,\VV_2\}$ in `mild' sense (the result of the application of each of these operators lands in $L_2$-space).

Knowing that a suitable $\tria(\eps)$ exists is different from knowing how to construct it.
For our convenience thinking of $g=h=0$, and so $\UU=\VV_1=H^1_0(\Omega)$, assuming that $f \in L_2(\Omega)$
one has $\inf_{f_\eps \in \cP_m(\tria)} \|f-f_\eps\|^2_{H^{-1}(\Omega)} \lesssim \mathrm{osc}(f,\tria)^2:=\sum_{\omega \in \tria} \diam(\omega)^2 \inf_{f_\omega \in \cP_m(\omega)} \|f-f_\omega\|^2_{L_2(\omega)}$.
Ignoring quadrature issues, for any partition $\tria$, $\mathrm{osc}(f,\tria)$ is computable. A quasi-minimal partition $\tria(\eps)$ such that $\mathrm{osc}(f,\tria(\eps)) \lesssim \eps$ can be computed using 
the Thresholding Second Algorithm from \cite{20.5}. Now the assumption to be added to Theorem~\ref{thm2} is that for such a partition, $\# \tria(\eps) \lesssim \eps^{-1/s}$.


Note that it is nowhere needed to explicitly approximate the forcing functions by approximating their wavelet expansions.


\begin{proof}[Proof of Theorem~\ref{thm2}] By construction we have
\begin{align*}
D{\bf Q}([{\bf \tilde{u}}^\top\!,\bm{\tilde{\theta}}^\top]^\top)-({\bf \tilde{r}}_1+{\bf \tilde{r}}_2+{\bf \tilde{r}}_3)
=\left[\begin{array}{@{}c@{}}  
\langle DN(\tilde{u}) \Psi^{\UU},  \Psi^{\VV_1} \rangle_{L_2(\Omega)}
 \\
\langle \Psi^{\PP},\grad \Psi^{\VV_1} \rangle_{L_2(\Omega)^n}
\end{array} \right] ({\bf r}^{(\frac{1}{2})}_1-{\bf \tilde{r}}^{(\frac{1}{2})}_1)
\\
+ {\bf r}_1-{\bf \tilde{r}}_1
+ {\bf r}_2-{\bf \tilde{r}}_2+
\left[\begin{array}{@{}c@{}} \langle \Psi^\UU,\Psi^{\VV_2}\rangle_{L_2(\Gamma_D)}\\ 0_{\vee_\PP} \end{array}\right]
({\bf r}^{(\frac{1}{2})}_3-{\bf \tilde{r}}_3^{(\frac{1}{2}})+{\bf r}_3-{\bf \tilde{r}}_3.
\end{align*}
From $\Psi^{\UU}$, $\Psi^{\PP}$, and $\Psi^{\VV_1}$ being Riesz bases, and $N:\UU \supset \dom(N) \rightarrow \VV_1'$ being continuously differentiable at $u$, one infers that
$
 \left[\begin{array}{@{}c@{}}  
\langle DN(\tilde{u}) \Psi^{\UU},  \Psi^{\VV_1} \rangle_{L_2(\Omega)}
 \\
\langle \Psi^{\PP},\grad \Psi^{\VV_1} \rangle_{L_2(\Omega)^n}
\end{array} \right]\in \cL(\ell_2(\vee_{\VV_1}),\ell_2(\vee))$,
with a norm that is bounded uniformly in $\tilde{u}$ from a neighbourhood of $u$.
Similarly, from $u \mapsto u|_{\Gamma_D} \in \cL(\UU,H^{\frac{1}{2}}(\Gamma_D))$, and $\UU$ and $\VV_2$ being Riesz bases for $\UU$ and $H^{\frac{1}{2}}(\Gamma_D)'$, respectively, we infer that $\left[\begin{array}{@{}c@{}} \langle \Psi^\UU,\Psi^{\VV_2}\rangle_{L_2(\Gamma_D)}\\ 0 \end{array}\right] \in \cL(\ell_2(\vee_{\VV_2}),\ell_2(\vee))$.
We conclude that
\begin{equation} \label{202}
\begin{split}
\|D{\bf Q}([&{\bf \tilde{u}}^\top\!,\bm{\tilde{\theta}}^\top]^\top)-({\bf \tilde{r}}_1+{\bf \tilde{r}}_2+{\bf \tilde{r}}_3)\| \lesssim \\
&
\|{\bf r}^{(\frac{1}{2})}_1-{\bf \tilde{r}}^{(\frac{1}{2})}_1\|+ \|{\bf r}_1-{\bf \tilde{r}}_1\| + \|{\bf r}_2-{\bf \tilde{r}}_2\|+\|{\bf r}^{(\frac{1}{2})}_3-{\bf \tilde{r}}^{(\frac{1}{2})}_3\|+ \|{\bf r}_3-{\bf \tilde{r}}_3\|.
\end{split}
\end{equation}
We bound all terms at the right-hand side.

With $f_\eps$, $\vec{h}_\eps$ from \eqref{step1}, using that $\Psi^{\VV_1}$ is a Riesz basis, we have that
\begin{align} \label{200}
&\|{\bf r}^{(\frac{1}{2})}_1-{\bf \tilde{r}}^{(\frac{1}{2})}_1\| \lesssim \eps+\\ \nonumber
\Big\|\Big(\langle \Psi^{\VV_1}, N(\tilde{u})-&f_\eps- \divv \vec{\tilde{\theta}}\rangle_{L_2(\Omega)}+\langle \Psi^{\VV_1}, (\vec{\tilde{\theta}}-\vec{h}_\eps)\cdot {\bf n}\rangle_{L_2(\Gamma_N)}\Big)\big|_{\vee_{\VV_1}\setminus \Lambda^{\VV_1}({\mathcal T}(\Lambda,\eps),k)} \Big\|.
\end{align}
From $N(\tilde{u})-f_\eps- \divv \vec{\tilde{\theta}} \in \cP_m({\mathcal T}(\Lambda,\eps))$, and $\vec{\tilde{\theta}}-\vec{h}_\eps \in \cP_m({\mathcal T}(\Lambda,\eps))^n$, Proposition~\ref{prop1} shows that the norm at the right-hand side of \eqref{200} is $\lesssim 2^{-k} \big\|v_1 \mapsto 
\int_\Omega \vec{\tilde{\theta}} \cdot \grad v_1+(N(\tilde{u})-f_\eps)v_1 \,dx   -\int_{\Gamma_N} \vec{h}_\eps\cdot {\bf n} v_1 \,ds
\big\|_{\VV_1'}$.
Again by using \eqref{step1}, we infer that
\begin{equation} \label{b1}
\|{\bf r}^{(\frac{1}{2})}_1-{\bf \tilde{r}}^{(\frac{1}{2})}_1\|\lesssim \eps+ 2^{-k} \big\|v_1 \mapsto 
\int_\Omega \vec{\tilde{\theta}} \cdot \grad v_1+(N(\tilde{u})-f)v_1 \,dx   -\int_{\Gamma_N} h v_1 \,ds
\big\|_{\VV_1'}.
\end{equation}

Thanks to
$\grad \tilde{r}_1^{(\frac{1}{2})} \in \cP_m({\mathcal T}(\Lambda^{\VV_1}({\mathcal T}(\Lambda,\eps),k)))^n$, an application of Proposition~\ref{prop2} (first estimate) shows that
\begin{align*}
\|{\bf r}_{12}-{\bf \tilde{r}}_{12}\| &\lesssim 2^{-k/2}\|\tilde{r}_1^{(\frac{1}{2})}\|_{\VV_1} \eqsim 
 2^{-k/2} \|{\bf \tilde{r}}^{(\frac{1}{2})}_{1}\|.
\end{align*}
Our assumptions on $N$ show that $DN(\tilde{u})w$ is of the form $p_1(\tilde{u})w+\vec{p}_2(\tilde{u}) \cdot \nabla w$ for some piecewise polynomials $p_1$ and $p_2$ in $\tilde{u}$ and $x$ w.r.t. $\{\omega\colon \omega \in \cO_\Omega,|\omega|=0\}$, where moreover $w \mapsto p_1(\tilde{u})w \in \cL(\UU,\VV_1')$, $\vec{w} \mapsto \vec{p}_2(\tilde{u}) \cdot\vec{w} \in \cL(L_2(\Omega)^n,\VV_1')$, uniformly in $\tilde{u}$ in a neighborhood of $u \in \UU$.
Consequently, applications of  Propositions~\ref{prop3}-\ref{prop2} (second estimate) show that
\begin{align*}
\|{\bf r}_{11}-{\bf \tilde{r}}_{11}\| &\lesssim 2^{-k} \|p_1(\tilde{u}) \tilde{r}_{1}^{(\frac{1}{2})}\|_{\UU'}+2^{-k/2}\|\vec{p}_2(\tilde{u}) \tilde{r}_{1}^{(\frac{1}{2})}\|_{L_2(\Omega)^n} \\
&\lesssim 
2^{-k/2} \|\tilde{r}_{1}^{(\frac{1}{2})}\|_{\VV_1} \eqsim 2^{-k/2} \|{\bf \tilde{r}}^{(\frac{1}{2})}_1\|.
\end{align*}
Now use that $\|{\bf \tilde{r}}^{(\frac{1}{2})}_1\| \leq \|{\bf r}^{(\frac{1}{2})}_1-{\bf \tilde{r}}^{(\frac{1}{2})}_1\|+\|{\bf r}^{(\frac{1}{2})}_1\|$, and $\|{\bf r}^{(\frac{1}{2})}_1\| \lesssim 
\big\|v_1 \mapsto 
\int_\Omega \vec{\tilde{\theta}} \cdot \grad v_1+(N(\tilde{u})-f)v_1 \,dx   -\int_{\Gamma_N} h v_1 \,ds
\big\|_{\VV_1'}$, to conclude that
\begin{equation} \label{b2}
\|{\bf r}_1-{\bf \tilde{r}}_1\|\lesssim2^{-k/2}\Big(\eps+ \big\|v_1 \mapsto 
\int_\Omega \vec{\tilde{\theta}} \cdot \grad v_1+(N(\tilde{u})-f)v_1 \,dx   -\int_{\Gamma_N} h v_1 \,ds
\big\|_{\VV_1'}\Big).
\end{equation}

Thanks to $\vec{\tilde{\theta}}-A \grad \tilde{u}\in \cP_m({\mathcal T}(\Lambda))^n$ by \eqref{16}, and $A^\top$ being piecewise polynomial, an application of Proposition~\ref{prop2} shows that
\begin{equation} \label{b3}
\|{\bf r}_2-{\bf \tilde{r}}_2\|  \lesssim 2^{-k/2} \|\vec{\tilde{\theta}}-A \grad \tilde{u}\|_{L_2(\Omega)^n}.
\end{equation}

With $g_\eps$ from \eqref{step1}, using that $\Psi^{\VV_1}$ is a Riesz basis for $H^{-\frac{1}{2}}(\Gamma_D)$, we have that
\begin{equation} \label{201}
\|{\bf r}^{(\frac{1}{2})}_3-{\bf \tilde{r}}^{(\frac{1}{2})}_3\| \lesssim \eps+
\| \langle \Psi^{\VV_2},\tilde{u} -g_\eps\rangle_{L_2(\Gamma_D)}|_{\vee_{\VV_2} \setminus \Lambda^{\VV_2}({\mathcal T}(\Lambda,\eps)\cap \Gamma_D,k)}\|.
\end{equation}
From $\tilde{u}|_{\Gamma_D}-g_\eps \in \cP_m({\mathcal T}(\Lambda,\eps)\cap \Gamma_D) \cap C(\Gamma_D)$, Proposition~\ref{prop5} shows that the norm at the right-hand side of \eqref{201} is $\lesssim 2^{-k/2} \|\tilde{u}-g_\eps\|_{H^{\frac{1}{2}}(\Gamma_D)}$.
Again by using \eqref{step1}, we infer that
\begin{equation} \label{b4}
\|{\bf r}^{(\frac{1}{2})}_3-{\bf \tilde{r}}^{(\frac{1}{2})}_3\|\lesssim \eps+2^{-k/2} \|\tilde{u}-g\|_{H^{\frac{1}{2}}(\Gamma_D)}.
\end{equation}

Thanks to $\tilde{r}_3^{(\frac{1}{2})} \in \cP_m(
{\mathcal T}_{\Gamma_D}(\Lambda^{\VV_2}({\mathcal T}(\Lambda,\eps)\cap \Gamma_D,k)))$, Proposition~\ref{prop6} shows that
$\|{\bf r}_3-{\bf \tilde{r}}_3\| \lesssim 2^{-k/2}\|\tilde{r}_3^{(\frac{1}{2})}\|_{H^{-\frac{1}{2}}(\Gamma_D)} \eqsim 
2^{-k/2}\|{\bf \tilde{r}}^{(\frac{1}{2})}_3\|$.
Now use that $\|{\bf \tilde{r}}^{(\frac{1}{2})}_3\| \leq \|{\bf r}^{(\frac{1}{2})}_3-{\bf \tilde{r}}^{(\frac{1}{2})}_3\|+\|{\bf r}^{(\frac{1}{2})}_3\|$, and $\|{\bf r}^{(\frac{1}{2})}_3\| \lesssim \|\tilde{u}-g\|_{H^{\frac{1}{2}}(\Gamma_D)}$ to conclude that 
\begin{equation} \label{b5}
\|{\bf r}_3-{\bf \tilde{r}}_3\|\lesssim 2^{-k/2}\big(\eps+ \|\tilde{u}-g\|_{H^{\frac{1}{2}}(\Gamma_D)}\big).
\end{equation}

By collecting the  upper bounds \eqref{b1}--\eqref{b5} derived for all five terms at the right-hand side of \eqref{202},
and by using Lemma~\ref{lem2} in combination with the least squares functional given in \eqref{def_Q}, the proof of the first statement is completed.

To bound the cost of the computations, we consider the computation of ${\bf \tilde{r}}^{(\frac{1}{2})}_1$.
First, find a representation of $N(\tilde{u})-\divv \vec{\tilde{\theta}}$ as an element of  $\cP_m({\mathcal T}(\Lambda,\eps))$ by applying multi- to single-scale transforms. 
For each tile $\omega \in {\mathcal T}(\Lambda^{\VV_1}({\mathcal T}(\Lambda,\eps),k))$, and for $\phi$ running over some basis of $\cP_m(\omega)$, compute
$\langle \phi, N(\tilde{u})-f-\divv \vec{\tilde{\theta}}\rangle_{L_2(\omega)}$.
From this, compute $[\langle \psi_\lambda^{\VV_1}, N(\tilde{u})-f-\divv \vec{\tilde{\theta}}\rangle_{L_2(\Omega)}]_{\lambda \in \Lambda^{\VV_1}({\mathcal T}(\Lambda,\eps),k)}$ by applying a transpose of a multi- to single-scale transform. Similar steps yield 
$[\langle \psi_\lambda^{\VV_1},  \vec{\tilde{\theta}}\cdot {\bf n}-h\rangle_{L_2(\Gamma_N)}]_{\lambda \in \Lambda^{\VV_1}({\mathcal T}(\Lambda,\eps),k)}$.
The total cost involved in computing ${\bf \tilde{r}}^{(\frac{1}{2})}_1$ is
bounded by a multiple of $\# {\mathcal T}(\Lambda,\eps) \lesssim \#\Lambda +\eps^{-1/s}$ operations.

Since fully analogous considerations apply to bounding the cost of the computations of ${\bf \tilde{r}}_1$, ${\bf \tilde{r}}_2$, ${\bf \tilde{r}}^{(\frac{1}{2})}_3$, and ${\bf \tilde{r}}_3$, the proof is completed.
\end{proof}


\section{Numerical results} \label{Snumerics}
For $\Omega \subset \R^2$ being the L-shaped domain $(0,1)^2\setminus [\frac{1}{2},1)^2$, we consider the semi-linear boundary value problem
\begin{equation} \label{bvp_simple}
 \left\{
\begin{array}{r@{}c@{}ll}
-\Delta u+ u^3 &\,\,=\,\,& f &\text{ on } \Omega,\\
u &\,\,=\,\,& 0 &\text{ on } \partial\Omega,
\end{array}
\right.
\end{equation}
where, for simplicity, $f=1$ (to test our code we also tried some right hand sides corresponding to some fabricated polynomial solutions $u$). With $\UU=H^1_0(\Omega)=\VV$, $\PP=L_2(\Omega)^2$, we applied the {\bf awgm} (Algorithm~\ref{coordinates}), with ${\bf F}$ reading as  $D{\bf Q}$, for the adaptive solution of $[{\bf u}^\top\!,\!\bm{{\theta}}^\top]^\top$ from
\[
\begin{split}
 \!\!D{\bf Q}([{\bf u}^\top\!,\!\bm{{\theta}}^\top]^\top)\!=&\!
\left[\begin{array}{@{}l@{}}
\langle \partial_1 \Psi^{\UU}, \partial_1  {u}-{\theta}_1 \rangle_{L_2(\Omega)}+\langle \partial_2 \Psi^{\UU}, \partial_2  {u}-{\theta}_2 \rangle_{L_2(\Omega)}\\
\langle \Psi^{\PP_1},  {\theta}_1- \partial_1 {u} \rangle_{L_2(\Omega)}\\
\langle \Psi^{\PP_2}, {\theta}_2 -\partial_2  {u} \rangle_{L_2(\Omega)}\\
\end{array}
\right]\\
&+\!\!\left[\!\begin{array}{@{}c@{}}  
\langle  \Psi^{\UU},  3{{u}}^2 \Psi^{\VV} \rangle_{L_2(\Omega)}
 \\
\langle \Psi^{\PP_1},\partial_1 \Psi^{\VV} \rangle_{L_2(\Omega)}\\
\langle \Psi^{\PP_2},\partial_2 \Psi^{\VV} \rangle_{L_2(\Omega)}
\end{array}\! \right] \!\!
\big\langle \Psi^{\VV}, {{u}}^3\!-\!f\!-\!\divv \vec{{\theta}} \big\rangle_{L_2(\Omega)}={\bf 0},
\end{split}
\]
where $u:={\bf u}^\top \Psi^\UU$, $\theta_i:=\bm{\theta}_i^\top \Psi^{\PP_i}$.

Here we equipped $\PP_i$ ($i=1,2$) with the continuous piecewise linear three-point wavelet basis from \cite{249.70}, the space $\VV$ with the same basis (obviously scaled differently, and with homogeneous boundary conditions incorporated), and $\UU$ with a newly developed continuous piecewise quadratic wavelet basis. 
These bases can be applied on any polygon, and they
 satisfy all assumptions \eqref{w1}-\eqref{w8}. In particular all wavelets except possibly those `near' the Dirichlet boundary have one vanishing moment.
For each basis, to each wavelet that is not on the coarsest level we associate one parent on the next coarsest level according to Definition~\ref{def1}.
For any $\ell \in \N_0$ the subsets of the bases consisting of all wavelets up to some level span exactly the space of continuous piecewise linears, continuous piecewise linears zero at $\partial\Omega$, or continuous piecewise quadratics zero at $\partial\Omega$, respectively, w.r.t. the subdivision of $\Omega$ as indicated in Figure~\ref{fig2}.
\begin{figure}[h]
\begin{center}
\input{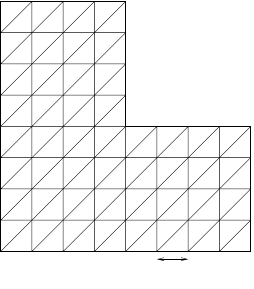_t}
\end{center}
\caption{Meshes w.r.t. which the wavelets are piecewise polynomial.}
\label{fig2}
\end{figure}

On a bounded domain, the three-point basis has actually not be proven to be stable in $L_2(\Omega)$. Although alternative bases are available whose Riesz basis property has been proven, we opted for the three-point basis, because of its efficient implementation and because numerical results indicated that it is stable. 
In Figure~\ref{fig3}, numerically computed condition numbers are given of sets of all wavelets up to some level.
\begin{figure}[h]
\begin{center}
\hspace*{-1.5em}
\includegraphics[scale=0.22]{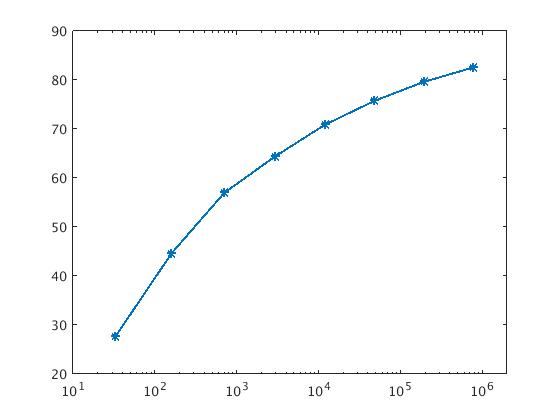} \hspace*{-1.5em}
\includegraphics[scale=0.22]{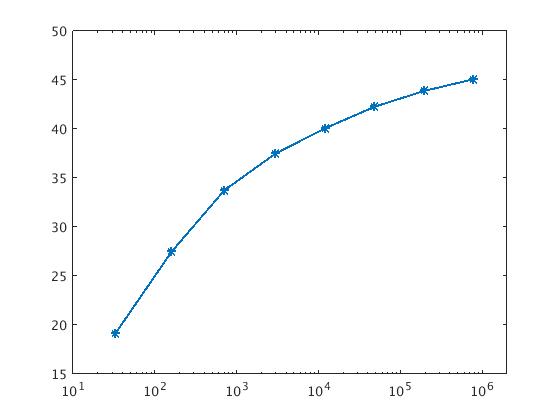}\hspace*{-1em}
\includegraphics[scale=0.22]{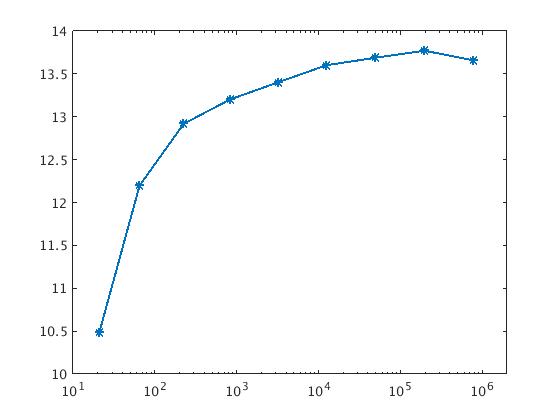}
\hspace*{-1.5em}
\end{center}
\caption{Condition numbers of $\langle \nabla \Psi^\UU_N,\nabla \Psi^\UU_N\rangle_{L_2(\Omega)^2}$, $\langle \nabla \Psi^\VV_N,\nabla \Psi^\VV_N\rangle_{L_2(\Omega)^2}$, and $\langle \Psi^{\PP_i}_N, \Psi^{\PP_i}_N\rangle_{L_2(\Omega)}$, where $\Psi^*_N$ is the subset 
of all wavelets from $\Psi^*$ up to some level, where $N$ denotes its cardinality.}
\label{fig3}
\end{figure}

The continuous piecewise quadratic wavelets are biorthogonal ones with the `dual multiresolution analysis' being the sequence of continuous piecewise linears, zero at the $\partial\Omega$,
w.r.t. one additional level of refinement. Details of this basis construction will be reported elsewhere.

We performed the approximate evaluation of $D{\bf Q}(\cdot)$ according to \eqref{step1}-\eqref{step2} and Thm.~\ref{thm2} in Sect.~\ref{Sapproxeval}
with some simplifications because of the current homogeneous Dirichlet boundary conditions and sufficiently smooth right-hand side
(\eqref{step1} and \eqref{step2} are void, and in \eqref{step3} the boundary term is void).
Taking the parameter $k=1$, it turns out that the approximate evaluation is sufficiently accurate to be used in Step (R) of {\bf awgm} (so we do not perform a loop),
as well in the simple fixed point iteration \eqref{fixedpoint} with damping $\omega=\frac{1}{4}$ that we use for Step (G).
We took the parameter $\gamma$ in Step (G) equal to $0.15$ (more precisely, for stopping the iteration we checked whether the norm of the approximate residual, restricted to $\Lambda_{i+1}$, is less or equal to $0.15 \|{\bf r}_i\|$).

For the bulk chasing, i.e. Step (B), we simply collected the indices of the largest entries of the approximate residual ${\bf r}_i$ until the norm of the residual restricted to those indices is not less than $0.4 \|{\bf r}_i\|$ (i.e. $\mu_1=0.4$), and then, after adding the indices from the current $\bm{\Lambda}_i$
to this set,  we expand it to an admissible set (cf. Definition~\ref{def_admissible}).
Although this simple procedure is neither guaranteed to satisfy Condition~\ref{bulk} nor
(B) for some constant $0<\mu_0\leq \mu_1$, we observed that it works satisfactory in practice.

In view of the orders $3$ and $2$ of the bases for $\UU$ and $\PP$, and the fact the PDE is posed in $n=2$ space dimensions, the best possible convergence rate that can be expected is $\min(\frac{3-1}{2},\frac{2-0}{2})=1$. 
In Figure~\ref{fig4}, we show the norm of the approximate residual vs. the total number of wavelets underlying the approximation for $(u,\vec{\theta})$.
\begin{figure}[h]
\begin{center}
\includegraphics[scale=0.4]{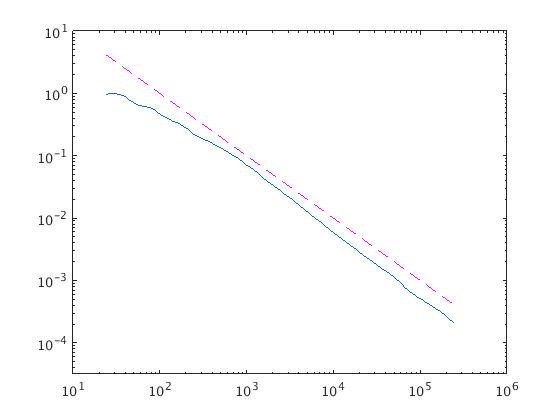}
\end{center}
\caption{Norm of the (approximate) residual, normalized by the norm of the initial residual,  generated by the {\bf awgm} vs. the total number of wavelets. The dotted line indicates the best possible slope $-1$.}
\label{fig4}
\end{figure}
The norm of the approximate residual is proportional to the $\UU \times \PP$-norm of the error in the approximation for $(u,\vec{\theta})$. We conclude that it decays with the best possible rate. Moreover, we observed that the computing times scale linearly with the number of unknowns.
Throughout the iteration, the number of wavelets for the approximation for $u$ is of the same order as the number of wavelets for the approximation for $\vec{\theta}$.
The maximum level that is reached at the end of the computations is $26$ for $u$ and $28$ for $\vec{\theta}$).
An approximate solution is illustrated in Figure~\ref{fig5}.
\begin{figure}[h]
\begin{center}
\includegraphics[scale=0.1]{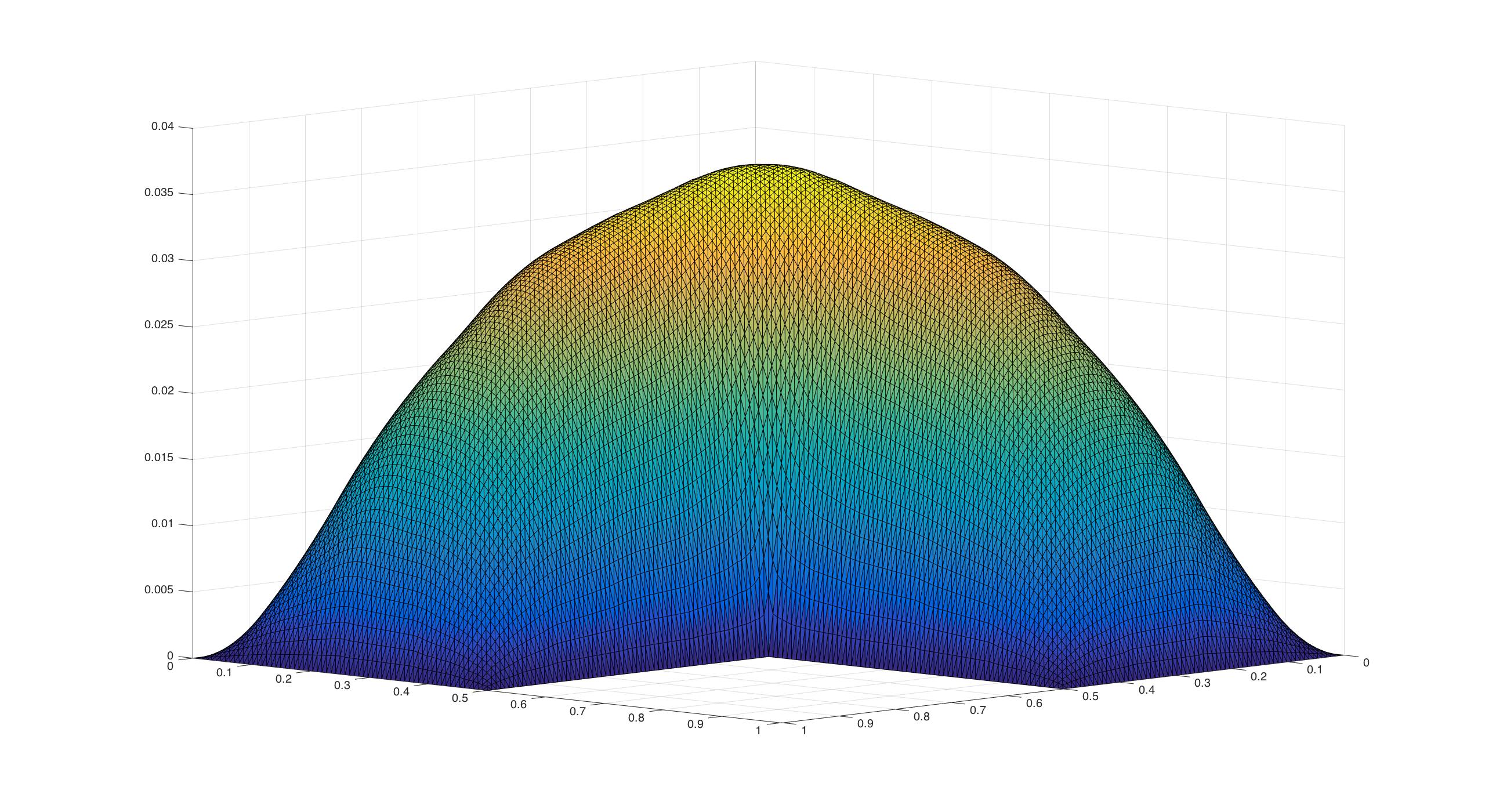}
\end{center}
\caption{The approximation for $u$ from the span of $202$ wavelets.}
\label{fig5}
\end{figure}
Centers of the supports of the wavelets that were selected for the approximation for $u$ are illustrated in Figure~\ref{fig6}.
\begin{figure}[h]
\begin{center}
\includegraphics[scale=0.25]{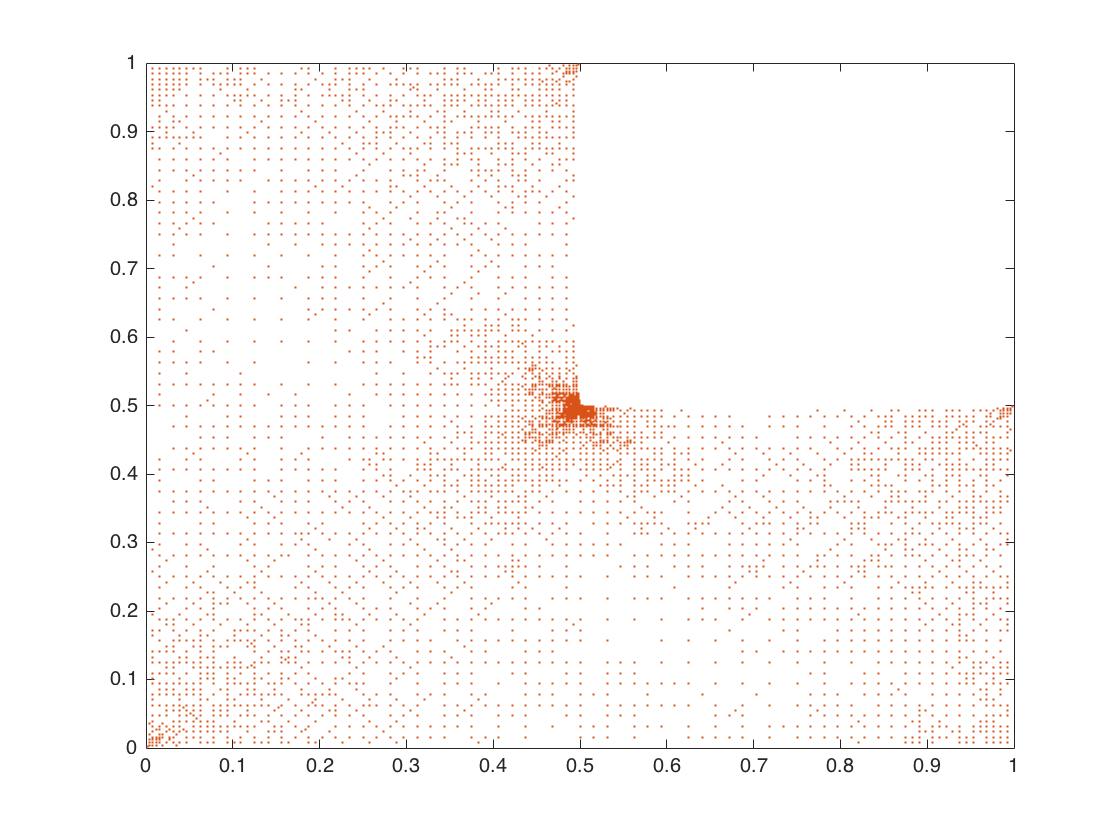}
\end{center}
\caption{Centers of the supports of the first 10366 wavelets for the approximation for $u$ that were selected by the {\bf awgm}.}
\label{fig6}
\end{figure}

Finally, in order to get an impression of the condition number of the bi-infinite linearized normal equations that eventually we are solving, we consider
the Poisson equation, i.e. \eqref{bvp_simple} without the $u^3$ term.
We are interested in the spectral condition number of the `system matrix' given by
\begin{align} \label{100}
\left[\begin{array}{ccc}
\langle \nabla \Psi^\UU, \nabla \Psi^\UU\rangle_{L_2(\Omega)^2} & -\langle \partial_1 \Psi^\UU,  \Psi^{\PP_1} \rangle_{L_2(\Omega)} &-\langle \partial_2 \Psi^\UU,  \Psi^{\PP_2} \rangle_{L_2(\Omega)} \\
-\langle  \Psi^{\PP_1}, \partial_1 \Psi^\UU  \rangle_{L_2(\Omega)} & \langle  \Psi^{\PP_1},  \Psi^{\PP_1}  \rangle_{L_2(\Omega)}  & 0\\
-\langle  \Psi^{\PP_2}, \partial_2 \Psi^\UU  \rangle_{L_2(\Omega)} & 0 &\langle  \Psi^{\PP_1},  \Psi^{\PP_1}  \rangle_{L_2(\Omega)}\\
\end{array}
\right]+ &\\ \nonumber
\left[\begin{array}{ccc} 0 & 0 & 0 \\ 0 &  
\langle  \Psi^{\PP_1}, \partial_1 \Psi^\VV  \rangle_{L_2(\Omega)} \langle \partial_1  \Psi^{\VV}, \Psi^{\PP_1}  \rangle_{L_2(\Omega)} &
\langle  \Psi^{\PP_1}, \partial_1 \Psi^\VV  \rangle_{L_2(\Omega)} \langle \partial_2  \Psi^{\VV}, \Psi^{\PP_2}  \rangle_{L_2(\Omega)}\\
 0 & 
\langle  \Psi^{\PP_2}, \partial_2 \Psi^\VV  \rangle_{L_2(\Omega)} \langle \partial_1  \Psi^{\VV}, \Psi^{\PP_1}  \rangle_{L_2(\Omega)} &
\langle  \Psi^{\PP_2}, \partial_2 \Psi^\VV  \rangle_{L_2(\Omega)} \langle \partial_2  \Psi^{\VV}, \Psi^{\PP_2}  \rangle_{L_2(\Omega)}\\
\end{array}
\right]&
\end{align}
To that end we numerically approximated the condition numbers of the finite square blocks of rows and columns with indices in $\Lambda$, with $\Lambda$ running over the wavelet index sets that were created by the {\bf awgm}. Even such a finite  block cannot be evaluated exactly, because it still involves the infinite collection $\Psi^\VV$. Given a $\Lambda$, we restricted this collection to the wavelets with indices in $\Lambda^\VV(\tria(\Lambda),k)$ as defined by Proposition~\ref{prop4} and Definition~\ref{partition-to-lambda}, where, as always, we take $k=1$.
The resulting matrix is exactly the one that we approximately invert in Step (G) by the fixed point iteration.
The computed condition numbers are given in Figure~\ref{fig7}.
\begin{figure}[h]
\begin{center}
\includegraphics[scale=0.3]{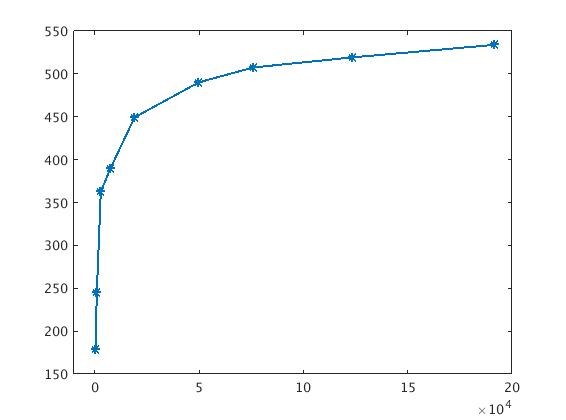}
\end{center}
\caption{Condition numbers of the (approximate) Galerkin system matrices vs. $\# \Lambda$.}
\label{fig7}
\end{figure}
We performed the same computation also for $k=2$, so with an enlarged set of wavelet indices from the basis $\Psi^\VV$, and found nearly indistinguishable results. 
We may conclude that for $\Lambda \rightarrow \infty$, the given numbers give accurate approximations for the 
condition number of the matrix in \eqref{100}.

\section{Stationary Navier-Stokes equations} \label{SNSE}
For $n \in \{2,3,4\}$, let $\Omega \subset \R^n$ be a bounded Lipschitz domain. The stationary Navier-Stokes equations in velocity--pressure formulation and with no-slip boundary conditions are given by
$$
\left\{
\begin{array}{rcll}
-\nu \triangle \vec{u}+(\vec{u}\cdot \grad)\vec{u}+\grad p & \!\!=\! \!& \vec{f} & \text{on } \Omega\\
\divv \vec{u} &\!\! = \!\!& g & \text{on } \Omega\\
\vec{u} & \!\!=\! \!& 0 & \text{on } \partial \Omega.
\end{array}
\right.
$$
In order to obtain, in any case in the linear Stokes case, results that hold uniformly in $\nu>0$, one may equip the spaces for velocities and pressure with $\nu$-dependent norms. The equivalent, but notationally more convenient approach that we will follow is to keep the standard norms, but to make the substitutions $\vec{\breve{u}}=\sqrt{\nu}\, \vec{u}$, $\breve{p}=\frac{1}{\sqrt{\nu}} p$, $\vec{\breve{f}}=\frac{1}{\sqrt{\nu}} \vec{f}$, and $\breve{g}=\sqrt{\nu} \, g$.
For convenience dropping the \raisebox{-0.3cm}{\huge \,$\breve{}\,$}-accents, the equations for the new unknowns read as
$$
\left\{
\begin{array}{rcll}
- \triangle \vec{u}+\nu^{-3/2} (\vec{u}\cdot \grad)\vec{u}+\grad p & \!\!=\! \!& \vec{f} & \text{on } \Omega\\
\divv \vec{u} &\!\! = \!\!& g & \text{on } \Omega\\
\vec{u} & \!\!=\! \!& 0 & \text{on } \partial \Omega.
\end{array}
\right.
$$

 In variational form they read as finding $(\vec{u},p) \in \framebox{$\UU :=H^1_0(\Omega)^n \times L_2(\Omega)/\R$}$ such that for some $(\vec{f},g) \in \UU'$,
$$
G(\vec{u},p)(\vec{v},q):=\int_\Omega \grad \vec{u} : \grad \vec{v}-p \divv\vec{v}+\nu^{-3/2}(\vec{u}\cdot \grad)\vec{u} \cdot \vec{v}+q (\divv  \vec{u} -g) -\vec{f}\cdot \vec{v}\,dx=0
$$
($(\vec{v},q) \in$ \framebox{$\VV:=\UU$}).

It is known that $G:\UU \rightarrow \UU'$, and that a solution $(\vec{u},p)$ exists (see e.g. \cite[Ch. IV]{75.4}).
Furthermore, $G$ is two times differentiable with its second derivative being constant.
We will {\em assume} that $DG(\vec{u},p) \in \cL(\UU,\UU')$ is a homeomorphism with its range, so that each of the conditions \eqref{r1}--\eqref{r3} from Sect.~\ref{S1} are satisfied.
The latter is known to hold true, with its range being equal to $\UU'$, when $\vec{f}$ is sufficiently small, in which case the solution $(\vec{u},p)$ is also unique (e.g. see \cite[Ch. IV]{75.4}).
For the linear case, so without the term $\nu^{-3/2} (\vec{u}\cdot \grad)\vec{u}$, thanks to our re-scaling, $DG(\vec{u},p)=G \in \Lis(\UU,\UU')$, and is independent of $\nu$.

Using the framework outlined in Sect.~\ref{S1}, we write this second order elliptic PDE as a first order system least squares problem.
There are different possibilities to do so.

\subsection{Velocity--pressure--velocity gradient formulation}
With \framebox{$\PP:=L_2(\Omega)^{n^2}$}, we define
$$
G_1 \in \cL(\PP,\UU'),\qquad G_2 \in \cL(\UU,\PP),
$$
by
$$
G_2 (\vec{u},p)=\grad \vec{u},\qquad (G_1 \underline{\theta})(\vec{v},q)=\int_\Omega \underline{\theta} : \grad \vec{v} \,d x
$$
The results from Sect.~\ref{S1} show that the solution $(\vec{u},p)$ can be found as the first components of the minimizer $(\vec{u},p, \underline{\theta}) \in  \UU \times \PP$ of
\begin{equation}  \label{def_Q2}
\begin{split}
Q(\vec{u},p, \underline{\theta}):={\textstyle \frac{1}{2}}\Big(&\big\|\vec{v} \mapsto 
\int_\Omega \underline{\theta} : \grad \vec{v}-p \divv\vec{v}+\nu^{-3/2} (\vec{u}\cdot \grad)\vec{u} \cdot \vec{v}-\vec{f}\cdot \vec{v}\|_{H^{-1}(\Omega)^n}^2+\\
&\|\divv \vec{u}-g\|_{L_2(\Omega)}^2+\|\underline{\theta}- \grad \vec{u}\|_{L_2(\Omega)^{n^2}}^2\Big),
\end{split}
\end{equation}
and so as the solution of the normal equations $DQ(\vec{u},p,\underline{\theta})=0$. 
Here we have used that on $H^1_0(\Omega)^n$, $\|\divv \cdot\|_{(L_2(\Omega)/\R)'}=\|\divv \cdot\|_{L_2(\Omega)}$.
Following \cite{23.5}, we call $\underline{\theta}=\nabla \vec{u}$ the velocity gradient.
As follows from Sect.~\ref{S1}, these normal equations are well-posed in the sense that they satisfy \eqref{38}--\eqref{41}.
This gives us an alternative, effortless proof for \cite[Thm.~3.1]{35.9302}.


To deal with the `unpractical'  norm on $H^{-1}(\Omega)^n$,  we equip $H^1_0(\Omega)^n$ 
 with some wavelet Riesz basis
$$
\Psi^{(\hat{H}^1_0)^n}=\{\psi_\lambda^{(\hat{H}^1_0)^n}\colon\lambda \in \vee_{(\hat{H}^1_0)^n}\},
$$
and replace, in the definition of $Q$,  the norm on its dual by the equivalent norm defined by
$\|\vec{h}(\Psi^{(\hat{H}^1_0)^n})\|$  for $\vec{h} \in H^{-1}(\Omega)^n$.

Next, after equipping $* \in \{H^1_0(\Omega)^n,L_2(\Omega)/\R,L_2(\Omega)^{n^2}\}$ with a Riesz basis
$\Psi^{*}=\{\psi_\lambda^{*}\colon\lambda \in \vee_{*}\}$, and so $H^1_0(\Omega)^n\times L_2(\Omega)/\R\times L_2(\Omega)^{n^2}$ with 
$$
\Psi:=(\Psi^{(H^1_0)^n},0_{L_2/\R},0_{L_2^{n^2}}) \cup (0_{(H^1_0)^n},\Psi^{L_2/\R},0_{L_2^{n^2}})\cup(0_{(H^1_0)^n},0_{L_2/\R},\Psi^{L_2^{n^2}}),
$$
with index set $\vee:=\vee_{(H^1_0)^n}\cup\vee_{L_2/\R}\cup \vee_{L_2^{n^2}}$, we apply the {\bf awgm} to the resulting system
\[
\begin{split}
& D{\bf Q}([{\bf {u}}^\top\!,{\bf {p}}^\top\!,\bm{{\theta}}^\top]^\top)=\left[\begin{array}{@{}c@{}} 
\langle \divv  \Psi^{(H^1_0)^n}, \divv \vec{{u}} -g  \rangle_{L_2(\Omega)}\\
0_{\vee_{L_2/\R}}\\
0_{\vee_{L_2^{n^2}}}
\end{array} \right] 
+
\left[\begin{array}{@{}c@{}}
\langle  \grad \Psi^{(H^1_0)^n},  \grad \vec{{u}}-\underline{{\theta}}\rangle_{L_2(\Omega)^{n^2}}\\
0_{\vee_{L_2/\R}}\\
\langle  \Psi^{L_2^{n^2}}, \underline{{\theta}} - \grad \vec{{u}}\rangle_{L_2(\Omega)^{n^2}}
\end{array} \right] +
\\
&\left[\begin{array}{@{}c@{}}  
\langle \frac{(\vec{{u}}\cdot\grad) \Psi^{(H^1_0)^n}+(\Psi^{(H^1_0)^n}\cdot\grad) \vec{{u}}}{\nu^{3/2}},\Psi^{(\hat{H}^1_0)^n}\rangle_{L_2(\Omega)^n}\\
-\langle \Psi^{L_2/\R},\divv \Psi^{(\hat{H}^1_0)^n} \rangle_{L_2(\Omega)} \\
\langle \Psi^{L_2^{n^2}}, \nabla \Psi^{(\hat{H}^1_0)^n} \rangle_{L_2(\Omega)^{n^2}} 
\end{array} \right] 
\Big\{\langle \Psi^{(\hat{H}^1_0)^n},{\textstyle \frac{(\vec{{u}}\cdot\grad) \vec{{u}}}{\nu^{3/2}}}-\vec{f}\rangle_{L_2(\Omega)^n}
\\& \hspace*{13em}
+\langle \nabla \Psi^{(\hat{H}^1_0)^n}, \underline{\theta}\rangle_{L_2(\Omega)^{n^2}}-
\langle \divv  \Psi^{(\hat{H}^1_0)^n}, p\rangle_{L_2(\Omega)}
\Big\}
=0.
\end{split}
\]

To express the three terms in $\vec{v} \mapsto \langle \vec{v},\nu^{-3/2}(\vec{{u}}\cdot\grad) \vec{{u}}-\vec{f}\rangle_{L_2(\Omega)^n}+
\langle \nabla \vec{v}, \underline{\theta}\rangle_{L_2(\Omega)^{n^2}}-
\langle \divv \vec{v}, p\rangle_{L_2(\Omega)}
\in H^{-1}(\Omega)^n$ w.r.t. one dictionary, similarly to Sect.~\ref{Sseemingly} we impose the additional, but in applications easily realizable conditions that  
\begin{equation} \label{88}
\Psi^{L_2/\R} \subset H^1(\Omega),\, \Psi^{L_2^{n^2}} \subset H(\divv;\Omega)^n.
\end{equation}
Then for finitely supported approximations $[{\bf \tilde{u}}^\top\!,{\bf \tilde{p}}^\top\!,\bm{\tilde{\theta}}^\top]^\top$ to $[{\bf u}^\top\!,{\bf p}^\top\!,\bm{\theta}^\top]^\top$, for $(\vec{\tilde{u}},\tilde{p},\underline{\tilde{\theta}}):=[{\bf \tilde{u}}^\top\!,{\bf \tilde{p}}^\top\!,\bm{\tilde{\theta}}^\top] \Psi \in H^1_0(\Omega)^n \times H^1(\Omega) \times H(\divv;\Omega)^n$, we have
\begin{equation} \label{290}
\framebox{$\begin{split}
& \!\!D{\bf Q}([{\bf \tilde{u}}^\top\!,{\bf \tilde{p}}^\top\!,\bm{\tilde{\theta}}^\top]^\top)\!=\!
\left[\begin{array}{@{}c@{}} 
\langle \divv  \Psi^{(H^1_0)^n}\!,\! \divv \vec{\tilde{u}}-g  \rangle_{L_2(\Omega)}\!\\
0_{\vee_{L_2/\R}}\\
0_{\vee_{L_2^{n^2}}}
\end{array} \!\right] 
\!+\!
\left[\!\begin{array}{@{}c@{}}
\langle  \grad \Psi^{(H^1_0)^n},  \grad \vec{\tilde{u}}-\underline{\tilde{\theta}} \rangle_{L_2(\Omega)^{n^2}}\\
0_{\vee_{L_2/\R}}\\
\langle  \Psi^{L_2^{n^2}}, \underline{\tilde{\theta}} - \grad \vec{\tilde{u}}\rangle_{L_2(\Omega)^{n^2}}
\end{array}\! \right] \!+\!\!\\
&\!\!\!\!\left[\begin{array}{@{}c@{}}  
\!\langle \frac{(\vec{\tilde{u}}\cdot\grad) \Psi^{(H^1_0)^n}+(\Psi^{(H^1_0)^n}\cdot\grad) \vec{\tilde{u}}}{\nu^{3/2}},\Psi^{(\hat{H}^1_0)^n}\!\rangle_{L_2(\Omega)^n}\\
-\langle \Psi^{L_2/\R},\divv \Psi^{(\hat{H}^1_0)^n} \rangle_{L_2(\Omega)} \\
-\langle \divv \Psi^{L_2^{n^2}}, \Psi^{(\hat{H}^1_0)^n} \rangle_{L_2(\Omega)^n} 
\end{array} \!\!\right] 
\!\!\langle \Psi^{(\hat{H}^1_0)^n}\!\!,{\textstyle \frac{(\vec{\tilde{u}}\cdot\grad) \vec{\tilde{u}}}{\nu^{3/2}}}\!-\!\vec{f}\!-\!\divv \underline{\tilde{\theta}}\!+\!\grad \tilde{p}
\rangle_{L_2(\Omega)^n}.\!
\end{split}$}\hspace*{-1em}
\end{equation}
Each of the terms  $\divv \vec{\tilde{u}}-g$, $\grad \vec{\tilde{u}}-\underline{\tilde{\theta}}$ , $\nu^{-3/2} (\vec{\tilde{u}}\cdot\grad) \vec{\tilde{u}}-\vec{f}-\divv \underline{\tilde{\theta}}+\grad \tilde{p}$
correspond, in strong form, to a term of the least squares functional, and therefore their norms can be bounded by a multiple of the norm of the residual, which is the basis of our approximate residual evaluation.

This approximate residual evaluation follows the same lines as with the elliptic problem from Sect.~\ref{Selliptic}.
Actually, things are easier here because we assume homogeneous boundary conditions.
Selecting the Riesz bases for the Cartesian products  $H^1_0(\Omega)^n$ and $L_2(\Omega)^{n^2}$ of canonical form,
we assume that all scalar-valued bases $\Psi^\ast$ for $\ast \in \{\hat{H}^1_0, H^1_0,L_2/\R,L_2\}$ satisfy the assumptions that were made in Sect.~\ref{Swavelets}, in particular \eqref{w1}--\eqref{w8}.
Let $\Lambda:=\supp [{\bf \tilde{u}}^\top\!,{\bf \tilde{p}}^\top\!,\bm{\tilde{\theta}}^\top]^\top$ be admissible, i.e., $\Lambda \cap \vee_*$ are trees.
\renewcommand{\theenumi}{s\arabic{enumi}}
\begin{enumerate}
\item \label{ss1} Find a tiling ${\mathcal T}(\eps) \subset \cO_\Omega$, such that
$$\inf_{\vec{f}_\eps \in \cP_m({\mathcal T}(\eps))^n,\,g_\eps \in \cP_m({\mathcal T}(\eps))/\R} \|\vec{f}-\vec{f}_\eps\|_{H^{-1}(\Omega)^n} +\|g-g_\eps\|_{L_2(\Omega)} \leq \eps.
$$
If $ [{\bf {u}}^\top\!,{\bf {p}}^\top\!,\bm{{\theta}}^\top]^\top \in \cA^s$, then such a tiling exists with $\# {\mathcal T}(\eps) \lesssim \eps^{-1/s}$.
Set ${\mathcal T}(\Lambda,\eps):={\mathcal T}(\Lambda) \oplus {\mathcal T}(\eps)$.
\item  
\begin{enumerate} \item
Approximate ${\bf r}^{(\frac{1}{2})}_1:=\langle \Psi^{(\hat{H}^1_0)^n},\nu^{-3/2} (\vec{\tilde{u}}\!\cdot\!\grad) \vec{\tilde{u}}-\vec{f}-\divv \underline{\tilde{\theta}}+\grad \tilde{p}
\rangle_{L_2(\Omega)^n}$ by ${\bf \tilde{r}}^{(\frac{1}{2})}_1:={\bf r}^{(\frac{1}{2})}_1|_{\Lambda^{(\hat{H}^1_0)^n}({\mathcal T}(\Lambda,\eps),k)}$. 
\item With $\tilde{r}_1^{(\frac{1}{2})}:=({\bf \tilde{r}}^{(\frac{1}{2})}_1)^\top \Psi^{(H^1_0)^n}$, approximate
$$
{\bf r}_1=\left[\begin{array}{@{}c@{}} {\bf r}_{11} \\ {\bf r}_{12}\\ {\bf r}_{13}\end{array} \right]:=
\left[\begin{array}{@{}c@{}}  
\langle \frac{(\vec{\tilde{u}}\cdot\grad) \Psi^{(H^1_0)^n}+(\Psi^{(H^1_0)^n}\cdot\grad) \vec{\tilde{u}}}{\nu^{3/2}},\tilde{r}_1^{(\frac{1}{2})}\rangle_{L_2(\Omega)^n}\\
-\langle \Psi^{L_2/\R},\divv \tilde{r}_1^{(\frac{1}{2})} \rangle_{L_2(\Omega)} \\
-\langle \divv \Psi^{L_2^{n^2}}, \tilde{r}_1^{(\frac{1}{2})} \rangle_{L_2(\Omega)^n} 
\end{array} \right] 
$$
by 
${\bf \tilde{r}}_1:={\bf r}_1|_{\Lambda({\mathcal T}(\Lambda^{(H^1_0)^n}({\mathcal T}(\Lambda,\eps),k)),k)}$.
\end{enumerate}
\item 
Approximate
$$
{\bf r}_2=\left[\begin{array}{@{}c@{}} {\bf r}_{21} \\ {\bf r}_{22}\\ {\bf r}_{23}\end{array} \right]:=\left[\begin{array}{@{}c@{}} 
\langle \divv  \Psi^{(H^1_0)^n}, \divv \vec{\tilde{u}}-g  \rangle_{L_2(\Omega)}\!\\
0_{\vee_{L_2/\R}}\\
0_{\vee_{L_2^{n^2}}}
\end{array} \!\right] \text{ by }{\bf \tilde{r}}_2:={\bf r}_2|_{\Lambda({\mathcal T}(\Lambda,\eps),k)}
$$
\item \label{ss4}
Approximate
$$
{\bf r}_3=\left[\begin{array}{@{}c@{}} {\bf r}_{31} \\ {\bf r}_{32}\\ {\bf r}_{33}\end{array} \right]:=
\left[\begin{array}{@{}c@{}}
\langle  \grad \Psi^{(H^1_0)^n},  \grad \vec{\tilde{u}}-\underline{\tilde{\theta}} \rangle_{L_2(\Omega)^{n^2}}\\
0_{\vee_{L_2/\R}}\\
\langle  \Psi^{L_2^{n^2}}, \underline{\tilde{\theta}} - \grad \vec{\tilde{u}}\rangle_{L_2(\Omega)^{n^2}}
\end{array}\right] \text{ by } {\bf \tilde{r}}_3:={\bf r}_3|_{\Lambda({\mathcal T}(\Lambda,\eps),k)}.
$$
\end{enumerate}

The same arguments (actually a subset) that led to Theorem~\ref{thm2} show the following theorem.
\begin{theorem} 
For an admissible  $\Lambda \subset \vee$, $[{\bf \tilde{u}}^\top\!,{\bf \tilde{p}}^\top\!,\bm{\tilde{\theta}}^\top]^\top \in \ell_2(\Lambda)$ with 
$(\vec{\tilde{u}},\tilde{p},\underline{\tilde{\theta}})$ sufficiently close to $(\vec{u},p,\underline{\theta})$, and an $\eps>0$, consider the steps \eqref{ss1}-\eqref{ss4}.
With $s>0$ such that $[{\bf \tilde{u}}^\top\!,{\bf \tilde{p}}^\top\!,\bm{\tilde{\theta}}^\top]^\top  \in \cA^s$, 
it holds that
\begin{align*}
\|D{\bf Q}([{\bf \tilde{u}}^\top\!,{\bf \tilde{p}}^\top\!,&\bm{\tilde{\theta}}^\top]^\top)-({\bf \tilde{r}}_1+{\bf \tilde{r}}_2+{\bf \tilde{r}}_3)\|\\
& \lesssim 
2^{-k/2} (\|\vec{u}-\vec{\tilde{u}}\|_{H^1_0(\Omega)^n}+\|p-\tilde{p}\|_{L_2(\Omega)} +\|\underline{\theta}-\underline{\tilde{\theta}}\|_{L_2(\Omega)^{n^2}})+\eps,
\end{align*}
where the computation of ${\bf \tilde{r}}_1+{\bf \tilde{r}}_2+{\bf \tilde{r}}_3$ requires ${\mathcal O}(\# \Lambda +\eps^{-1/s})$ operations.
So by taking $k$ sufficiently large, Condition~\ref{nonlineareval*} is satisfied.
\end{theorem}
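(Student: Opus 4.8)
The plan is to follow the proof of Theorem~\ref{thm2} essentially verbatim, exploiting that the present situation is a strict sub-case: since the boundary conditions are homogeneous, no $\Gamma_D$- or $\Gamma_N$-terms (and hence no half-step $\VV_2$-contribution) occur. By construction the error splits, mirroring \eqref{290} and the steps \eqref{ss1}--\eqref{ss4}, as
\begin{align*}
D{\bf Q}([{\bf \tilde{u}}^\top\!,{\bf \tilde{p}}^\top\!,\bm{\tilde{\theta}}^\top]^\top)-({\bf \tilde{r}}_1+{\bf \tilde{r}}_2+{\bf \tilde{r}}_3)
&= M\,({\bf r}^{(\frac{1}{2})}_1-{\bf \tilde{r}}^{(\frac{1}{2})}_1)\\
&\quad +({\bf r}_1-{\bf \tilde{r}}_1)+({\bf r}_2-{\bf \tilde{r}}_2)+({\bf r}_3-{\bf \tilde{r}}_3),
\end{align*}
where $M$ is the coefficient column in \eqref{290} multiplying the momentum inner product. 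The first task is to show $M\in\cL(\ell_2(\vee_{(\hat H^1_0)^n}),\ell_2(\vee))$ with a norm bounded uniformly for $\vec{\tilde u}$ in a neighbourhood of $\vec u$. Via the Riesz basis property of all bases involved, this reduces to the boundedness of the convective map $\vec w\mapsto\nu^{-3/2}\big((\vec{\tilde u}\cdot\grad)\vec w+(\vec w\cdot\grad)\vec{\tilde u}\big)\in\cL(H^1_0(\Omega)^n,H^{-1}(\Omega)^n)$ -- exactly the uniformly bounded convective part of $DG(\vec{\tilde u},\tilde p)$, using the Sobolev embeddings available for $n\le 4$ -- together with $q\mapsto-\divv q\in\cL(L_2(\Omega)/\R,H^{-1}(\Omega)^n)$ and, after an integration by parts, $\phi\mapsto-\langle\divv\Psi^{L_2^{n^2}},\phi\rangle=\langle\Psi^{L_2^{n^2}},\grad\phi\rangle$, which is $\ell_2$-bounded because $\Psi^{L_2^{n^2}}$ is an $L_2$-Riesz basis.

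Next I would bound the four vector norms one by one. For $\|{\bf r}^{(\frac{1}{2})}_1-{\bf \tilde{r}}^{(\frac{1}{2})}_1\|$ I replace $\vec f$ by the piecewise polynomial $\vec f_\eps$ from \eqref{ss1} (adding $\eps$ by the Riesz basis property), observe that
$$
\nu^{-3/2}(\vec{\tilde u}\cdot\grad)\vec{\tilde u}-\vec f_\eps-\divv\underline{\tilde\theta}+\grad\tilde p\in\cP_m({\mathcal T}(\Lambda,\eps))^n,
$$
and apply the $H^{-1}$-decay estimate of Proposition~\ref{prop1} to the tail outside $\Lambda^{(\hat H^1_0)^n}({\mathcal T}(\Lambda,\eps),k)$, gaining a factor $2^{-k}$ times the momentum least-squares norm. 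For $\|{\bf r}_1-{\bf \tilde{r}}_1\|$ I use that the reconstructed $\tilde r_1^{(\frac12)}$ is piecewise polynomial and invoke Propositions~\ref{prop2}--\ref{prop3} on its three components, splitting the linearised convective coefficient into a zeroth- and a first-order part exactly as in the elliptic proof, to obtain $2^{-k/2}\|\tilde r_1^{(\frac12)}\|_{H^1_0(\Omega)^n}\eqsim 2^{-k/2}\|{\bf \tilde{r}}_1^{(\frac12)}\|$, then bound $\|{\bf \tilde{r}}_1^{(\frac12)}\|\le\|{\bf r}_1^{(\frac12)}-{\bf \tilde{r}}_1^{(\frac12)}\|+\|{\bf r}_1^{(\frac12)}\|$. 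Finally $\|{\bf r}_2-{\bf \tilde{r}}_2\|$ and $\|{\bf r}_3-{\bf \tilde{r}}_3\|$ follow from Proposition~\ref{prop2} applied to the piecewise polynomials $\divv\vec{\tilde u}-g_\eps$ (with an extra $\eps$ from \eqref{ss1}) and $\grad\vec{\tilde u}-\underline{\tilde\theta}$, each contributing $2^{-k/2}$ times the relevant $L_2$-norm.

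Collecting these bounds and recognising the momentum $H^{-1}$-norm, $\|\divv\vec{\tilde u}-g\|_{L_2(\Omega)}$ and $\|\grad\vec{\tilde u}-\underline{\tilde\theta}\|_{L_2(\Omega)^{n^2}}$ as precisely the three ingredients of the least-squares functional \eqref{def_Q2}, Lemma~\ref{lem2} converts them into $\|\vec u-\vec{\tilde u}\|_{H^1_0(\Omega)^n}+\|p-\tilde p\|_{L_2(\Omega)}+\|\underline\theta-\underline{\tilde\theta}\|_{L_2(\Omega)^{n^2}}$, yielding the asserted estimate. The operation count is identical to the elliptic case: each quantity is assembled by passing to a single-scale piecewise-polynomial representation through the multi-to-single-scale transform of Proposition~\ref{prop4}, integrating tile by tile against the appropriate single-scale basis, and transforming back, all in $\mathcal O(\#{\mathcal T}(\Lambda,\eps))=\mathcal O(\#\Lambda+\eps^{-1/s})$ operations. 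The main obstacle, and the only genuinely new point relative to Theorem~\ref{thm2}, is the quadratic convection term: one must verify that $\vec{\tilde u}\in C(\bar\Omega)^n$ makes $\grad\vec{\tilde u}$ and hence $(\vec{\tilde u}\cdot\grad)\vec{\tilde u}$ piecewise polynomial on the common refinement, that conditions \eqref{88} make $\grad\tilde p$ and $\divv\underline{\tilde\theta}$ piecewise polynomial, and that the linearised convection is uniformly bounded from $H^1_0(\Omega)^n$ into $H^{-1}(\Omega)^n$ on a neighbourhood of the solution -- all of which hold in the assumed range $n\in\{2,3,4\}$.
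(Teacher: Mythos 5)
Your proposal is correct and follows exactly the route the paper intends: the paper's own proof consists of the single remark that ``the same arguments (actually a subset) that led to Theorem~\ref{thm2}'' apply, and your write-up is precisely that adaptation --- dropping the $\Gamma_D$/$\Gamma_N$ terms, bounding the coefficient operators via Riesz-basis properties and the uniform boundedness of the linearized convection, invoking Propositions~\ref{prop1}--\ref{prop3} for the tail estimates, and closing with Lemma~\ref{lem2} applied to the functional \eqref{def_Q2} plus the standard multi-to-single-scale cost count. No gaps; the handling of the quadratic convection term via assumption \eqref{17}-type structure and $n\le 4$ Sobolev embeddings is exactly what the paper's reference to Theorem~\ref{thm2} presupposes.
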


We conclude that the {\bf awgm} is an optimal solver for the stationary Navier-Stokes equations in the form $D{\bf Q}([{\bf {u}}^\top\!,{\bf {p}}^\top\!,\bm{{\theta}}^\top]^\top)=0$ resulting from the velocity--pressure--velocity gradient formulation.
Obviously, we cannot claim or even expect that this holds true uniformly in a vanishing viscosity parameter $\nu$. This because in the limit already well-posedness of $DG(\vec{u},p)$ cannot be expected.

\subsection{Velocity--pressure--vorticity formulation} \label{SVPV}
Restricting to $n \in \{2,3\}$, we set
\framebox{$\PP:=L_2(\Omega)^{2n-3}$}, and define
$$
G_1 \in \cL(\PP,\UU'),\qquad G_2 \in \cL(\UU,\PP),
$$
by
$$
G_2 (\vec{u},p)= \curl \vec{u},\qquad (G_1 \vec{\omega} )(\vec{v},q)=\int_\Omega \vec{\omega} \cdot \curl \vec{v} \,d x
$$
where for $n=2$, $\curl$ should be read as the scalar-valued operator $\vec{v} \mapsto \partial_x v_2-\partial_y v_1$.
 (and so $\vec{\omega}\cdot \curl \vec{u}$ as $\omega \curl \vec{u}$).
 The (formal) adjoint $\curl'$ equals $\curl$ for $n=3$, whereas for $n=2$ it is $v \mapsto[\partial_y v,-\partial_x v]^\top$.

 Since a vector field in the current space $\PP$ has $2n-2$ components, instead of $n^2$ as in the previous subsection, the first order system formulation studied in this subsection is more attractive. As we will see, later in its derivation it will be needed that $g=0$, i.e., $\divv \vec{u}=0$.

Using that on $H_0^1(\Omega)^n \times H_0^1(\Omega)^n $, $\int_\Omega \grad \vec{u} :\grad\vec{v}-\divv \vec{u} \divv \vec{v}-\curl \vec{u} \cdot \curl \vec{v} \,dx=0$,
the results from Sect.~\ref{S1} show that the solution $(\vec{u},p)$ can be found as the first component of the solution in $\UU \times \PP$ of the system
\begin{align*}
\vec{H}_1(\vec{u},p,\vec{\omega}):=
\Big(
(\vec{v},q) \mapsto \int_\Omega \vec{\omega} \cdot \curl \vec{v}+ \divv \vec{u} &\divv \vec{v}-p \divv \vec{v} +\nu^{-3/2}(\vec{u}\cdot \nabla) \vec{u} \cdot \vec{v}\\
&+q (\divv \vec{u}-g) -\vec{f}\cdot \vec{v}
\,dx,\vec{\omega}-\curl \vec{u}\Big)=\vec{0}
\end{align*}
on $\UU' \times \PP$, being a minimizer of 
\[
\begin{split}
Q_1(\vec{u},p,\vec{\omega}):={\textstyle \frac{1}{2}}\Big(&\big\|
\vec{v} \mapsto 
\int_\Omega 
\vec{\omega} \cdot \curl \vec{v}+ \divv \vec{u} \divv \vec{v}-p \divv \vec{v} +{\textstyle \frac{(\vec{u}\cdot \nabla) \vec{u} \cdot \vec{v}}{\nu^{3/2}}} -\vec{f}\cdot \vec{v}
\,dx
\|_{H^{-1}(\Omega)^n}^2\\
&+\|\divv \vec{u}-g\|_{L_2(\Omega)}^2+\|\vec{\omega}- \curl \vec{u}\|_{L_2(\Omega)^{2n-3}}^2\Big).
\end{split}
\]
The function $\vec{\omega}=\curl \vec{u}$ is known as the vorticity.

Since $G$ satisfies \eqref{r1}--\eqref{r3}, $\vec{H}_1$ satisfies \eqref{r4}--\eqref{r6}, and so by Lemma~\ref{lem2},
\begin{equation} \label{similar}
Q_1(\vec{\tilde{u}},\tilde{p},\vec{\tilde{\omega}}) \eqsim \|(\vec{\tilde{u}},\tilde{p},\vec{\tilde{\omega}})-(\vec{u},{p},\vec{\omega})\|^2_{\UU \times \PP}
\end{equation}
 for $(\vec{\tilde{u}},\tilde{p},\vec{\tilde{\omega}})$ in a neighborhood of $(\vec{u},{p},\vec{{\omega}})$.

From here on, we assume that
$$
g=0,
$$
so that the velocities component of the exact solution is divergence-free.
This will allow us to get rid of  the second order term $\grad \divv \vec{u}$ in the definition of $\vec{H}_1$. We define
$$
\vec{H}_2(\vec{u},p,\vec{\omega}):=
\Big(
(\vec{v},q) \mapsto \int_\Omega \vec{\omega} \cdot \curl \vec{v} -p \divv \vec{v} +{\textstyle \frac{(\vec{u}\cdot \nabla) \vec{u} \cdot \vec{v}}{\nu^{3/2}}}
+q \divv \vec{u} -\vec{f}\cdot \vec{v}
\,dx,\vec{\omega}-\curl \vec{u}\Big)
$$
with corresponding quadratic functional
\[
\begin{split}
Q_2(\vec{u},p,\vec{\omega}):={\textstyle \frac{1}{2}}\Big(&\big\|
\vec{v} \mapsto 
\int_\Omega 
\vec{\omega} \cdot \curl \vec{v}-p \divv \vec{v} +{\textstyle \frac{(\vec{u}\cdot \nabla) \vec{u} \cdot \vec{v}}{\nu^{3/2}}} -\vec{f}\cdot \vec{v}
\,dx
\|_{H^{-1}(\Omega)^n}^2\\
&+\|\divv \vec{u}\|_{L_2(\Omega)}^2+\|\vec{\omega}- \curl \vec{u}\|_{L_2(\Omega)^{2n-3}}^2\Big).
\end{split}
\]

Clearly the solution of $\vec{H}_1(\vec{u},p,\vec{\omega})=0$ is a solution of $\vec{H}_2(\vec{u},p,\vec{\omega})=0$ (\eqref{r4}), and $\vec{H}_2$ is two times continuously differentiable (\eqref{r5}).
From $\|\vec{v} \mapsto 
\int_\Omega  \divv \vec{u} \divv \vec{v}\,dx
\|_{H^{-1}(\Omega)^n}$ $ \lesssim \|\divv \vec{u}\|_{L_2(\Omega)}$, one infers that $Q_1\lesssim Q_2$ by the triangle inequality, and analogously $Q_2\lesssim Q_1$.
Thanks to \eqref{similar}, an application of Lemma~\ref{lem2} shows that $\vec{H}_2$ satisfies also \eqref{r6}. We conclude that $\vec{H}_2(\vec{u},p,\vec{\omega})$ is a well-posed first order system formulation of $G(\vec{u},p)=0$, and consequently, that $(\vec{u},p,\vec{\omega})$ can be found by solving the normal equations 
$DQ_2(\vec{u},p,\vec{\omega})=0$, which are well-posed in the sense that they satisfy \eqref{38}--\eqref{41}.
This gives us an alternative, effortless proof of \cite[Thm.~2.1]{35.93006}.

As usual,  to deal with the `unpractical'  norm on $H^{-1}(\Omega)^n$,  we equip $H^1_0(\Omega)^n$ 
 with a wavelet Riesz basis
$$
\Psi^{(\hat{H}^1_0)^n}=\{\psi_\lambda^{(\hat{H}^1_0)^n}\colon\lambda \in \vee_{(\hat{H}^1_0)^n}\},
$$
and replace, in the definition of $Q_2$,  the norm on its dual by the equivalent norm $\|\vec{g}(\Psi^{(H^1_0)^n})\|$  for $\vec{g} \in H^{-1}(\Omega)^n$.

Next, after equipping $* \in \{H^1_0(\Omega)^n, L_2(\Omega)/\R, L_2(\Omega)^{2n-3}\}$ with Riesz basis 
$\Psi^{*}=\{\psi_\lambda^{*} \colon \lambda\in\vee_{*}\}$, and so $H^1_0(\Omega)^n\times L_2(\Omega)/\R\times L_2(\Omega)^{2n-3}$ with 
$$
\Psi:=(\Psi^{(H^1_0)^n},0_{L_2/\R},0_{L_2^{2n-3}}) \cup (0_{(H^1_0)^n},\Psi^{L_2/\R},0_{L_2^{2n-3}})\cup(0_{(H^1_0)^n},0_{L_2/\R},\Psi^{L_2^{2n-3}})
$$
with index set $\vee:=\vee_{(H^1_0)^n}\cup\vee_{L_2/\R}\cup \vee_{L_2^{2n-3}}$, we apply the {\bf awgm} to the resulting system
\[
\begin{split}
& D{\bf Q}_2([{\bf {u}}^\top\!,{\bf {p}}^\top\!,\bm{\omega}^\top]^\top)=\left[\begin{array}{@{}c@{}} 
\langle \divv  \Psi^{(H^1_0)^n}, \divv \vec{{u}}  \rangle_{L_2(\Omega)}\\
0_{\vee_{L_2/\R}}\\
0_{\vee_{L_2^{2n-3}}}
\end{array} \right] 
+
\left[\begin{array}{@{}c@{}}
\langle  \curl \Psi^{(H^1_0)^n},  \curl \vec{{u}}-\vec{\omega}\rangle_{L_2(\Omega)^{2n-3}}\\
0_{\vee_{L_2/\R}}\\
\langle  \Psi^{L_2^{2n-3}}, \vec{{\omega}} - \curl \vec{{u}}\rangle_{L_2(\Omega)^{2n-3}}
\end{array} \right] 
\\
&+ \left[\begin{array}{@{}c@{}}  
\langle \frac{(\vec{{u}}\cdot\grad) \Psi^{(H^1_0)^n}+(\Psi^{(H^1_0)^n}\cdot\grad) \vec{{u}}}{\nu^{3/2}},\Psi^{(\hat{H}^1_0)^n}\rangle_{L_2(\Omega)^n}\\
-\langle \Psi^{L_2/\R},\divv \Psi^{(\hat{H}^1_0)^n} \rangle_{L_2(\Omega)} \\
\langle  \Psi^{L_2^{2n-3}}, \curl \Psi^{(\hat{H}^1_0)^n} \rangle_{L_2(\Omega)^{2n-3}} 
\end{array} \right] 
\Big\{\langle \Psi^{(\hat{H}^1_0)^n},{\textstyle \frac{(\vec{{u}}\cdot\grad) \vec{{u}}}{\nu^{3/2}}}-\vec{f}\rangle_{L_2(\Omega)^n}
+\\& \hspace*{14em}
\langle \curl \Psi^{(\hat{H}^1_0)^n}, \vec{\omega}\rangle_{L_2(\Omega)^{2n-3}}-
\langle \divv  \Psi^{(\hat{H}^1_0)^n}, p\rangle_{L_2(\Omega)}
\Big\}
=0.
\end{split}
\]

To express the three terms in $\vec{v} \mapsto \langle \vec{v},\nu^{-3/2}(\vec{{u}}\cdot\grad) \vec{{u}}-\vec{f}\rangle_{L_2(\Omega)^n}
+
\langle \curl \vec{v}, \vec{\omega}\rangle_{L_2(\Omega)^{2n-3}}-
\langle \divv  \vec{v}, p\rangle_{L_2(\Omega)}
$ w.r.t. one dictionary, we impose the easily realizable conditions that
$$
\Psi^{L_2/\R} \subset H^1(\Omega),\, \Psi^{L_2^{2n-3}} \subset H(\curl;\Omega)
$$
Then for finitely supported approximations $[{\bf \tilde{u}}^\top\!,{\bf \tilde{p}}^\top\!,\bm{\tilde{\omega}}^\top]^\top$ to $[{\bf u}^\top\!,{\bf p}^\top\!,\bm{\omega}^\top]^\top$, for $(\vec{\tilde{u}},\tilde{p},\vec{\tilde{\omega}}):=[{\bf \tilde{u}}^\top\!,{\bf \tilde{p}}^\top\!,\bm{\tilde{\omega}}^\top] \Psi \in H^1_0(\Omega)^n \times H^1(\Omega) \times H(\curl';\Omega)$, we have
\[
\framebox{$\begin{split}
& \!\!D{\bf Q}_2([{\bf \tilde{u}}^\top\!,{\bf \tilde{p}}^\top\!,\bm{\tilde{\omega}}^\top]^\top)\!=\!
\left[\begin{array}{@{}c@{}} 
\langle \divv  \Psi^{(H^1_0)^n}\!,\! \divv \vec{\tilde{u}}  \rangle_{L_2(\Omega)}\!\\
0_{\vee_{L_2/\R}}\\
0_{\vee_{L_2^{2n-3}}}
\end{array} \!\right] 
\!+\!
\left[\!\begin{array}{@{}c@{}}
\langle  \curl \Psi^{(H^1_0)^n},  \curl \vec{\tilde{u}}-\vec{\tilde{\omega}} \rangle_{L_2(\Omega)^{2n-3}}\\
0_{\vee_{L_2/\R}}\\
\langle  \Psi^{L_2^{2n-3}}, \vec{\tilde{\omega}} - \grad \vec{\tilde{u}}\rangle_{L_2(\Omega)^{2n-3}}
\end{array}\! \right] \!+\!\!\\
&\!\!\!\!\left[\begin{array}{@{}c@{}}  
\!\langle \frac{(\vec{\tilde{u}}\cdot\grad) \Psi^{(H^1_0)^n}+(\Psi^{(H^1_0)^n}\cdot\grad) \vec{\tilde{u}}}{\nu^{3/2}},\Psi^{(\hat{H}^1_0)^n}\!\rangle_{L_2(\Omega)^n}\\
-\langle \Psi^{L_2/\R},\divv \Psi^{(\hat{H}^1_0)^n} \rangle_{L_2(\Omega)} \\
\langle  \Psi^{L_2^{2n-3}}, \curl \Psi^{(\hat{H}^1_0)^n} \rangle_{L_2(\Omega)^{2n-3}} 
\end{array} \!\!\right] 
\!\!\langle \Psi^{(\hat{H}^1_0)^n}\!\!,\!{\textstyle \frac{(\vec{\tilde{u}}\cdot\grad) \vec{\tilde{u}}}{\nu^{3/2}}}\!-\!\vec{f}\!+\!\curl'\vec{\tilde{\omega}}\!+\!\grad \tilde{p}
\rangle_{L_2(\Omega)^n}.\!
\end{split}$}\hspace*{-1em}
\]

The design of an approximate residual evaluation follows analogous steps as in the previous subsection.
Equipping Cartesian products with bases of canonical form, and assuming that the scalar-valued bases $\Psi^\ast$ for $\ast \in \{\hat{H}^1_0, H^1_0,L_2/\R,L_2\}$ satisfy \eqref{w1}--\eqref{w8}, and that $[{\bf \tilde{u}}^\top\!,{\bf \tilde{p}}^\top\!,\bm{\tilde{\omega}}^\top]^\top$ is supported on an admissible set,
four steps fully analogous to \eqref{ss1}--\eqref{ss4} in the previous subsection define an approximation scheme that satisfies Condition~\ref{nonlineareval*}.
We conclude that the {\bf awgm} is an optimal solver for the stationary Navier-Stokes equations in the form $D{\bf Q}([{\bf {u}}^\top\!,{\bf {p}}^\top\!,\bm{{\theta}}^\top]^\top)=0$ resulting from the velocity--pressure--vorticity formulation.
Again, also here we cannot claim or even expect that this holds true uniformly in a vanishing viscosity parameter $\nu$.

\section{Conclusion}
We have seen that a well-posed (system of) 2nd order PDE(s) can always be formulated as a well-posed 1st order least squares system.
The arising dual norm(s) can be replaced by the equivalent $\ell_2$-norm(s) of the wavelet coefficients of the functional.
The resulting Euler-Lagrange equations, also known as the (nonlinear) normal equations, can be solved at the best possible rate by the adaptive wavelet Galerkin method.
We developed a new approximate residual evaluation scheme that also for semi-linear problems
satisfies the condition for optimal computational complexity, and that is quantitatively much more efficient than the usual {\bf apply} scheme.
Moreover, regardless of the order of the wavelets, it applies already to wavelet bases that have only one vanishing moment.
As applications we discussed optimal solvers for
first order least squares reformulations of 2nd order elliptic PDEs with inhomogeneous boundary conditions, and that of the stationary Navier-Stokes equations.
In a forthcoming work, we will apply this approach to time-evolution problems.

\%bibliography{../ref}

\appendix
\section{Decay estimates} \label{Sdecay}
We collect a number of decay estimates that have been used in the proof of Theorem~\ref{thm2}.
Recall the definition of the spaces $\UU$ and $\VV$ given at the beginning of Sect.~\ref{Sreformulation}.

The following proposition and subsequent lemma have been used to bound $\|{\bf r}^{(\frac{1}{2})}_1-{\bf \tilde{r}}^{(\frac{1}{2})}_1\|$.
The presence of the boundary integral and the fact that the upper bound that is given depends on the norm of $g$ as a whole, and not on norms of $g_1$ and $g_2$ requires a non-standard treatment.

\begin{proposition} \label{prop1}
For a tiling ${\mathcal T} \subset \cO_\Omega$, let $g \in \VV_1'$
be of the form
$$
g(v)=\int_{\Omega} g_1 v \,dx+\int_{\Gamma_{N}} \vec{g}_2 \cdot {\bf n}v \,ds,
$$
where $g_1 \in \cP_m(\tria)$, $\vec{g}_2 \in \cP_m(\tria)^n$. Then
$$
\big\| g(\Psi^{\VV_1}) \big|_{\vee_{\VV_1} \setminus \Lambda^{\VV_1}(\tria,k)}\big\|\lesssim 2^{-k} \|g\|_{\VV_1'}
$$
{\rm(}uniform in ${\mathcal T}$ and $g${\rm)}.
\end{proposition}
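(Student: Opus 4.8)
The plan is to reduce the claimed $\ell_2$-bound to a sum of \emph{per-wavelet} decay estimates, driven by the single vanishing moment of the $\psi^{\VV_1}_\lambda$, and then to reassemble these local estimates into the global dual norm $\|g\|_{\VV_1'}$. First I would unwind what it means for $\lambda$ to lie outside $\Lambda^{\VV_1}(\tria,k)$. By Definition~\ref{partition-to-lambda}, and since $t(\tria)$ contains all ancestors, the level-$(|\lambda|-k)$ tiles of $t(\tria)$ cover exactly the part of $\Omega$ on which $\tria$ is refined to level $\ge|\lambda|-k$; hence $\lambda\notin\Lambda^{\VV_1}(\tria,k)$ forces every $\omega\in\tria$ meeting $\cS(\psi^{\VV_1}_\lambda)$ to have level $\le|\lambda|-k-1$. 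Together with \eqref{w1}--\eqref{w3} (shape regularity, local support, finite overlap) this means that on $\supp\psi^{\VV_1}_\lambda$ the data $g_1$ and $\vec{g}_2$ coincide with polynomials of degree $m$ on the $\mathcal{O}(1)$ coarse tiles meeting the support, each of diameter $\gtrsim 2^{k}2^{-|\lambda|}$.

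The per-wavelet estimate comes next. For an interior index the boundary term is absent and $\int_\Omega\psi^{\VV_1}_\lambda\,dx=0$ by \eqref{w8}; subtracting the local mean $c_\lambda$ of $g_1$ on $\supp\psi^{\VV_1}_\lambda$ gives $|g(\psi^{\VV_1}_\lambda)|=|\langle g_1-c_\lambda,\psi^{\VV_1}_\lambda\rangle_{L_2}|\lesssim 2^{-|\lambda|}\|\grad g_1\|_{L_2(\supp\psi^{\VV_1}_\lambda)}\,\|\psi^{\VV_1}_\lambda\|_{L_2}$, where $\|\psi^{\VV_1}_\lambda\|_{L_2}\eqsim 2^{-|\lambda|}$ because the $H^1$-normalised wavelets scale this way. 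Squaring, summing over $|\lambda|=L$ with finite overlap, and then summing the resulting geometric series over the scale gaps $L-\ell\ge k$ between the wavelet level $L$ and the local tile level $\ell$ produces the factor $2^{-k}$ required by the statement; the one vanishing moment supplies the single order of decay per level that makes this series converge at the advertised rate. Reorganising the sum by coarse tiles, the accumulated bound is a multiple of $2^{-k}\,\mathrm{osc}(g_1,\tria)$ with $\mathrm{osc}(g_1,\tria)^2=\sum_{\omega\in\tria}\diam(\omega)^2\|g_1-\bar{g}_1^\omega\|_{L_2(\omega)}^2$, and I would invoke a companion localisation lemma that controls this local oscillation by the \emph{local} dual norms of $g$, which sum via a finitely overlapping cover to $\|g\|_{\VV_1'}^2$ by a partition-of-unity argument in $\VV_1$.

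The hard part will be the boundary integral over $\Gamma_N$. Because $\|g_1\|_{L_2}$ and $\|\vec{g}_2\|_{L_2}$ are \emph{not} individually controlled by $\|g\|_{\VV_1'}$ --- only the combined functional is --- I cannot estimate the bulk term and the $\Gamma_N$-term separately, which is precisely the non-standard feature flagged before the statement. For the far indices whose support meets $\Gamma_N$ I would therefore keep the two contributions together and pair $g$ as a whole against a single test function supported near the offending coarse tile, using that elements of $\VV_1$ vanish on $\Gamma_D$ so that extension by zero is admissible, and bound it directly by a local dual norm $\|g\|_{\VV_1'(\omega)}$ built from $g$ rather than from $g_1$ and $\vec{g}_2$. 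A convenient device is the Riesz representer $\phi\in\VV_1$ with $a(\phi,\cdot)=g$ and $\|\phi\|_a\eqsim\|g\|_{\VV_1'}$, so that the coefficients read as $g(\psi^{\VV_1}_\lambda)=a(\phi,\psi^{\VV_1}_\lambda)$; since the piecewise-polynomial data render $\phi$ smooth across the natural ($C^1$) transmission conditions at the coarse interfaces and at $\Gamma_N$, its fine-scale coefficients inherit the same geometric decay, now automatically expressed through $\|g\|_{\VV_1'}$.

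In summary, the skeleton is: (i) identify the local polynomial structure of $g$ on $\supp\psi^{\VV_1}_\lambda$ for $\lambda\notin\Lambda^{\VV_1}(\tria,k)$; (ii) extract one order of decay per level from the single vanishing moment; (iii) sum the geometric series to obtain $2^{-k}$; and (iv) pass from the local contributions to $\|g\|_{\VV_1'}$ by localisation. I expect step (iv) in the presence of the $\Gamma_N$-integral --- establishing the combined local estimate uniformly in $\tria$ and $g$, and handling supports that straddle a coarse interface in the finite-overlap bookkeeping --- to be the main obstacle, and the reason the treatment must differ from the standard bulk-only decay arguments.
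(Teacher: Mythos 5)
Your skeleton (per-wavelet decay, geometric summation over the level gap, localisation to the dual norm) matches the structure of the paper's argument, but both of your key technical devices are problematic, and the second one is a genuine gap. First, your per-wavelet estimate subtracts a local mean and invokes Poincar\'{e}, $|\langle g_1-c_\lambda,\psi^{\VV_1}_\lambda\rangle|\lesssim 2^{-|\lambda|}\|\grad g_1\|_{L_2(\supp\psi^{\VV_1}_\lambda)}\|\psi^{\VV_1}_\lambda\|_{L_2}$. This requires $g_1\in H^1(\supp\psi^{\VV_1}_\lambda)$, which fails precisely for the wavelets whose support straddles an interface between tiles of $\tria$, where $g_1$ jumps; you flag these as a bookkeeping issue, but they defeat the estimate itself, not its bookkeeping. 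The paper's proof avoids this entirely: it never plays the vanishing moment against $g_1$. It only uses that \eqref{w8} together with Friedrichs'/Poincar\'{e}'s inequality (applied to the \emph{wavelet}) gives $\|\psi^{\VV_1}_\lambda\|_{L_2(\Omega)}\lesssim 2^{-|\lambda|}$ (and $\|\psi^{\VV_1}_\lambda\|_{L_2(\Gamma_N)}\lesssim 2^{-|\lambda|/2}$ by a trace estimate), and then plain Cauchy--Schwarz tile by tile, as in \eqref{10}, produces the level-gap factor against the \emph{weighted} local norms $\sum_{\omega\in\tria}4^{-|\omega|}\|g_1\|^2_{L_2(\omega)}$ and $\sum_{\omega\in\tria}2^{-|\omega|}\|\vec{g}_2\cdot{\bf n}\|^2_{L_2(\partial\omega\cap\Gamma_N)}$; no continuity of $g_1$ across tile interfaces is needed, so straddling supports are harmless.

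Second, and more seriously, the localisation step --- passing from those weighted local quantities to $\|g\|_{\VV_1'}$ --- is the actual content of the proposition (the paper isolates it as Lemma~\ref{lem3}), and you do not supply it. You correctly identify the obstruction ($\|g_1\|$ and $\|\vec{g}_2\|$ are not separately controlled by $\|g\|_{\VV_1'}$), but neither of your proposed resolutions works. The workable mechanism, used in Lemma~\ref{lem3}, is to test $g$ against sums of \emph{disjointly supported} local bubbles $v_\omega$ chosen so that one of the two terms of $g$ dies on them: for the bulk bound one takes $V_\omega\subset H^1_0(\omega)$ with a discrete inf-sup property over $\cP_m(\omega)$, so the $\Gamma_N$-integral vanishes; for the boundary bound one takes $V_\omega\subset\{v\in H^1(\omega)\colon v|_{\partial\omega\setminus(\partial\omega\cap\Gamma_N)}=0\}$ with, crucially, $V_\omega\perp_{L_2(\omega)}\cP_m(\omega)$, so the bulk integral vanishes because $g_1$ is piecewise polynomial --- this orthogonality is where the polynomial structure of the data is genuinely used, and it is absent from your sketch. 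Your fallback via the Riesz representer $\phi$ is circular as stated: $g(\psi^{\VV_1}_\lambda)=a(\phi,\psi^{\VV_1}_\lambda)$ is a rewriting, and converting it into decay requires elliptic regularity of $\phi$ at the scale of the coarse tiles, uniformly in $\tria$; interior regularity gives nothing near the tile interfaces, the corners of the polytopes, and the junction of $\Gamma_D$ and $\Gamma_N$, which is exactly where the problematic wavelets accumulate. To close the proof you essentially need Lemma~\ref{lem3} (or an absorption variant: establish the bulk bound first with $H^1_0(\omega)$-bubbles, then for the boundary bound use trace-supported bubbles and absorb the cross term $\int_\omega g_1 v_\omega\,dx$ by Young's inequality into the bulk bound already obtained).
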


\begin{proof} Since by assumption \eqref{w8}, for $\lambda \in \vee_{\VV_1}$ with $|\lambda|>0$ either $\int_\Omega \psi_\lambda^{\VV_1} \,dx =0$ or ${\rm dist}({\rm supp}\,\psi_\lambda^{\VV},\Gamma_D) \lesssim 2^{-|\lambda|}$, an application of Friedrich's or Poincar\'{e}'s inequality shows that
$\|\psi_\lambda^{\VV_1}\|_{L_2(\Omega)} \lesssim 2^{-|\lambda|} |\psi_\lambda^{\VV_1}|_{H^1(\Omega)}\eqsim 2^{-|\lambda|}$.

Since by  \eqref{w1}--\eqref{w2}, each descendant $\omega' \in \cO_\Omega$ of $\omega \in {\mathcal T}$ with $|\omega'|>|\omega|+k$ is intersected by the supports of a uniformly bounded number of $\lambda \in \vee_{\VV_1} \setminus \Lambda^{\VV_1}(\tria,k)$ with $|\lambda|=|\omega'|$, we have
\begin{equation} \label{10}
\begin{split}
\sum_{\lambda \in \vee_{\VV_1} \setminus \Lambda^{H^1_0}(\tria,k)} \Big| \int_\Omega g_1 \psi_\lambda^{\VV_1}  \, dx\Big|^2
&\leq 
\sum_{\lambda \in \vee_{\VV_1} \setminus \Lambda^{H^1_0}(\tria,k)} \sum_{\omega \in {\mathcal T}} 4^{-|\lambda|} \|g_1\|^2_{L_2(\omega \cap \supp \psi^{\VV_1}_\lambda)}\\
&\lesssim 4^{-k} \sum_{\omega \in {\mathcal T}} 4^{-|\omega|} \|g_1\|_{L_2(\omega)}^2,
\end{split}
\end{equation}

A standard homogeneity argument shows that for $\omega \in {\mathcal T}$ and $v \in H^1(\omega)$, $\|v\|_{L_2(\partial \omega)} \lesssim 2^{-|\omega|/2}(|v|_{H^1(\omega)}+2^{|\omega|}\|v\|_{L_2(\omega)})$, so that $\|\psi^{\VV_1}_\lambda\|_{L_2(\Gamma_N)}\lesssim 2^{-|\lambda|/2}$.
Writing $\partial \omega \cap \Gamma_N$ as $\partial \omega_N$, the arguments that led to \eqref{10} show that
\begin{equation} \label{11}
\begin{split}
\sum_{\lambda \in \vee_{\VV_1} \setminus \Lambda^{H^1_0}(\tria,k)} \Big| \int_{\Gamma_N} \vec{g}_2 \cdot {\bf n} \psi_\lambda^{\VV_1}  \, ds\Big|^2 & \lesssim
 \sum_{\lambda \in \vee_{\VV_1} \setminus \Lambda^{L_2}(\tria,k)} \sum_{\omega \in {\mathcal T}} 2^{-|\lambda|} \|\vec{g}_2 \cdot {\bf n}\|^2_{L_2(\omega_N \cap \supp \psi^{\VV_1}_\lambda)}
\\
&
\lesssim 2^{-k} \sum_{\omega \in {\mathcal T}} 2^{-|\omega|} \|\vec{g}_2 \cdot {\bf n}\|_{L_2(\omega_N)}^2.
\end{split}
\end{equation}

By combining \eqref{10}, \eqref{11} with Lemma~\ref{lem3}, the proof is completed unless $\UU=\VV_1=H^1(\Omega)/\R$.
In the latter case, define $\bar{g}_1:=g_1-\meas(\Omega)^{-1} g(\mathbb{1})$ and $\bar{g}(v):=\int_{\Omega} \bar{g}_1 v \,dx+\int_{\Gamma_{N}} \vec{g}_2 \cdot {\bf n}v \,ds$.
From $g(\Psi^{\VV_1}) \big|_{\vee_{\VV_1} \setminus \Lambda^{\VV_1}(\tria,k)}=\bar{g}(\Psi^{\VV_1}) \big|_{\vee_{\VV_1} \setminus \Lambda^{\VV_1}(\tria,k)}$,
and $\|g\|_{\VV_1'}=\|\bar{g}\|_{\VV_1'}$, applications of \eqref{10}, \eqref{11} and that of Lemma~\ref{lem3} to $\bar{g}$ complete the proof in this case.
\end{proof}

\begin{lemma} \label{lem3} In the situation of Proposition~\ref{prop1}, with additionally  $g(\mathbb{1})=0$ when $\UU=\VV_1=H^1(\Omega)/\R$, it holds that 
$$
\sum_{\omega \in {\mathcal T}} 4^{-|\omega|} \|g_1|_{\omega}\|_{L_2(\omega)}^2 +
2^{-|\omega|} \|\vec{g}_2\cdot{\bf n}|_{\omega}\|_{L_2(\partial \omega \cap \Gamma_{N})}^2 
\lesssim \|g\|_{\VV'}^2.
$$
\end{lemma}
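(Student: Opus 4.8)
The plan is to establish the two summands separately by testing $g$ against explicitly constructed, locally supported functions in $\VV_1$, exploiting that both $g_1$ and $\vec{g}_2$ are piecewise polynomials of degree $m$ w.r.t.\ $\tria$. Write $\ell=|\omega|$ and set $S_1:=\sum_{\omega}4^{-\ell}\|g_1\|_{L_2(\omega)}^2$ and $S_2:=\sum_{\omega}2^{-\ell}\|\vec{g}_2\cdot{\bf n}\|_{L_2(\partial\omega\cap\Gamma_N)}^2$; the goal is $S_1+S_2\lesssim\|g\|_{\VV_1'}^2$, where $\|\cdot\|_{\VV_1'}$ is the dual norm appearing on the right-hand side of the statement.

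First I would treat $S_1$. For each $\omega\in\tria$ I choose an interior bubble $w_\omega\in H^1_0(\omega)\subset\VV_1$ with $\int_\omega g_1 w_\omega\,dx\eqsim\|g_1\|_{L_2(\omega)}^2$, $\|w_\omega\|_{L_2(\omega)}\lesssim\|g_1\|_{L_2(\omega)}$ and $|w_\omega|_{H^1(\omega)}\lesssim 2^{\ell}\|g_1\|_{L_2(\omega)}$; existence with constants uniform in $\omega$ follows from the shape regularity in \eqref{w1} by scaling to a reference cell, together with the norm equivalence $\|p\|_{L_2(\omega)}^2\eqsim\int_\omega p^2 b_\omega\,dx$ on $\cP_m(\omega)$ and the inverse inequality on the bubble-weighted polynomials (this is the subspace $V_\omega$ anticipated in the Remark). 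Testing with $v:=\sum_\omega 4^{-\ell}w_\omega$, whose supports are essentially disjoint, the boundary term of $g$ drops out since each $w_\omega$ vanishes on $\partial\omega$, so $g(v)=\sum_\omega 4^{-\ell}\int_\omega g_1 w_\omega\gtrsim S_1$, while $\|v\|_{H^1}^2\eqsim\sum_\omega 4^{-2\ell}(1+4^{\ell})\|g_1\|_{L_2(\omega)}^2\eqsim S_1$. Using $\|v\|_{\VV_1}\le\|v\|_{H^1}$ (and, in the quotient case $\UU=\VV_1=H^1(\Omega)/\R$, that the hypothesis $g(\mathbb{1})=0$ makes the pairing legitimate) gives $S_1\lesssim g(v)\le\|g\|_{\VV_1'}\,S_1^{1/2}$, hence $S_1\lesssim\|g\|_{\VV_1'}^2$.

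Next I would treat $S_2$, which is the genuinely non-standard part, because the available upper bound controls $g$ only as a whole. For each Neumann facet $F=\partial\omega\cap\Gamma_N$ of positive measure I construct a facet bubble $v_F$ supported in $\omega_F$, vanishing on $\partial\omega_F\setminus F$ (so $v_F\in\VV_1$, since $\meas(\Gamma_D\cap\Gamma_N)=0$), with $\int_F\vec{g}_2\cdot{\bf n}\,v_F\,ds\eqsim\|\vec{g}_2\cdot{\bf n}\|_{L_2(F)}^2$, $\|v_F\|_{L_2(\omega_F)}^2\lesssim 2^{-\ell}\|\vec{g}_2\cdot{\bf n}\|_{L_2(F)}^2$ and $\|v_F\|_{H^1(\omega_F)}^2\lesssim 2^{\ell}\|\vec{g}_2\cdot{\bf n}\|_{L_2(F)}^2$; the extra factor $2^{-\ell}$ in the $L_2$-bound reflects that an $O(1)$ trace on an $(n-1)$-facet of size $2^{-\ell(n-1)}$ is lifted across a layer of thickness $2^{-\ell}$, and the $H^1$-bound then follows from the inverse inequality. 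Testing with $v:=\sum_F 2^{-\ell}v_F$ yields $\|v\|_{H^1}^2\eqsim S_2$ and $\sum_F 2^{-\ell}\int_F\vec{g}_2\cdot{\bf n}\,v_F\gtrsim S_2$; the only new issue is the unavoidable interior contribution $\int_\Omega g_1 v=\sum_F 2^{-\ell}\int_{\omega_F}g_1 v_F$, which I would bound by Cauchy--Schwarz, splitting $2^{-3\ell/2}=2^{-\ell}\cdot 2^{-\ell/2}$, as $|\int_\Omega g_1 v|\lesssim S_1^{1/2}S_2^{1/2}$ (using that each tile carries boundedly many facets). Consequently $S_2\lesssim g(v)-\int_\Omega g_1 v\le\|g\|_{\VV_1'}S_2^{1/2}+S_1^{1/2}S_2^{1/2}$, and dividing by $S_2^{1/2}$ together with $S_1^{1/2}\lesssim\|g\|_{\VV_1'}$ from the first step gives $S_2\lesssim\|g\|_{\VV_1'}^2$.

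The main obstacle is exactly this last step: the facet bubble $v_F$ necessarily pairs with $g_1$ in the interior as well, and since the statement must bound $S_1$ and $S_2$ by the single quantity $\|g\|_{\VV_1'}$ rather than by separate norms of $g_1$ and $\vec{g}_2$, one cannot simply discard that interior term. The resolution is to absorb it through the already-established bound $S_1\lesssim\|g\|_{\VV_1'}^2$, which is why the two parts must be proved in this order. Routine points I would note only in passing are the uniformity of all bubble constants under scaling (from shape regularity), the essentially disjoint supports that turn the global $H^1$-norm into a sum of local ones, and the harmless coarsest level $\ell=0$ where all weights are $\eqsim 1$.
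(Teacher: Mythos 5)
Your proof is correct, and Part 1 (the interior bubbles for $g_1$, with the boundary term dropping because the bubbles lie in $H^1_0(\omega)$) coincides with the paper's argument. For the facet term, however, you take a genuinely different route. The paper designs its facet-bubble spaces with the extra property $V_\omega \perp_{L_2(\omega)} \cP_m(\omega)$, so that for the test function $v=\sum_\omega v_\omega$ the interior pairing $\int_\Omega g_1 v\,dx$ vanishes \emph{identically} (because $g_1|_\omega \in \cP_m(\omega)$), and the bound $S_2\lesssim \|g\|_{\VV'}^2$ then follows from the pure dual-norm argument, completely decoupled from the bound on $S_1$. You instead use ordinary facet bubbles (the standard ones from residual a posteriori error analysis), accept the nonzero interior contamination, and estimate it by Cauchy--Schwarz with the layer scaling $\|v_F\|_{L_2(\omega_F)}\lesssim 2^{-\ell/2}\|\vec{g}_2\cdot{\bf n}\|_{L_2(F)}$, obtaining $\bigl|\int_\Omega g_1 v\,dx\bigr|\lesssim S_1^{1/2}S_2^{1/2}$, which you then absorb using the already-established $S_1\lesssim\|g\|_{\VV'}^2$. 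What each approach buys: the paper's orthogonality trick keeps the two estimates independent and each a one-line duality argument, at the price of having to construct (or assert the existence of) the more special orthogonal bubble spaces; your version needs only off-the-shelf bubbles and elementary absorption, at the price of an ordering constraint between the two parts --- which you correctly identify as the crux and handle properly. Your treatment of the quotient case via $g(\mathbb{1})=0$ and $\|v\|_{\VV_1}\le\|v\|_{H^1(\Omega)}$ is also legitimate (and marginally slicker than the paper's explicit mean subtraction), though you should note it applies in Part 2 as well, not only in Part 1.
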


\begin{proof}
Thanks to the uniform shape regularity condition, for any $\omega \in \cO_\Omega$, there exists a $V_\omega \subset H^1_0(\omega)$ such that
\begin{align*}
\|v\|_{H^1(\omega)} &\lesssim 2^{|\omega|} \|v\|_{L_2(\omega)} \quad (v \in V_\omega),\\
\|p\|_{L_2(\omega)} & \lesssim \sup_{0 \neq v \in V_\omega} \frac{\int_\omega p v\,dx}{\|v\|_{L_2(\omega)}} \quad (p \in \cP_m(\omega)).
\end{align*}
For each $\omega \in {\mathcal T}$, select $v_\omega \in V_\omega$ with $\|g_1|_{\omega}\|_{L_2(\omega)} 
\|v_\omega\|_{L_2(\omega)} \lesssim \int_\omega g_1|_{\omega} v_\omega\,dx$, and $\|v_\omega\|_{L_2(\omega)} = 4^{-|\omega|} \|g_1|_{\omega}\|_{L_2(\omega)}$. Then, with $v=\sum_{\omega \in {\mathcal T}} v_\omega \in H^1_0(\Omega)$, we have $
\sum_{\omega \in {\mathcal T}} 4^{-|\omega|} \|g_1|_{\omega}\|_{L_2(\omega)}^2 \lesssim \int_\Omega g_1 v \,dx$. By combining this with 
$$
\|v\|_{H^1(\Omega)}^2=\sum_{\omega \in {\mathcal T}} \|v_\omega\|_{H^1(\omega)}^2 \lesssim \sum_{\omega \in {\mathcal T}} 4^{|\omega|} \|v_\omega\|_{L_2(\omega)}^2=
\sum_{\omega \in {\mathcal T}}  4^{-|\omega|} \|g_1|_{\omega}\|_{L_2(\omega)}^2,
$$
and $g(v)=\int_{\Omega} g_1 v \,dx$,
we arrive at $\sqrt{\sum_{\omega \in {\mathcal T}}  4^{-|\omega|} \|g_1|_{\omega}\|_{L_2(\omega)}^2} \lesssim \frac{g(v)}{\|v\|_{H^1(\Omega)}} \leq \|g\|_{\VV'}$,
with the last inequality being valid when $H^1_0(\Omega) \subset \VV_1$.

Otherwise, when $\VV_1=H^1(\Omega)/\R$, we take $\bar{v}=v-\frac{\int_\Omega v \,dx}{\meas(\Omega)} \mathbb{1} \in \VV$.  Then $\|\bar{v}\|_{H^1(\Omega)} \lesssim \|v\|_{H^1(\Omega)}$, $g(v)=g(\bar{v})$ by assumption, and so $\frac{g(v)}{\|v\|_{H^1(\Omega)}}\lesssim \frac{g(\bar{v})}{\|\bar{v}\|_{H^1(\Omega)}}\leq \|g\|_{\VV'}$.


For bounding the second term in the statement of the lemma, for a tile $\omega \in \cO_\Omega$ we write $\partial \omega \cap \Gamma_N$ as $\partial \omega_N$.
Thanks to the uniform shape regularity condition, for any $\omega \in \cO_\Omega$ with $\meas(\partial\omega_N)>0$, there 
exists a $V_\omega \subset \{v \in H^1(\omega)\colon v|_{\partial\omega \setminus \partial\omega_N}=0\}$ such that
\begin{align*}
&\|v\|_{H^1(\omega)} \lesssim 2^{|\omega|/2} \|v\|_{L_2(\partial\omega_N)} \quad (v \in V_\omega),\\
&\|\vec{p}\cdot {\bf n}\|_{L_2(\partial\omega_N)}  \lesssim \sup_{0 \neq v \in V_\omega} \frac{\int_{\partial\omega_N} \vec{p}\cdot{\bf n} v\,ds}{\|v\|_{L_2(\partial\omega_N)}} \quad (\vec{p} \in \cP_m(\omega)^n),\\
&V_\omega  \perp_{L_2(\omega)} \cP_m(\omega).
\end{align*}

For each $\omega \in {\mathcal T}$ with $\meas(\partial \omega_{N})>0$, select $v_\omega \in V_\omega$ with $\|\vec{g}_2|_{\omega}\cdot {\bf n}\|_{L_2(\partial \omega_N)} 
\|v_\omega\|_{L_2(\partial \omega_N)} \lesssim \int_{\partial \omega_N} \vec{g}_2|_{\omega} \cdot {\bf n} v_\omega\,ds$, and $\|v_\omega\|_{L_2(\partial \omega_N)} = 2^{-|\omega|} \|\vec{g}_2 \cdot {\bf n}|_{\omega}\|_{L_2(\partial \omega_N)}$. For the other $\omega \in {\mathcal T}$, set $v_\omega=0$.
Then, for the function $v=\sum_{\omega \in {\mathcal T} } v_\omega \in \{w \in H^1(\Omega)\colon w|_{\Gamma_D}=0\}$, we have
$$
\sum_{\omega \in {\mathcal T} } 2^{-|\omega|} \|\vec{g}_2 \cdot {\bf n}|_{\omega}\|_{L_2(\partial \omega_N)}^2 \lesssim   \int_{\Gamma_N} \vec{g}_2\cdot {\bf n} v \, ds.
$$
By combining this with 
$$
\|v\|_{H^1(\Omega)}^2=\sum_{\omega \in {\mathcal T}} \|v_\omega\|_{H^1(\omega)}^2 \lesssim \sum_{\omega \in {\mathcal T}} 2^{|\omega|} \|v_\omega\|_{L_2(\partial \omega_N)}^2=\sum_{\omega \in {\mathcal T}}  2^{-|\omega|} \|\vec{g}_2 \cdot {\bf n}|_{\omega}\|_{L_2(\partial \omega_N)}^2,
$$
and $g(v)=\int_{\Gamma_N} \vec{g}_2 \cdot {\bf n} v \,dx$,
we arrive at 
$$
\sqrt{\sum_{\omega \in {\mathcal T}} 2^{-|\omega|} \|g_2|_{\omega}\cdot {\bf n}\|_{L_2(\partial \omega_N)}^2}
 \lesssim \frac{g(v)}{\|v\|_{H^1(\Omega)}} \leq \|g\|_{\VV'},
 $$
in the case that $\{w \in H^1(\Omega)\colon w|_{\Gamma_D}=0\} \subset \VV_1$.

Otherwise, when $\VV_1=H^1(\Omega)/\R$, we take $\bar{v}=v-\frac{\int_\Omega v \,dx}{\meas(\Omega)} \mathbb{1} \in \VV$.  Then $\|\bar{v}\|_{H^1(\Omega)} \lesssim \|v\|_{H^1(\Omega)}$, $g(v)=g(\bar{v})$ by assumption, and so $\frac{g(v)}{\|v\|_{H^1(\Omega)}}\lesssim \frac{g(\bar{v})}{\|\bar{v}\|_{H^1(\Omega)}}\leq \|g\|_{\VV'}$.
\end{proof}

An easy version of the proof of Proposition~\ref{prop1} shows the following result, which has been used to bound $\|{\bf r}_{11}-{\bf \tilde{r}}_{11}\|$ in the proof of Theorem~\ref{thm2}.

\begin{proposition} \label{prop3}
For a tiling ${\mathcal T} \subset \cO_\Omega$, and $g \in P_m(\tria)$, it holds that
$$
\big\| \langle \Psi^{\UU},g\rangle_{L_2(\Omega)}\big|_{\vee_{\UU} \setminus \Lambda^{\UU}(\tria,k)}\big\| \lesssim 2^{-k} \|g\|_{\UU'} 
$$
{\rm(}uniform in ${\mathcal T}$ and $g${\rm)}.
\end{proposition}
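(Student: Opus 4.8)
The plan is to mimic the proof of Proposition~\ref{prop1}, but in the simpler situation where the functional $g$ carries no boundary part, so that the delicate handling of the surface integral \eqref{11} disappears and only the volume contribution \eqref{10} survives. The one structural fact I would extract first is the following reading of Definition~\ref{partition-to-lambda}: if $\lambda\in\vee_\UU\setminus\Lambda^\UU(\mathcal{T},k)$, then $\cS(\psi^\UU_\lambda)$ avoids all level-$\max(|\lambda|-k,0)$ cells of $t(\mathcal{T})$, and since $t(\mathcal{T})$ contains every ancestor of every tile of $\mathcal{T}$, this forces that wherever $\supp\psi^\UU_\lambda$ meets a tile $\omega\in\mathcal{T}$ one necessarily has $|\lambda|>|\omega|+k$ (in particular all indices with $|\lambda|\le k$, and all level-$0$ indices, lie in $\Lambda^\UU(\mathcal{T},k)$ and never enter the sum). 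On any such $\omega$ the function $g\in\cP_m(\mathcal{T})$ is a single polynomial.

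Next I would record the pointwise size estimate $\|\psi^\UU_\lambda\|_{L_2(\Omega)}\lesssim 2^{-|\lambda|}$ for $|\lambda|>0$, exactly as in the opening of the proof of Proposition~\ref{prop1}: the vanishing moment \eqref{w8} together with Poincar\'e's inequality on the shape-regular, diameter $\eqsim 2^{-|\lambda|}$ support gives $\|\psi^\UU_\lambda\|_{L_2(\Omega)}\lesssim 2^{-|\lambda|}|\psi^\UU_\lambda|_{H^1(\Omega)}\eqsim 2^{-|\lambda|}$, the wavelets close to $\Gamma_D$ for which \eqref{w8} may fail being treated by the same scaling argument.

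Then I would estimate each coefficient by Cauchy--Schwarz, $|\langle\psi^\UU_\lambda,g\rangle_{L_2(\Omega)}|\le\|\psi^\UU_\lambda\|_{L_2(\Omega)}\,\|g\|_{L_2(\supp\psi^\UU_\lambda)}\lesssim 2^{-|\lambda|}\|g\|_{L_2(\supp\psi^\UU_\lambda)}$, split the $L_2$-mass of $g$ over the tiles of $\mathcal{T}$, and sum. For fixed level $|\lambda|=\ell$ the bounded-overlap property \eqref{w2}--\eqref{w3} yields $\sum_{|\lambda|=\ell}\|g\|^2_{L_2(\omega\cap\supp\psi^\UU_\lambda)}\lesssim\|g\|^2_{L_2(\omega)}$, and the level separation $\ell>|\omega|+k$ lets the geometric series in $\ell$ converge, producing the factor $4^{-k}$:
\[
\big\|\langle\Psi^\UU,g\rangle_{L_2(\Omega)}\big|_{\vee_\UU\setminus\Lambda^\UU(\mathcal{T},k)}\big\|^2\lesssim 4^{-k}\sum_{\omega\in\mathcal{T}}4^{-|\omega|}\|g\|^2_{L_2(\omega)}.
\]
This is the analogue of \eqref{10} with the boundary line \eqref{11} simply omitted.

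It remains to bound $\sum_{\omega\in\mathcal{T}}4^{-|\omega|}\|g\|^2_{L_2(\omega)}$ by $\|g\|^2_{\UU'}$, which is precisely the volume part of Lemma~\ref{lem3}, read with $\VV_1$ replaced by $\UU$ and $\vec{g}_2=0$: for each $\omega$ one invokes the local test space $V_\omega\subset H^1_0(\omega)$, selects $v_\omega\in V_\omega$ recovering $\|g|_\omega\|_{L_2(\omega)}$ with $\|v_\omega\|_{L_2(\omega)}=4^{-|\omega|}\|g|_\omega\|_{L_2(\omega)}$, and assembles $v=\sum_\omega v_\omega\in H^1_0(\Omega)\subset\UU$, giving $\sum_\omega 4^{-|\omega|}\|g|_\omega\|^2_{L_2(\omega)}\lesssim g(v)$ while $\|v\|_{H^1(\Omega)}^2\lesssim\sum_\omega 4^{-|\omega|}\|g|_\omega\|^2_{L_2(\omega)}$, whence $\sqrt{\sum_\omega 4^{-|\omega|}\|g|_\omega\|^2_{L_2(\omega)}}\lesssim g(v)/\|v\|_{H^1(\Omega)}\le\|g\|_{\UU'}$; the quotient case $\UU=H^1(\Omega)/\R$ is covered, as in Lemma~\ref{lem3}, by subtracting the mean of $v$ and using $g(\mathbb{1})=0$. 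Combining this with the previous display and taking square roots yields the asserted bound $\lesssim 2^{-k}\|g\|_{\UU'}$, uniformly in $\mathcal{T}$ and $g$. Since this last step is quoted essentially verbatim from Lemma~\ref{lem3} and the two preceding steps are specializations of Proposition~\ref{prop1}, I expect no new analytic obstacle; the one point needing care is the combinatorial bookkeeping that converts the tree condition defining $\Lambda^\UU(\mathcal{T},k)$ into the clean level gap $|\lambda|>|\omega|+k$ while keeping the overlap constants uniform in $\mathcal{T}$.
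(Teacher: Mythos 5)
Your proposal is correct and takes essentially the same route as the paper: the paper's proof of Proposition~\ref{prop3} is exactly the advertised ``easy version'' of the proof of Proposition~\ref{prop1}, i.e.\ the volume estimate \eqref{10} (Cauchy--Schwarz with $\|\psi^{\UU}_\lambda\|_{L_2(\Omega)}\lesssim 2^{-|\lambda|}$, bounded overlap, and the level gap $|\lambda|>|\omega|+k$ encoded in Definition~\ref{partition-to-lambda}) combined with the volume part of Lemma~\ref{lem3}, with the boundary contribution \eqref{11} simply absent. Your treatment of the quotient case and of the near-$\Gamma_D$ wavelets lacking a vanishing moment is at the same level of detail as the paper's own argument, so no genuine gap remains.
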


The statements from the following proposition have been used to bound the terms $\|{\bf r}_{11}-{\bf \tilde{r}}_{11}\|$ (first statement) and $\|{\bf r}_2-{\bf \tilde{r}}_2\|$ (both statements) in the proof of Theorem~\ref{thm2}.

\begin{proposition} \label{prop2}
For a tiling ${\mathcal T} \subset \cO_\Omega$, $g \in P_m(\tria)$, $1 \leq q \leq n$, and $\vec{g} \in P_m(\tria)^n$, it holds that
\begin{align*}
\big\| \langle \Psi^{\PP_q},g\rangle_{L_2(\Omega)}\big|_{\vee_{\PP_q} \setminus \Lambda^{\PP_q}(\tria,k)}\big\| &\lesssim 2^{-k/2} \|g\|_{L_2(\Omega)} \\
\big\| \langle \grad \Psi^{\UU},\vec{g}\rangle_{L_2(\Omega)^n}\big|_{\vee_{\UU} \setminus \Lambda^{\UU}(\tria,k)}\big\| & \lesssim 2^{-k/2} \|\vec{g}\|_{L_2(\Omega)^n}
\end{align*}
{\rm(}uniform in ${\mathcal T}$, $g$, and $\vec{g}${\rm)}.
\end{proposition}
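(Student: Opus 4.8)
The plan is to exploit the level separation forced by $\lambda\notin\Lambda^\ast(\tria,k)$ and to split the neglected indices into an ``interior'' part, where the single vanishing moment \eqref{w8} buys decay, and a lower-dimensional ``skeleton'' part that will turn out to be the rate-determining term. First I would record the geometric fact underlying everything: by Definition~\ref{partition-to-lambda}, $\bigcup_{\{\omega\in t(\tria)\colon|\omega|=\ell\}}\omega$ is exactly the region on which $\tria$ is refined to level $\ge\ell$. Hence $\lambda\in\vee_\ast\setminus\Lambda^\ast(\tria,k)$ forces $|\lambda|>k$ and, since $\supp\psi^\ast_\lambda\subseteq\cS(\psi^\ast_\lambda)$ must avoid that region for $\ell=|\lambda|-k$, every $\omega\in\tria$ with $\meas(\omega\cap\supp\psi^\ast_\lambda)>0$ satisfies $|\omega|\le|\lambda|-k-1$. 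In particular $g$ (resp.\ $\vec g$) is a single polynomial of degree $m$ on each tile met by $\supp\psi^\ast_\lambda$, and every such tile is at least $k+1$ levels coarser than $\psi^\ast_\lambda$.

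Next I would decompose $\vee_\ast\setminus\Lambda^\ast(\tria,k)=I\cup B$, where $I$ collects the $\lambda$ whose support meets a single tile of $\tria$, lies in the interior of $\Omega$, and carries a vanishing moment, while $B$ is the remainder. By \eqref{w2}--\eqref{w3} and \eqref{w8} the indices in $B$ are exactly those whose supports lie within $\lesssim 2^{-|\lambda|}$ of the $(n-1)$-dimensional skeleton formed by the facets of $\tria$, by $\partial\Omega$, and by $\Gamma_D$ (the last accounting for the exceptional non-vanishing-moment wavelets). On $I$ I would gain a factor $2^{-|\lambda|}$: for the first estimate, $\langle g,\psi^{\PP_q}_\lambda\rangle=\langle g-\bar g,\psi^{\PP_q}_\lambda\rangle$ with Poincar\'e gives $|\langle g,\psi^{\PP_q}_\lambda\rangle|\lesssim 2^{-|\lambda|}\|\grad g\|_{L_2(\supp\psi^{\PP_q}_\lambda)}$; for the second, integration by parts (the boundary term vanishes because $\psi^\UU_\lambda\in C(\bar\Omega)$ is zero on the facets of the single interior tile it meets) yields $\langle\grad\psi^\UU_\lambda,\vec g\rangle=-\langle\psi^\UU_\lambda,\divv\vec g\rangle$, and $\|\psi^\UU_\lambda\|_{L_2}\lesssim 2^{-|\lambda|}$ gives $|\langle\grad\psi^\UU_\lambda,\vec g\rangle|\lesssim 2^{-|\lambda|}\|\divv\vec g\|_{L_2(\supp\psi^\UU_\lambda)}$. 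After squaring, using bounded overlap of supports at each level, the inverse inequality $\|\grad g\|_{L_2(\omega)}\lesssim 2^{|\omega|}\|g\|_{L_2(\omega)}$ on the coarse tile, and the geometric sum $\sum_{i\ge|\omega|+k}4^{-i}\eqsim 4^{-|\omega|-k}$, the $I$-contribution is $\lesssim 4^{-k}\|g\|_{L_2(\Omega)}^2$, which is even stronger than required.

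The decisive estimate is for $B$, where no moment is available, so I would use only $|\langle g,\psi^\ast_\lambda\rangle|\le\|g\|_{L_2(\supp\psi^\ast_\lambda)}\|\psi^\ast_\lambda\|_{L_2}\lesssim\|g\|_{L_2(\supp\psi^\ast_\lambda)}$ together with the inverse inequality $\|g\|_{L_\infty(\omega)}\lesssim 2^{|\omega|n/2}\|g\|_{L_2(\omega)}$. The key counting is that, for a fixed $\omega\in\tria$ of level $j$, the number of $\lambda\in B$ of level $i$ meeting $\omega$ is $\lesssim 2^{(i-j)(n-1)}$, because these supports crowd a $2^{-i}$-neighbourhood of $\partial\omega$, whose $(n-1)$-measure is $\eqsim 2^{-j(n-1)}$ by the relation $\meas(F_\omega)\eqsim\meas(\omega)^{(n-1)/n}$ in \eqref{w1}. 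Combining with the $L_\infty$ bound, $\sum_{\lambda\in B,\,|\lambda|=i,\,\text{meeting }\omega}\|g\|_{L_2(\omega\cap\supp\psi^\ast_\lambda)}^2\lesssim 2^{(i-j)(n-1)}2^{-(i-j)n}\|g\|_{L_2(\omega)}^2=2^{-(i-j)}\|g\|_{L_2(\omega)}^2$, and summing over $i\ge j+k+1$ and then over $\omega\in\tria$ gives the $B$-contribution $\lesssim 2^{-k}\|g\|_{L_2(\Omega)}^2$. Adding the two parts and taking a square root produces the claimed $2^{-k/2}$ bound, uniformly in $\tria$ and $g$; the second estimate follows by identical bookkeeping with $g\rightsquigarrow\vec g$ and the extra integration-by-parts step on $I$.

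I expect the main obstacle to be precisely this $B$-counting: quantifying how the straddling, boundary, and non-vanishing-moment wavelets concentrate on an $(n-1)$-dimensional set, and organising the sum so that the gain $2^{(i-j)(n-1)-(i-j)n}=2^{-(i-j)}$ is clean. This is also what caps the exponent at $2^{-k/2}$ rather than the $2^{-k}$ enjoyed by the interior indices and by Proposition~\ref{prop1}, where the $H^1$-normalisation of the test wavelets supplies the missing half-power through the additional factor $\|\psi^{\VV_1}_\lambda\|_{L_2}\lesssim 2^{-|\lambda|}$.
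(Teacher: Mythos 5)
Your proposal is correct and takes essentially the same route as the paper's proof: per tile $\omega$, split the neglected indices of each level $\ell>|\omega|+k$ into interior wavelets carrying a vanishing moment (where the moment -- via Poincar\'{e} in your version, via an $L_1$--$W^1_\infty$ bound plus inverse estimate in the paper's -- yields the stronger $4^{-k}$ contribution, with integration by parts handling the $\grad\Psi^\UU$ case) and the straddling/boundary/no-moment wavelets, whose codimension-one count $\lesssim 2^{(\ell-|\omega|)(n-1)}$ against the per-wavelet factor $2^{-(\ell-|\omega|)n}$ produces the rate-determining $2^{-k}$, exactly as in the paper's sets $\Lambda^{(1)}_{\omega,\ell}$ and $\Lambda^{(2)}_{\omega,\ell}$. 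The remaining differences (bounded-overlap bookkeeping instead of explicit cardinality counts, and your explicit verification that indices outside $\Lambda^\ast(\tria,k)$ only meet tiles at least $k+1$ levels coarser, which the paper uses implicitly) are cosmetic.
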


\begin{proof}  
Since by assumption \eqref{w2}, for $\lambda \in \vee_{{\PP_q}} \setminus \Lambda^{{\PP_q}}(\tria,k)$, $\supp \psi^{{\PP_q}}_\lambda$ has non-empty intersection with a uniformly bounded number of $\omega \in {\mathcal T}$, we have
\begin{equation} \label{18}
\sum_{\lambda \in \vee_{{\PP_q}} \setminus \Lambda^{{\PP_q}}(\tria,k)} \big|\sum_{\omega \in {\mathcal T}} \langle \psi^{{\PP_q}}_\lambda, g\rangle_{L_2(\omega)}\big|^2 
\lesssim
\sum_{\omega \in {\mathcal T}} \sum_{\lambda \in \vee_{{\PP_q}} \setminus \Lambda^{{\PP_q}}(\tria,k)} |\langle \psi_\lambda^{{\PP_q}}, g\rangle_{L_2(\omega)}|^2.
\end{equation}
Given $\omega \in {\mathcal T}$ and $\ell \in \N_0$, we set
\begin{align*}
\Lambda_{\omega,\ell}^{(1)}&=\{\lambda \in \vee_{{\PP_q}}\colon |\lambda|=\ell,\,\supp \psi_\lambda^{{\PP_q}} \subset \omega,\,\psi_\lambda^{{\PP_q}} \text{ has a vanishing moment}\}\\
\Lambda_{\omega,\ell}^{(2)}&=\{\lambda \in \vee_{{\PP_q}}\setminus \Lambda_{\omega,\ell}^{(1)} \colon |\lambda|=\ell,\,\meas(\supp \psi_\lambda^{{\PP_q}} \cap \omega)>0 \}.
\end{align*}

For $\lambda \in \Lambda_{\omega,\ell}^{(2)}$, we estimate
\begin{equation} \label{21}
|\langle \psi^{{\PP_q}}_\lambda, g\rangle_{L_2(\omega)}| \leq \|\psi^{{\PP_q}}_\lambda\|_{L_1(\Omega)} \|g\|_{L_\infty(\omega)} \lesssim 2^{-\ell n /2} 2^{|\omega| n/2}\|g\|_{L_2(\omega)}.
\end{equation}
Using that $\# \Lambda_{\omega,\ell}^{(2)} \lesssim 2^{(\ell-|\omega|)(n-1)}$ (cf. \eqref{w8}) , we infer that
\begin{equation} \label{19}
\sum_{\ell >|\omega|+k} \sum_{\lambda \in \Lambda_{\omega,\ell}^{(2)}} |\langle \psi_\lambda^{{\PP_q}}, g\rangle_{L_2(\omega)}|^2 
\lesssim  2^{-k} \|g\|^2_{L_2(\omega)}.
\end{equation}

Using that for $\lambda \in \Lambda_{\omega,\ell}^{(1)}$, $\psi_\lambda^{{\PP_q}}$ has a vanishing moment, we find  that
\begin{equation} \label{22}
|\langle \psi^{{\PP_q}}_\lambda, g\rangle_{L_2(\omega)}|  \lesssim \|\psi^{{\PP_q}}_\lambda\|_{L_1(\Omega)} 2^{-\ell} \|g\|_{W_\infty^1(\omega)}
\lesssim 2^{-\ell n/2} 2^{-\ell} 2^{|\omega|} 2^{|\omega|n/2} \|g\|_{L_2(\omega)}.
\end{equation}
From $\# \Lambda_{\omega,\ell}^{(2)} \lesssim 2^{(\ell-|\omega|)n}$, we obtain
\begin{equation} \label{20}
\sum_{\ell >|\omega|+k} \sum_{\lambda \in \Lambda_{\omega,\ell}^{(2)}} |\langle \psi_\lambda^{{\PP_q}}, g\rangle_{L_2(\omega)}|^2 
\lesssim  4^{-k} \|g\|^2_{L_2(\omega)}.
\end{equation}
The proof of the first inequality follows from \eqref{18}, \eqref{19}, and \eqref{20}.

To prove the second inequality,  for any $\lambda \in \vee_{\UU}$, similar to \eqref{21} we have
$$
|\langle  \grad \psi^{\UU}_\lambda, \vec{g}\rangle_{L_2(\omega)^n}| \leq \|\psi^{\UU}_\lambda\|_{W^1_1(\Omega)} \|\vec{g}\|_{L_\infty(\omega)^n} \lesssim 2^{-|\lambda| n /2} 2^{|\omega| n/2}\|\vec{g}\|_{L_2(\omega)^n}.
$$
When $\psi^{\UU}_\lambda$ has a vanishing moment and $\supp \psi^{\UU}_\lambda \subset \omega$, we have
\begin{align*}
|\langle \grad \psi^{\UU}_\lambda, \vec{g}\rangle_{L_2(\omega)^n}|
& =
|\langle \psi^{\UU}_\lambda, \divv  \vec{g}\rangle_{L_2(\omega)}|
\lesssim \|\psi^{\UU}_\lambda\|_{L_1(\Omega)} 2^{-|\lambda|} \|\vec{g}\|_{W_\infty^2(\omega)^n}
\\ &\lesssim 2^{-|\lambda| n/2} 4^{-|\lambda|} 4^{|\omega|} 2^{|\omega|/2} \|\vec{g}\|_{L_2(\omega)^n}
\end{align*}
replacing \eqref{22}.
From these two estimates, the second inequality follows similarly to the first one.

\end{proof}

The following proposition has been used to bound the term $\|{\bf r}^{(\frac{1}{2})}_3-{\bf \tilde{r}}^{(\frac{1}{2})}_3\|$
 in the proof of Theorem~\ref{thm2}.
\begin{proposition} \label{prop5}
For a boundary tiling ${\mathcal T}_{\Gamma_D} \subset \cO_{\Gamma_D}$, and $g \in \cP_m({\mathcal T}_{\Gamma_D}) \cap C(\Gamma_D)$,
$$
\|\langle\Psi^{\VV_2},g\rangle_{L_2(\Gamma_D)}|_{\vee_{\VV_2}\setminus \Lambda^{\VV_2}({\mathcal T}_{\Gamma_D},k)}\| \lesssim 2^{-k/2}\|g\|_{H^{\frac{1}{2}}(\Gamma_D)}.
$$
\end{proposition}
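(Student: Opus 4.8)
The plan is to mirror the decay argument of Proposition~\ref{prop2}, but with the $L_2$-control of $g$ replaced by the fractional control furnished by the target $H^{\frac{1}{2}}(\Gamma_D)$-norm, the extra half power of smoothness being absorbed by an inverse estimate on each tile. First I would unravel the meaning of the index set: by the boundary form of Definition~\ref{partition-to-lambda}, an index $\lambda\in\vee_{\VV_2}\setminus\Lambda^{\VV_2}({\mathcal T}_{\Gamma_D},k)$ is one whose neighbourhood $\cS(\psi^{\VV_2}_\lambda)$, and hence $\supp\psi^{\VV_2}_\lambda$, lies in the region where ${\mathcal T}_{\Gamma_D}$ is coarser than level $|\lambda|-k$; thus $\supp\psi^{\VV_2}_\lambda$ is contained in a single tile $\omega\in{\mathcal T}_{\Gamma_D}$ with $|\omega|<|\lambda|-k$, where $g$ restricts to a genuine polynomial. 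On such a support I would exploit the cancellation encoded in \eqref{substitute} together with $\|\psi^{\VV_2}_\lambda\|_{L_2(\Gamma_D)}\eqsim 2^{|\lambda|/2}$ (forced by $\Psi^{\VV_2}$ being a Riesz basis for $H^{-\frac{1}{2}}(\Gamma_D)$): subtracting the mean of $g$ over $\supp\psi^{\VV_2}_\lambda$ and invoking Poincar\'e's inequality at scale $2^{-|\lambda|}$ yields the single-coefficient estimate $|\langle\psi^{\VV_2}_\lambda,g\rangle_{L_2(\Gamma_D)}|\lesssim 2^{-|\lambda|/2}\,|g|_{H^1(\supp\psi^{\VV_2}_\lambda)}$.

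Next I would sum these bounds. Organising the sum over $\vee_{\VV_2}\setminus\Lambda^{\VV_2}({\mathcal T}_{\Gamma_D},k)$ first by the containing tile $\omega$ and then by level $\ell=|\lambda|>|\omega|+k$, the finite-overlap property \eqref{w2}--\eqref{w3} gives $\sum_{|\lambda|=\ell}|g|_{H^1(\supp\psi^{\VV_2}_\lambda)}^2\lesssim|g|_{H^1(\omega)}^2$ at each level, while the geometric series $\sum_{\ell>|\omega|+k}2^{-\ell}\lesssim 2^{-(|\omega|+k)}$ produces the decisive factor $2^{-k}$. This leaves the weighted bound $\sum_{\lambda\notin\Lambda^{\VV_2}({\mathcal T}_{\Gamma_D},k)}|\langle\psi^{\VV_2}_\lambda,g\rangle_{L_2(\Gamma_D)}|^2\lesssim 2^{-k}\sum_{\omega\in{\mathcal T}_{\Gamma_D}}2^{-|\omega|}|g|_{H^1(\omega)}^2$, which has the same tile-by-tile, level-by-level structure already used in Propositions~\ref{prop1} and~\ref{prop2}.

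It then remains to trade the weighted $H^1$-seminorms for the global $H^{\frac{1}{2}}$-norm, and this is where I expect the real work to lie. On each tile $\omega$, $g$ is a polynomial of degree $m$ on a shape-regular patch of diameter $\eqsim 2^{-|\omega|}$, so a scaling argument on the finite-dimensional space $\cP_m(\omega)$ — an inverse estimate going from $H^{\frac{1}{2}}$ to $H^1$ — gives $|g|_{H^1(\omega)}^2\lesssim 2^{|\omega|}|g|_{H^{\frac{1}{2}}(\omega)}^2$, exactly cancelling the weight $2^{-|\omega|}$. Finally the sub-additivity of the Gagliardo seminorm, $\sum_{\omega\in{\mathcal T}_{\Gamma_D}}|g|_{H^{\frac{1}{2}}(\omega)}^2\le|g|_{H^{\frac{1}{2}}(\Gamma_D)}^2\le\|g\|_{H^{\frac{1}{2}}(\Gamma_D)}^2$ (the local double integrals ranging over pairwise disjoint subsets of $\Gamma_D\times\Gamma_D$), closes the estimate at $2^{-k}\|g\|_{H^{\frac{1}{2}}(\Gamma_D)}^2$, whose square root is the assertion. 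The main obstacle is making the cancellation step rigorous using only the dual-norm hypothesis \eqref{substitute} in place of a literal vanishing moment — as the text remarks, \eqref{substitute} is essentially \eqref{w8} for biorthogonal wavelets — together with the careful treatment of the exceptional near-edge wavelets and of the curved geometry of $\Gamma_D$ when invoking both the inverse estimate and the localisation of the Gagliardo seminorm.
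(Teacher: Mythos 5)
Your summation and inverse-inequality steps are exactly the paper's: the finite-overlap counting in the style of \eqref{10} yields $2^{-k}\sum_{\omega\in\tria_{\Gamma_D}}2^{-|\omega|}\|g\|^2_{H^1(\omega)}$, and the scaling estimate $2^{-|\omega|}|g|^2_{H^1(\omega)}\lesssim|g|^2_{H^{1/2}(\omega)}$ together with superadditivity of the localized $H^{1/2}$-norms closes the argument. The genuine gap is your single-coefficient estimate, which is the crux of the proposition. Your route --- subtract the mean of $g$ and apply Poincar\'e --- needs $\int_{\Gamma_D}\psi^{\VV_2}_\lambda\,ds=0$, but the paper assumes only \eqref{w2}, \eqref{w3} and \eqref{substitute} for $\Psi^{\VV_2}$; the vanishing-moment condition \eqref{w8} is deliberately \emph{not} imposed on this basis. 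Moreover, the bound you state, $|\langle\psi^{\VV_2}_\lambda,g\rangle_{L_2(\Gamma_D)}|\lesssim 2^{-|\lambda|/2}|g|_{H^1(\supp\psi^{\VV_2}_\lambda)}$ with the \emph{seminorm} on the right, cannot be a consequence of \eqref{substitute}: testing with $g=\mathbb{1}\in\cP_m(\tria_{\Gamma_D})\cap C(\Gamma_D)$ makes the right-hand side zero while the left-hand side equals $|\int_{\Gamma_D}\psi^{\VV_2}_\lambda\,ds|$, so your estimate would force exactly the vanishing moments you do not have. Nor can the constant mode be absorbed: \eqref{substitute} gives $|c\int_{\Gamma_D}\psi^{\VV_2}_\lambda\,ds|\lesssim 2^{-|\lambda|/2}|c|$, but $|c|\eqsim\meas(\supp\psi^{\VV_2}_\lambda)^{-1/2}\|g\|_{L_2(\supp\psi^{\VV_2}_\lambda)}\eqsim 2^{|\lambda|(n-1)/2}\|g\|_{L_2(\supp\psi^{\VV_2}_\lambda)}$, which grows with the level. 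So the ``main obstacle'' you flag is not a technicality; as formulated, the step fails.

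The repair --- and the paper's actual argument --- is to use no cancellation at all: \eqref{substitute} is a dual-norm bound and is applied directly, giving $|\int_{\Gamma_D}g\,\psi^{\VV_2}_\lambda\,ds|\lesssim\|\psi^{\VV_2}_\lambda\|_{H^{-1}(\Gamma_D)}\,\|g\|_{H^1(\supp\psi^{\VV_2}_\lambda)}\lesssim 2^{-|\lambda|/2}\bigl(\sum_{\omega\in\tria_{\Gamma_D}}\|g\|^2_{H^1(\omega\cap\supp\psi^{\VV_2}_\lambda)}\bigr)^{1/2}$, i.e., with the full local $H^1$-norm rather than the seminorm. The extra $L_2$-contribution this carries is harmless: after the counting step one has $\sum_{\omega}2^{-|\omega|}\|g\|^2_{L_2(\omega)}\leq\|g\|^2_{L_2(\Gamma_D)}\leq\|g\|^2_{H^{1/2}(\Gamma_D)}$, while the seminorm part is handled by your inverse estimate. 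Separately, your claim that $\supp\psi^{\VV_2}_\lambda$ for a dropped index lies inside a \emph{single} coarse tile is false: the definition of $\Lambda^{\VV_2}(\tria_{\Gamma_D},k)$ only ensures that every tile meeting $\cS(\psi^{\VV_2}_\lambda)$ in positive measure has level $<|\lambda|-k$, and the support may straddle an interface between two such tiles. This is why the paper distributes each coefficient over the pieces $\omega\cap\supp\psi^{\VV_2}_\lambda$; your level-wise sums should be organized the same way (this causes no harm to the Poincar\'e idea either, since $g$ is continuous and piecewise polynomial, hence in $H^1$ across interfaces --- but the missing vanishing moment remains fatal to that route).
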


\begin{proof} Using \eqref{substitute}, similar to \eqref{10}
\[
\begin{split}
\sum_{\lambda \in \vee_{\VV_2}\setminus \Lambda^{\VV_2}({\mathcal T}_{\Gamma_D},k)} \Big| \int_{\Gamma_D} g \psi_\lambda^{\VV_2}  \, ds\Big|^2
&\leq 
\sum_{\lambda \in \vee_{\VV_2}\setminus \Lambda^{\VV_2}({\mathcal T}_{\Gamma_D},k)} \sum_{\omega \in {\mathcal T}_{\Gamma_D}} 2^{-|\lambda|} \|g\|^2_{H^1(\omega \cap \supp \psi^{\VV_2}_\lambda)}\\
&\lesssim 2^{-k} \sum_{\omega \in {\mathcal T}_{\Gamma_D}} 2^{-|\omega|} \|g\|_{H^1(\omega)}^2 \lesssim 2^{-k} \|g\|^2_{H^{\frac{1}{2}}(\Gamma_D)},
\end{split}
\]
by an application of an inverse inequality.
\end{proof}

The following proposition has been used to bound the term $\|{\bf r}_3-{\bf \tilde{r}}_3\|$
 in the proof of Theorem~\ref{thm2}.
\begin{proposition} \label{prop6}
For a boundary tiling ${\mathcal T}_{\Gamma_D} \subset \cO_{\Gamma_D}$, and $g \in \cP_m({\mathcal T}_{\Gamma_D})$,
$$
\|\langle\Psi^{\UU},g\rangle_{L_2(\Gamma_D)}|_{\vee_{\UU}\setminus \Lambda^{\UU}({\mathcal T}_{\Gamma_D},k)}\| \lesssim 2^{-k/2}\|g\|_{H^{-\frac{1}{2}}(\Gamma_D)}.
$$
\end{proposition}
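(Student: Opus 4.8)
The plan is to follow the proof of Proposition~\ref{prop5} almost verbatim, but with the roles of the two factors interchanged: here the wavelets $\psi^\UU_\lambda$ are the primal, $H^1(\Omega)$-normalised objects whose traces land in $H^{\frac12}(\Gamma_D)$, while $g$ is now measured in the dual norm $H^{-\frac12}(\Gamma_D)$. First I would establish the pointwise trace bound $\|\psi^\UU_\lambda\|_{L_2(\Gamma_D)}\lesssim 2^{-|\lambda|/2}$ by the same homogeneity argument already used in the proof of Proposition~\ref{prop1} to get $\|\psi^{\VV_1}_\lambda\|_{L_2(\Gamma_N)}\lesssim 2^{-|\lambda|/2}$, relying on $|\psi^\UU_\lambda|_{H^1(\Omega)}\eqsim 1$ and $\|\psi^\UU_\lambda\|_{L_2(\Omega)}\lesssim 2^{-|\lambda|}$. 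A plain Cauchy--Schwarz then gives $|\langle\psi^\UU_\lambda,g\rangle_{L_2(\Gamma_D)}|\lesssim 2^{-|\lambda|/2}\|g\|_{L_2(\Gamma_D\cap\supp\psi^\UU_\lambda)}$.

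Second, I would exploit the defining property of $\Lambda^\UU({\mathcal T}_{\Gamma_D},k)$: if $\lambda\notin\Lambda^\UU({\mathcal T}_{\Gamma_D},k)$ and $\supp\psi^\UU_\lambda$ meets a tile $\omega\in{\mathcal T}_{\Gamma_D}$ in positive $(n-1)$-measure, then necessarily $|\lambda|>|\omega|+k$; otherwise the ancestor of $\omega$ in $\bar{\tria}_{\Gamma_D}$ at level $\max(|\lambda|-k,0)\le|\omega|$ would contain $\omega$ and hence meet $\cS(\psi^\UU_\lambda)$, forcing $\lambda\in\Lambda^\UU({\mathcal T}_{\Gamma_D},k)$. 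Squaring the coefficient bound, summing over $\lambda\notin\Lambda^\UU({\mathcal T}_{\Gamma_D},k)$, reorganising the sum over the tiles $\omega\in{\mathcal T}_{\Gamma_D}$, and using the local finiteness \eqref{w2}--\eqref{w3} (bounded overlap of level-$\ell$ supports, so $\sum_{|\lambda|=\ell}\|g\|_{L_2(\omega\cap\supp\psi^\UU_\lambda)}^2\lesssim\|g\|_{L_2(\omega)}^2$) exactly as in \eqref{10}--\eqref{11}, the geometric sum $\sum_{\ell>|\omega|+k}2^{-\ell}\eqsim 2^{-k}2^{-|\omega|}$ produces
\[
\sum_{\lambda\in\vee_\UU\setminus\Lambda^\UU({\mathcal T}_{\Gamma_D},k)}|\langle\psi^\UU_\lambda,g\rangle_{L_2(\Gamma_D)}|^2\lesssim 2^{-k}\sum_{\omega\in{\mathcal T}_{\Gamma_D}}2^{-|\omega|}\|g\|_{L_2(\omega)}^2.
\]

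It then remains to prove the boundary counterpart of Lemma~\ref{lem3}, namely that for $g\in\cP_m({\mathcal T}_{\Gamma_D})$,
\[
\sum_{\omega\in{\mathcal T}_{\Gamma_D}}2^{-|\omega|}\|g\|_{L_2(\omega)}^2\lesssim\|g\|_{H^{-\frac12}(\Gamma_D)}^2,
\]
and this is the step I expect to be the main obstacle. I would attack it by duality: on each $\omega$ pick a bump $v_\omega$ supported in $\omega$ with $\langle g,v_\omega\rangle_{L_2(\omega)}\gtrsim\|g\|_{L_2(\omega)}\|v_\omega\|_{L_2(\omega)}$ (polynomial duality on a shape-regular cell) and normalise $\|v_\omega\|_{L_2(\omega)}=2^{-|\omega|}\|g\|_{L_2(\omega)}$, so that $\langle g,\sum_\omega v_\omega\rangle\gtrsim\sum_\omega 2^{-|\omega|}\|g\|_{L_2(\omega)}^2$. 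The difficulty is to control $\|\sum_\omega v_\omega\|_{H^{\frac12}(\Gamma_D)}$: in Lemma~\ref{lem3} the integer-order $H^1$-energy splits \emph{exactly} over the disjoint supports because $v_\omega\in H^1_0(\omega)$, whereas the fractional norm $H^{\frac12}$ is nonlocal, so the clean identity $\|\sum_\omega v_\omega\|^2=\sum_\omega\|v_\omega\|^2$ fails and the cross-scale interactions between neighbouring tiles of very different levels in a graded tiling must be absorbed. I would handle this either through an inverse inequality $\|v_\omega\|_{H^{\frac12}(\Gamma_D)}\lesssim 2^{|\omega|/2}\|v_\omega\|_{L_2(\omega)}$ combined with a frame-type superposition estimate $\|\sum_\omega v_\omega\|_{H^{\frac12}(\Gamma_D)}^2\lesssim\sum_\omega\|v_\omega\|_{H^{\frac12}(\Gamma_D)}^2$ for localized bumps, or via a Besov/interpolation characterisation of $H^{\frac12}(\Gamma_D)$ adapted to the tiling; either way one obtains $\|\sum_\omega v_\omega\|_{H^{\frac12}(\Gamma_D)}^2\lesssim\sum_\omega 2^{-|\omega|}\|g\|_{L_2(\omega)}^2$, and dividing through gives the stated bound. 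Combining the two displays and taking square roots completes the proof, with the decay rate $2^{-k/2}$ coming, as in Proposition~\ref{prop2} and Proposition~\ref{prop5}, from the half-power of the $2^{-k}$ gain.
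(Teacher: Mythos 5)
Your proposal follows essentially the same route as the paper's own proof: the trace bound $\|\psi^\UU_\lambda\|_{L_2(\Gamma_D)}\lesssim 2^{-|\lambda|/2}$ obtained by the homogeneity argument from Proposition~\ref{prop1}, the Cauchy--Schwarz plus level-counting summation exactly as in \eqref{10}, and finally the reduction to $\sum_{\omega\in{\mathcal T}_{\Gamma_D}}2^{-|\omega|}\|g\|^2_{L_2(\omega)}\lesssim\|g\|^2_{H^{-\frac{1}{2}}(\Gamma_D)}$, which the paper handles by invoking ``analogous arguments'' to the first paragraph of the proof of Lemma~\ref{lem3} — precisely the duality-with-local-bumps argument you carry out. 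Your discussion of the nonlocality of the $H^{\frac{1}{2}}$-norm (requiring bumps vanishing at tile interfaces, so that an $H^{\frac{1}{2}}_{00}$-type superposition or Slobodeckij-seminorm estimate controls the cross terms) correctly identifies, and adequately resolves, the one technical detail that the paper's citation of Lemma~\ref{lem3} leaves implicit.
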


\begin{proof} In the proof of Proposition~\ref{prop1}, we saw that $\|\Psi_\lambda^{\VV_1}\|_{L_2(\Gamma_N)} \lesssim 2^{-|\lambda|/2}$. The same arguments show that $\|\Psi_\lambda^\UU\|_{L_2(\Gamma_D)} \lesssim 2^{-|\lambda|/2}$. 
Consequently, similar to \eqref{10},
\[
\begin{split}
\sum_{\lambda \in \vee_{\UU}\setminus \Lambda^{\UU}({\mathcal T}_{\Gamma_D},k)} \Big| \int_{\Gamma_D} g \psi_\lambda^{\UU}  \, ds\Big|^2
&\leq 
\sum_{\lambda \in \vee_{\UU}\setminus \Lambda^{\UU}({\mathcal T}_{\Gamma_D},k)} \sum_{\omega \in {\mathcal T}_{\Gamma_D}} 2^{-|\lambda|} \|g\|^2_{L_2(\omega \cap \supp \psi^{\UU}_\lambda)}\\
&\lesssim 2^{-k} \sum_{\omega \in {\mathcal T}_{\Gamma_D}} 2^{-|\omega|} \|g\|_{L_2(\omega)}^2 \lesssim 2^{-k}\|g\|^2_{H^{-\frac{1}{2}}(\Gamma_D)},
\end{split}
\]
where the last inequality follows from analogous arguments as were applied in the first paragraph of the proof of Lemma~\ref{lem3}.
\end{proof}

\end{document}